\documentclass[11pt,reqno]{amsart}
\usepackage{amssymb,amsmath,amsfonts}
\usepackage{mathrsfs}
\usepackage[margin=1in]{geometry}
\usepackage{graphicx}
\usepackage{relsize}
\usepackage{color,xcolor}
\definecolor{cobalt}{RGB}{61,99,181}

\usepackage[colorlinks, citecolor=cobalt, linkcolor=cobalt]{hyperref}

\usepackage{chngcntr}
\usepackage{extarrows}

\allowdisplaybreaks

\newtheorem{thm}{Theorem}[section]

\newtheorem{cor}[thm]{Corollary}
\newtheorem{lem}[thm]{Lemma}

\newtheorem{exam}[thm]{Example}

\newtheorem{prop}[thm]{Proposition}

\numberwithin{equation}{section}

\def\D{\mathbb{D}}

\newcommand{\N}{\mathbb{N}}

\newcommand{\R}{\mathbb{R}}
\newcommand{\C}{\mathbb{C}}

\makeatletter

\newcommand{\Rmnum}[1]{\expandafter\@slowromancap\romannumeral #1@}
\@namedef{subjclassname@2020}{\textup{2020} Mathematics Subject Classification}
\makeatother

\begin{document}

\title[Weighted  composition operators]{Weighted composition operators on the Fock space}
	
	\author[Xianchi Tian]{Xianchi Tian}
	\address{\textsuperscript{1} College of Mathematics and Statistics, Chongqing University, Chongqing 401331, P. R. China}
	\email{202306021073t@stu.cqu.edu.cn}
	
	\date{\today}
	\subjclass[2020]{47B38, 30D15}
	\keywords{Fock space; weighted composition operator; compactness; invertibility; isometry}
	\thanks{}
	
\begin{abstract}
		In this paper, we  discuss the compactness, invertibility and  isometry of the operators on the Fock space formed by multiplying a weighted composition operator  with the adjoint of another weighted composition operator to form either $W_{g, \psi}W_{f, \varphi}^*$ or $W_{g, \psi}^*W_{f, \varphi}$.
	\end{abstract}

\maketitle

\section{Introduction}

 Let $dA(z)$ denote the  Lebesgue area measure on the complex plane $\mathbb{C}$ and  $d\mu(z)=\frac{1}{\pi}e^{-|z|^2}dA(z)$  denote the Gaussian measure on $\mathbb C$. The Fock space $\mathcal F^2$ consists of all entire functions $f$ in $L^2(\mathbb C, d\mu)$. One can show easily that $\mathcal F^2$ is a closed subspace of $L^2(\mathbb C, d\mu)$. Thus, the Fock space $\mathcal F^2$ is a Hilbert space with the inner product:
$$\langle f, g\rangle=\int_{\mathbb C}f(z)\overline{g(z)}d\mu(z).$$
In addition, $\mathcal F^2$  is also  a reproducing kernel Hilbert space and the reproducing kernel at $z\in \mathbb C$ is given by
$$K_z(w)=e^{\overline{z}w},  \ \ \ \  w\in \mathbb C.$$
Clearly, the normalized reproducing kernel at $z$ is given by
$$k_z(w)=\frac{K_z(w)}{\|K_z\|}=e^{\overline{z}w-\frac{|z|^2}{2}},  \ \ \ \ w\in \mathbb C.$$
For more details about the Fock space $\mathcal{F}^2$ and operator theory   on  this space, we refer to the classical book  \cite{ZK}.

Let $f$ and $\varphi$ be  entire functions.   The \emph{maximal weighted composition operator} $W_{f, \varphi}$ on the Fock space $\mathcal F^2$ is defined by $$W_{f, \varphi}h(z)=f(z)h(\varphi(z))$$ for $h\in \mathcal{F}^2$ and $z\in \C$ such that $W_{f, \varphi}h$ belongs to $\mathcal{F}^2$. In particular, when $f=1$, $W_{f, \varphi}$ reduces to the (unweighted) composition operator $C_{\varphi}$. Concerning  the boundedness, compactness, supercyclicity, unitarity, invertibility, Schatten  class and closed-range property of a single weighted composition operator on the Fock space $\mathcal{F}^2$, one can consult \cite {TC},  \cite{HW},  \cite{LT},  \cite{MT1}, \cite{MT2},   \cite{MT3}, \cite{UK} and \cite{Z}.

For the densely defined operator $W_{f,\varphi}$, the domain
of its adjoint can be described as follows.
A function $h\in\mathcal F^2$ belongs to $D(W_{f,\varphi}^*)$
if and only if there exists a constant $C_h>0$ such that
$$
|\langle W_{f,\varphi}u,h\rangle|
\leqslant C_h\|u\|,
\qquad u\in D(W_{f,\varphi}).
$$
For such $h$, the Riesz representation theorem gives a unique
$v\in\mathcal F^2$ satisfying
$$
\langle W_{f,\varphi}u,h\rangle=\langle u,v\rangle,
\qquad u\in D(W_{f,\varphi}),
$$
and we define $W_{f,\varphi}^*h=v$.
Note that the adjoint $W_{f,\varphi}^*$ need not be bounded.

Hai \cite{PVH} studied unbounded weighted composition operators on the Fock space,
characterizing their $\mathcal C$-selfadjointness with respect to weighted
composition conjugations, as well as their Hermitian, normal and cohyponormal
properties. For related results on unbounded weighted composition operators and
generalized weighted composition operators on Fock spaces and related
spaces of entire functions, see \cite{MG-generalized-Fock,HT-multivariable,HR-MittagLeffler}.

The study of the product of a composition operator and the adjoint of another composition operator originated in the literature \cite{JC, CD1}.
The authors established  necessary and sufficient conditions, under various hypotheses on the inducing maps, for the operators $C_{\psi}^*C_{\varphi}$ and $C_{\varphi}C_{\psi}^*$  to be compact on the  Hardy space, where $\varphi$ and $\psi$ are analytic self maps of the open  unit disk $\D$.  They later extended their results  to the case of  weighted Bergman spaces and characterized the compactness of  the products $C_{\psi}^*C_{\varphi}$ and $C_{\varphi}C_{\psi}^*$; see \cite{CD} for more details.

In 2014, Clifford, Le and Wiggins \cite{CL} completely characterized when the operator $C_\varphi C_\psi^*$  on the Hardy space is invertible, isometric and unitary, and when the operator
$C_{\psi}^* C_\varphi$  is isometric and unitary. In 2018 and 2019, Shaabani \cite{SH,SH2} investigated the invertibility, Fredholmness and closed-range property for the products $W_{g, \psi}W_{f, \varphi}^*$ and  $W_{g, \psi}^*W_{f, \varphi}$ on the Hardy space and weighted Bergman spaces.
 Clifford et al. \cite{CM} considered  the question of whether the operator $C_{\varphi}^*C_{\psi}$ can be compact or finite-rank on the Hardy space in the case that $\varphi$ and $\psi$ are inner functions. Recently, the Schatten class membership of the product of a composition operator and the adjoint of another composition operator on the Hardy and Bergman spaces is investigated by Leng and Zhao \cite{LZ}. Although discussing the compactness, unitarity and invertibility of the operators $W_{g,\psi}W_{f, \varphi}^*$ and $W_{g,\psi}^*W_{f, \varphi}$ on the Fock space is quite natural, there has been very little known about  this aspect.

Inspired by the aforementioned work, we investigate the compactness,
invertibility, isometry and unitarity of products formed by a weighted
composition operator and the adjoint of another on the Fock space.
Since Littlewood's subordination principle does not apply to entire
functions, composition operators on this space are not automatically
bounded.

In Section \ref{S3}, we assume that both $W_{f,\varphi}$ and
$W_{g,\psi}$ are bounded and study the products
$W_{g,\psi}W_{f,\varphi}^*$ and $W_{g,\psi}^*W_{f,\varphi}$.
By Theorem \ref{U}, each of these products is unitary if and only if
$|\varphi^{\prime}(z)\psi^{\prime}(z)|=1$ for all $z\in\C$
and
$|f(0)g(0)|e^{\frac{|\varphi(0)|^2+|\psi(0)|^2}{2}}=1.$ In addition, we show in
Theorem \ref{I} that the  invertibility of $W_{g,\psi}W_{f,\varphi}^*$ and $W_{g,\psi}^*W_{f,\varphi}$ is equivalent, and that  is also equivalent to the condition
$|\varphi^{\prime}(z)\psi^{\prime}(z)|=1$ for all $z\in\C$.
Moreover, we conclude in Theorem \ref{M5} that $W_{g,\psi}W_{f,\varphi}^*$ is unitary if and only if it is isometric, and likewise for the operator  $W_{g,\psi}^*W_{f,\varphi}$.
We also obtain necessary conditions and sufficient conditions for
compactness in Propositions \ref{T2} and \ref{T3}.
When $f=g=1$, these results give complete characterizations of
unitarity and compactness for the products
$C_{\psi}C_{\varphi}^*$ and $C_{\psi}^*C_{\varphi}$;
see Corollaries \ref{C1} and \ref{C2}.

In Section \ref{S4}, we allow one factor to be unbounded and study the unique bounded extension of the densely defined natural product $W_{g,\psi}W_{f,\varphi}^{*}$. Under this assumption, Theorem \ref{ext-affine} ensures that both nonconstant inducing maps are affine. In addition, Theorems  \ref{ext-unitary} and \ref{ext-invertible} characterize isometry, unitarity and invertibility, while Propositions \ref{ext-Gaussian-compact} and \ref{ext-finite-zeros} establish compactness criteria. However, we have not obtained analogous results for bounded extensions of the reverse natural product $W_{g,\psi}^{*}W_{f,\varphi}$ under this weaker boundedness assumption. This case is left for further study.

\section{Preliminary}\label{S2}

In this section, we shall include  some definitions and known results which will be frequently used  in the next two sections. The following theorem gives  complete characterizations of the boundedness  and compactness of a weighted composition operator on the Fock space $\mathcal F^2$.

\begin{thm}\label{L1}\emph{(\cite[Theorems 2.2 and 2.4]{LT})}
  Let $f$ and $\varphi$ be entire functions on $\C$ such that $f$ is not identically zero. Then we have:\\
  $(1)$ $W_{f, \varphi}$ is bounded on $\mathcal{F}^2$ if and only if $f$ belongs to $\mathcal{F}^2$, $\varphi(z)=\lambda z+\varphi(0)$ with $|\lambda|\leqslant 1$ and
  $$ M(f, \varphi):=\sup\big\{|f(z)|^2e^{(|\varphi(z)|^2-|z|^2)}: z\in \C\big\}< \infty;$$
 $(2)$ $W_{f, \varphi}$ is compact on $\mathcal{F}^2$ if and only if $\varphi(z)=\varphi(0)+\lambda z$ for some $|\lambda|<1$ and
$$\lim_{|z|\rightarrow \infty} |f(z)|^2e^{|\varphi(z)|^2-|z|^2}=0.$$
 \end{thm}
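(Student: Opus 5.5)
The approach is to test $W_{f,\varphi}$ and its adjoint on reproducing kernels. The basic identity is
$$W_{f,\varphi}^*K_w=\overline{f(w)}K_{\varphi(w)},$$
which holds whenever $W_{f,\varphi}$ is bounded, and in general in the sense that $K_w\in D(W_{f,\varphi}^*)$ for the densely defined maximal operator. Combined with $\|K_z\|^2=e^{|z|^2}$, it gives
$$\|W_{f,\varphi}^*k_w\|^2=|f(w)|^2e^{|\varphi(w)|^2-|w|^2}.$$

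\textbf{Boundedness, necessity.} If $W_{f,\varphi}$ is bounded, then $f=W_{f,\varphi}1\in\mathcal F^2$, and the displayed norm identity bounds $|f(w)|^2e^{|\varphi(w)|^2-|w|^2}$ by $\|W_{f,\varphi}\|^2$, so $M(f,\varphi)<\infty$. To force $\varphi$ to be affine, I would use this inequality:
$$|\varphi(w)|^2\le |w|^2-2\log|f(w)|+C.$$
Since $f\in\mathcal F^2$ has order at most $2$, $\log|f|$ is bounded below by $-C'|w|^{2+\epsilon}$ outside a small exceptional set of disks (a minimum-modulus/Cartan estimate). This yields $|\varphi(w)|=O(|w|^{1+\epsilon})$ on large circles, so $\varphi$ is a polynomial of degree at most one by Liouville/Cauchy estimates.

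\textbf{The main obstacle.} With $\varphi(z)=\lambda z+b$, I must show $|\lambda|\le1$. If $|\lambda|>1$, then $\log|f(w)|\le \tfrac12(|w|^2-|\lambda w+b|^2)+C$, which tends to $-\infty$ like $-c|w|^2$. Hence $|f|e^{c|w|^2}$ is bounded, so the entire function $f$ decays to $0$ at infinity, forcing $f\equiv0$ by Liouville, a contradiction. I expect the delicate step to be the earlier reduction to affine $\varphi$ via the lower bound on $\log|f|$. I would handle it by writing $f=e^{p}\,\Pi$ (Hadamard), applying the subharmonic-mean argument to $\log|f|$ on circles, and using the maximum principle on $\varphi$ over circles that avoid the zeros of $f$.

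\textbf{Boundedness, sufficiency.} For $\varphi(z)=\lambda z+b$ with $|\lambda|\le1$ and $M<\infty$, I would write
$$\|W_{f,\varphi}h\|^2=\frac1\pi\int|f|^2e^{|\varphi|^2-|z|^2}\,|h(\varphi(z))|^2e^{-|\varphi(z)|^2}\,dA(z)\le M\cdot\frac1\pi\int|h(\varphi(z))|^2e^{-|\varphi(z)|^2}\,dA(z).$$
If $\lambda\ne0$, the change of variables gives $|\lambda|^{-2}\|h\|^2$, so the operator is bounded. To avoid losing the factor $|\lambda|^{-2}$, it is cleaner to apply the Berezin/Carleson-measure criterion: the pullback measure $|f|^2d\mu\circ\varphi^{-1}$ is Fock–Carleson exactly when its averages over unit disks are controlled, and $M<\infty$ gives precisely this. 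If $\lambda=0$, the operator is rank one, $h\mapsto h(b)f$, which is bounded because $f\in\mathcal F^2$.

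\textbf{Compactness.} If $W_{f,\varphi}$ is compact, then so is $W_{f,\varphi}^*$, and $k_w\to0$ weakly as $|w|\to\infty$. Hence
$$|f(w)|^2e^{|\varphi(w)|^2-|w|^2}=\|W_{f,\varphi}^*k_w\|^2\to0.$$
Boundedness already gives $|\lambda|\le1$. If $|\lambda|=1$, the quantity $|\varphi(w)|^2-|w|^2$ is $2\,\mathrm{Re}(\bar b\lambda w)+|b|^2$. Then $|f(w)e^{\bar b\lambda w}|\to0$, so by Liouville $f\equiv0$, a contradiction; therefore $|\lambda|<1$. Conversely, under the vanishing condition, I split the defining integral into $|z|\le R$ and $|z|>R$. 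For a weakly null sequence $h_n$, the inner part tends to $0$ by uniform convergence on compacts. The outer part is at most
$$\sup_{|z|>R}|f|^2e^{|\varphi|^2-|z|^2}$$
times the Carleson bound from the sufficiency argument, which is uniformly bounded. Hence $W_{f,\varphi}h_n\to0$ in norm, and the operator is compact.
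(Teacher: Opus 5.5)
The paper does not prove this theorem; it quotes it from \cite{LT}, so I am judging your argument on its own. For part (1), and for part (2) with $\lambda\neq0$, your argument is correct. It departs from the usual route mainly in how $\varphi$ is forced to be affine.

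Your maximum-principle argument needs a \emph{pointwise} bound $\log|f(w)|\geqslant -C|w|^{s}$, with $s<3$, on a whole sequence of circles $|w|=r_m\to\infty$. Merely choosing circles that avoid the zeros of $f$ is not enough. The bound you need is exactly the corollary on good radii in the proof of Hadamard's theorem in \cite[Chapter 5, \S5]{SS}. With it, the Cauchy estimates do kill every Taylor coefficient of order $\geqslant2$.

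The ``subharmonic mean'' you also invoke gives only an averaged bound, so it cannot be fed into the maximum principle. Paired with Parseval instead, it gives a shorter proof. Write $\varphi=\sum_k a_kz^k$ and $f=z^mf_1$ with $f_1(0)\neq0$. Average $2\log|f|+|\varphi|^2-|z|^2\leqslant\log M(f,\varphi)$ over $|z|=r$, and use that the circle mean of $\log|f|$ is at least $\log|f_1(0)|$ for $r\geqslant1$. This gives $\sum_k|a_k|^2r^{2k}\leqslant r^2+C$, which forces $a_k=0$ for $k\geqslant2$ and $|a_1|\leqslant1$ at once. So neither the minimum-modulus theorem nor your separate Liouville step for $|\lambda|>1$ is needed.

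Similarly, the factor $|\lambda|^{-2}$ from your change of variables is just a constant. It already gives $\|W_{f,\varphi}\|^2\leqslant M(f,\varphi)/|\lambda|^2$ and the uniform bound on the outer integral in the compactness proof, so the Carleson-measure detour is unnecessary.

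The one genuine gap is the converse of (2) when $\lambda=0$, i.e.\ $\varphi\equiv b$. There your outer-integral bound becomes $\frac1\pi\int_{|z|>R}|h(b)|^2e^{-|b|^2}\,dA(z)=\infty$ whenever $h(b)\neq0$. This cannot be repaired, because the statement as transcribed is false in this case. The hypothesis then only says $|f(z)|^2e^{-|z|^2}\to0$, and that does not force $f\in\mathcal F^2$.

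For example, take $f(z)=\sum_{n\geqslant1}z^n/\sqrt{n\cdot n!}$. Then $\|f\|^2=\sum_{n\geqslant1}1/n=\infty$, while $|f(z)|e^{-|z|^2/2}\to0$. To see the latter, note the coefficients are positive, so it suffices to check the positive axis. There $f(r)e^{-r^2/2}=\sum_{n\geqslant1} n^{-1/2}\sqrt{p_n}$ with $p_n$ the Poisson$(r^2)$ weights, which is $O(r^{-1/2})$.

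For this $f$, the map $h\mapsto h(b)f$ does not even send $\mathcal F^2$ into itself. So $W_{f,\varphi}$ is not compact, although both conditions in (2) hold. The case $\lambda=0$ needs the extra hypothesis $f\in\mathcal F^2$, as in part (1). With it, $W_{f,\varphi}=f\otimes K_b$ has rank one and is trivially compact. For $\lambda\neq0$ no such hypothesis is needed, since your change of variables with $h=1$ gives $\|f\|^2\leqslant M(f,\varphi)/|\lambda|^2$.
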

 
 We also recall the following lemma, which gives the explicit form of the weight when $|\varphi'(z)|=1$.
 \begin{lem}\label{|a|=1}\emph{(\cite[Proposition 2.1]{LT})}
Let $f$ and $\varphi$ be two entire functions on $\mathbb{C}$
such that $f$ is not identically zero. Suppose that there is
a positive constant $M$ such that
$$|f(z)|^2 e^{|\varphi(z)|^2-|z|^2}
\leqslant M\quad \text{for all } z\in\mathbb{C}.$$
Then $\varphi(z)=\varphi(0)+\lambda z$ for some $|\lambda|\leqslant 1$.
If $|\lambda|=1$, then $f(z)=f(0)e^{-\overline{\beta}z}$,
where $\beta=\overline{\lambda}\varphi(0)$.
\end{lem}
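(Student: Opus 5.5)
We need to prove Lemma \ref{|a|=1}: given $|f(z)|^2 e^{|\varphi(z)|^2-|z|^2}\leqslant M$ for all $z$, show that $\varphi$ is affine with $|\lambda|\leqslant 1$, and that $f(z)=f(0)e^{-\overline\beta z}$ with $\beta=\overline\lambda\varphi(0)$ when $|\lambda|=1$.

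The plan is to pass to subharmonic and entire-function estimates. Since $f\not\equiv 0$, pick $z_0$ with $f(z_0)\neq 0$. Using the logarithm, the hypothesis reads
$$|\varphi(z)|^2 \leqslant |z|^2 + \log M - 2\log|f(z)|.$$
To control the term $-\log|f|$ I would use a mean-value argument. The function $|\varphi|^2$ is subharmonic and $\log|f|$ is subharmonic, so averaging over the circle $|z-z_0|=r$ gives
$$\frac{1}{2\pi}\int_0^{2\pi} |\varphi(z_0+re^{it})|^2\,dt \leqslant |z_0|^2+r^2+\log M-2\log|f(z_0)|.$$
Writing $\varphi(z_0+w)=\sum_n a_n w^n$, the left side equals $\sum_n |a_n|^2 r^{2n}$ by Parseval. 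Letting $r\to\infty$ forces $a_n=0$ for $n\geqslant 2$ and $|a_1|^2\leqslant 1$. Hence $\varphi(z)=\varphi(0)+\lambda z$ with $|\lambda|\leqslant1$.

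Now suppose $|\lambda|=1$. Expanding, with $b=\varphi(0)$,
$$|\varphi(z)|^2-|z|^2 = |b|^2+2\,\mathrm{Re}(\overline{b}\lambda z) = |b|^2+2\,\mathrm{Re}(\beta' z),$$
where we set $\beta'=\overline b\lambda$. The hypothesis then says $|f(z)e^{\beta' z}|^2\leqslant Me^{-|b|^2}$. The entire function $F(z)=f(z)e^{\beta' z}$ is therefore bounded, and by Liouville it is constant, equal to $F(0)=f(0)$. Thus $f(z)=f(0)e^{-\beta' z}$. Since $\overline\beta=\overline{\overline\lambda b}=\lambda\overline b=\beta'$, this is exactly $f(z)=f(0)e^{-\overline\beta z}$ with $\beta=\overline\lambda\varphi(0)$.

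The main obstacle is the first step, namely handling the zeros of $f$, where $-\log|f|$ blows up. The circle-averaging approach avoids this difficulty. It works because subharmonicity gives $\frac{1}{2\pi}\int\log|f(z_0+re^{it})|\,dt\geqslant\log|f(z_0)|>-\infty$, and that lower bound is the only input needed.
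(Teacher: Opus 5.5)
Your proof is correct and complete. The paper does not prove this lemma itself; it quotes it from Le \cite[Proposition 2.1]{LT}, so there is no in-paper argument to compare against. Your route is the standard one: the sub-mean-value inequality $\frac{1}{2\pi}\int_0^{2\pi}\log|f(z_0+re^{it})|\,dt\geqslant\log|f(z_0)|$ on circles centred at a point with $f(z_0)\neq0$, then Parseval to force $\varphi$ affine with $|\lambda|\leqslant1$, then Liouville for $f(z)e^{\overline{\beta}z}$. Centring at $z_0$ neatly avoids any assumption that $f(0)\neq0$. The subharmonicity of $|\varphi|^2$ that you mention is not actually needed, since Parseval gives the circle mean exactly.
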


Let $f$ be an entire function. For any $r>0$, let
 $$M(r)=M_{f}(r)=\sup\{|f(z)|: |z|=r\}.$$
Then the order $\rho$ of $f$ is defined as
 $$\rho=\rho(f):=\limsup_{r\rightarrow \infty}\frac{\log \log M(r)}{\log r}.$$
 It is clear that $0\leqslant \rho \leqslant \infty$.
 When $0 <\rho <\infty$,  the type $\sigma$ of $f$ is defined by
 $$\sigma=\limsup_{r \rightarrow \infty}\frac{\log M(r)}{r^{\rho}}.$$

The next theorem gives a sufficient condition for the weighted composition operator $W_{f,\varphi}$ to be compact on the Fock space, which is useful for us to study the compactness of the products  $W_{g, \psi}W_{f, \varphi}^*$ and  $W_{g, \psi}^*W_{f, \varphi}$.

\begin{thm}\label{L3}\emph{(\cite[Theorem 3.1]{TC})}
  Let  $\varphi(z)=a+\lambda z$ with $|\lambda|<1$. If $f$ has order strictly less than $2$, or if $f$ has order $2$ and type strictly less than $\frac{\beta}{2}$, where $\beta=1-|\lambda|^2$, then the weighted composition operator $W_{f, \varphi}$ is  compact  on $\mathcal{F}^2$.
\end{thm}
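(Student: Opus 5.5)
The plan is to verify the criterion in Theorem \ref{L1}(2) directly. The inducing map $\varphi(z)=a+\lambda z$ with $|\lambda|<1$ already has the required affine form. So it remains to show that
$$\lim_{|z|\to\infty}|f(z)|^2e^{|\varphi(z)|^2-|z|^2}=0.$$
If $f\equiv 0$ the operator is zero and there is nothing to prove, so assume $f\not\equiv 0$.

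\textbf{The exponent.} Expanding gives
$$|\varphi(z)|^2-|z|^2=|a|^2+2\,\mathrm{Re}(\overline{a}\lambda z)-(1-|\lambda|^2)|z|^2\leqslant |a|^2+2|a||z|-\beta|z|^2,$$
with $\beta=1-|\lambda|^2>0$. Hence the weight $e^{|\varphi(z)|^2-|z|^2}$ decays like $e^{-\beta|z|^2+O(|z|)}$. The whole task is to show that $|f(z)|^2$ grows strictly slower than $e^{\beta|z|^2}$, up to a factor $e^{O(|z|)}$.

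\textbf{Case 1: order $\rho<2$.} Fix $\varepsilon>0$ with $\rho+\varepsilon<2$. By the definition of order, $\log\log M(r)\leqslant(\rho+\varepsilon)\log r$ for all large $r$, that is, $|f(z)|\leqslant \exp(|z|^{\rho+\varepsilon})$ for $|z|$ large. Then
$$|f(z)|^2e^{|\varphi(z)|^2-|z|^2}\leqslant \exp\big(2|z|^{\rho+\varepsilon}+2|a||z|+|a|^2-\beta|z|^2\big)\to 0.$$
This holds because the negative quadratic term dominates both $|z|^{\rho+\varepsilon}$ and the linear term.

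\textbf{Case 2: order $2$ and type $\sigma<\beta/2$.} Choose $\sigma'$ with $\sigma<\sigma'<\beta/2$. By the definition of type, $\log M(r)\leqslant \sigma' r^2$ for all large $r$. Hence
$$|f(z)|^2e^{|\varphi(z)|^2-|z|^2}\leqslant \exp\big((2\sigma'-\beta)|z|^2+2|a||z|+|a|^2\big)\to 0,$$
since $2\sigma'-\beta<0$.

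In either case Theorem \ref{L1}(2) gives compactness of $W_{f,\varphi}$. The only point needing care is Case 2: the strict inequality $\sigma<\beta/2$ is exactly what leaves room to choose $\sigma'$. This makes the quadratic coefficient strictly negative, so it absorbs the cross term $2\,\mathrm{Re}(\overline a\lambda z)$ coming from the translation $a$. Everything else is a routine use of the definitions of order and type, which bound $M(r)$ only for large $r$; this is harmless because only the limit at infinity matters.
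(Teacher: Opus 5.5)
Your proof is correct. The paper gives no argument for this statement; it is quoted from Carroll--Gilmore \cite{TC}. Your direct verification of the vanishing criterion in Theorem \ref{L1}(2) is the natural route: you expand $|\varphi(z)|^2-|z|^2=|a|^2+2\,\mathrm{Re}(\overline{a}\lambda z)-\beta|z|^2$ and use the growth bounds that the order and type provide.

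One line is worth adding. Since $e^{|\varphi(z)|^2}\geqslant 1$, your estimates also give $|f(z)|^2e^{-|z|^2}\leqslant Ce^{-\eta|z|^2}$ for some $\eta>0$, so $f\in\mathcal F^2$. This matters in the degenerate case $\lambda=0$. There the vanishing condition alone does not force $f\in\mathcal F^2$, and the criterion as quoted in Theorem \ref{L1}(2) is silent on this point. Once $f\in\mathcal F^2$ is known, $W_{f,\varphi}=f\otimes K_a$ is rank one and hence compact.
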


In order to discuss when the product of a weighted composition operator and the adjoint of another weighted composition operator is unitary and invertible on the Fock space, we need the following  characterizations for the unitarity and invertibility  of a single weighted composition  operator.

\begin{thm}\label{L6}\emph{(\cite[Corollary 1.2]{Z})}
  Let $f$ and $\varphi$ be entire functions on $\C$. Then $W_{f,\varphi}$ is a unitary operator on $\mathcal{F}^2$ if and only if there exist constants $a, b, \alpha$ with $|a|=|\alpha|=1$ such that $\varphi(z)= az+b$ and $f(z)= \alpha k_{-\overline{a}b}(z)$.
\end{thm}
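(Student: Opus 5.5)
For sufficiency, assume $\varphi(z)=az+b$ and $f=\alpha k_{-\overline{a}b}$ with $|a|=|\alpha|=1$. I will show that $W_{f,\varphi}$ is an isometry by testing it on reproducing kernels. First I would compute $W_{f,\varphi}K_w(z)=\alpha e^{-a\overline{b}z-|b|^2/2}e^{\overline{w}(az+b)}$. Then I would check that
$$\langle W_{f,\varphi}K_w, W_{f,\varphi}K_u\rangle=e^{\overline{w}u}=\langle K_w,K_u\rangle$$
for all $w,u\in\C$. This is a direct Gaussian computation: the product of the two exponentials is again of kernel type, and $|a|=1$ makes the quadratic terms in $b$ cancel. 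Since finite linear combinations of kernels are dense in $\mathcal F^2$, $W_{f,\varphi}$ extends to an isometry; because it also acts pointwise as a weighted composition, it agrees with that isometry on all of $\mathcal F^2$.

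For surjectivity, I would note that $\varphi^{-1}(z)=\overline{a}(z-b)$ is again affine with unimodular coefficient. The operator $h\mapsto h(\varphi^{-1})/f(\varphi^{-1})$ is then a weighted composition operator of the same form: $1/f(\varphi^{-1}(z))$ is a unimodular constant times a normalized kernel at the appropriate point. By the first step it is also an isometry, and it inverts $W_{f,\varphi}$. Hence $W_{f,\varphi}$ is unitary.

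For necessity, assume $W_{f,\varphi}$ is unitary. Then it is bounded, so by Theorem \ref{L1} we have $\varphi(z)=\lambda z+b$ with $|\lambda|\leqslant 1$, and $f\not\equiv 0$. The standard identity $W_{f,\varphi}^*K_z=\overline{f(z)}K_{\varphi(z)}$ together with $\|W^*_{f,\varphi}K_z\|=\|K_z\|$ (since $W^*_{f,\varphi}$ is unitary) gives
$$|f(z)|^2e^{|\varphi(z)|^2-|z|^2}=1,\qquad z\in\C.$$
In particular $f$ has no zeros, and $\log|f(z)|=\tfrac12(|z|^2-|\lambda z+b|^2)$ is harmonic. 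Its Laplacian equals $2(1-|\lambda|^2)$, so harmonicity forces $|\lambda|=1$. This step is the only place any real thought is needed.

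With $|\lambda|=1$, Lemma \ref{|a|=1} (taking $M=1$) yields $f(z)=f(0)e^{-\overline{\beta}z}$ with $\beta=\overline{\lambda}b$, so $\overline\beta=\lambda\overline b$. Evaluating the identity at $z=0$ gives $|f(0)|=e^{-|b|^2/2}$, so $f(0)=\alpha e^{-|b|^2/2}$ with $|\alpha|=1$. Setting $a=\lambda$, this reads
$$f(z)=\alpha e^{-a\overline{b}z-|b|^2/2}=\alpha k_{-\overline{a}b}(z),$$
which is the claimed form.
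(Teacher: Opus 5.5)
The paper gives no proof of this statement. It quotes Zhao \cite[Corollary 1.2]{Z} and uses the result as a black box, so there is no in-paper argument to match yours against. Your proof is correct and self-contained.

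\textbf{Sufficiency.} The kernel computation checks out: the exponent collapses to $\overline{w}u$ precisely because $|a|^2=1$. The density argument, using that norm limits are pointwise limits, correctly shows that the maximal operator coincides with the isometric extension, so $D(W_{f,\varphi})=\mathcal F^2$. The inverse $h\mapsto h\circ\varphi^{-1}/(f\circ\varphi^{-1})$ is of the same type. Indeed, $1/f(\varphi^{-1}(z))=\overline{\alpha}\,k_b(z)$, and for $\varphi^{-1}(z)=\overline{a}z-\overline{a}b$ the required kernel point is $-a(-\overline{a}b)=b$.

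\textbf{Necessity.} The paper's own arguments (for instance in Theorem \ref{U}) obtain $|\lambda|=1$ from surjectivity via Theorem \ref{L7}. You instead combine $\|W_{f,\varphi}^*K_z\|=\|K_z\|$ with the harmonicity of $\log|f|$. This is more elementary and avoids the invertibility theorem altogether.

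You could also drop Lemma \ref{|a|=1}. Once $|\lambda|=1$, the same identity gives $\log|f(z)|=-\mathrm{Re}(\lambda\overline{b}z)-\frac{|b|^2}{2}$. Hence $f(z)e^{\lambda\overline{b}z}$ is an entire function of constant modulus $e^{-|b|^2/2}$, so it is constant. This yields $f=\alpha k_{-\overline{a}b}$ directly.
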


\begin{thm}\label{L7}\emph{(\cite[Theorem 5.1]{HW})}
   Let $f$ and $\varphi$ be entire functions such that $f$ is not identically zero. Suppose that $W_{f, \varphi}$ is  bounded on $\mathcal{F}^2$, and hence $\varphi(z)=az+b$ with $|a|\leqslant 1$. Then the following statements are equivalent:\\
  \emph{ (i)} $W_{f, \varphi}$ is surjective on $\mathcal{F}^2$;\\
  \emph{ (ii)} $W_{f, \varphi}$ is invertible on $\mathcal{F}^2$;\\
   \emph{(iii)} $\varphi(z)=az+b$ with $|a|= 1$.
\end{thm}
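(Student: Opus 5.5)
The plan is to prove the cycle (iii)$\Rightarrow$(ii)$\Rightarrow$(i)$\Rightarrow$(iii). Throughout, $\varphi(z)=az+b$ with $|a|\leqslant 1$, and by Theorem \ref{L1} the boundedness of $W_{f,\varphi}$ gives $|f(z)|^2e^{|\varphi(z)|^2-|z|^2}\leqslant M$ for all $z\in\C$.

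\emph{(iii)$\Rightarrow$(ii).} Suppose $|a|=1$. Lemma \ref{|a|=1} gives $f(z)=f(0)e^{-\overline{\beta}z}$ with $\beta=\overline{a}b$, so $f(z)=f(0)e^{-a\overline{b}z}$. Since $f\not\equiv 0$, we have $f(0)\neq 0$. On the other hand, $k_{-\overline{a}b}(z)=e^{-a\overline{b}z-|b|^2/2}$. Hence $f=c\,k_{-\overline{a}b}$ with $c=f(0)e^{|b|^2/2}\neq 0$. Write $c=|c|\alpha$ with $|\alpha|=1$. Theorem \ref{L6} then shows that $W_{\alpha k_{-\overline a b},\varphi}$ is unitary. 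Therefore $W_{f,\varphi}=|c|\,W_{\alpha k_{-\overline a b},\varphi}$ is a nonzero multiple of a unitary operator, and so it is invertible.

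\emph{(ii)$\Rightarrow$(i)} is trivial.

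\emph{(i)$\Rightarrow$(iii).} Assume $W_{f,\varphi}$ is surjective and, for contradiction, that $|a|<1$. A bounded surjective operator has an adjoint that is bounded below, so there is $c>0$ with $\|W_{f,\varphi}^*h\|\geqslant c\|h\|$ for all $h\in\mathcal F^2$. The standard reproducing-kernel computation gives
$$W_{f,\varphi}^*K_z=\overline{f(z)}K_{\varphi(z)}.$$
Applying the lower bound to $k_z$ yields
$$|f(z)|\,e^{\frac{|\varphi(z)|^2-|z|^2}{2}}\geqslant c\quad\text{for all } z\in\C.$$
In particular $f$ has no zeros, so $\log|f|$ is harmonic on $\C$. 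Now set
$$u(z)=\tfrac12\bigl(|z|^2-|az+b|^2\bigr)-\log|f(z)|.$$
The lower bound says exactly that $u\leqslant -\log c$ on $\C$, so $u$ is bounded above. Its Laplacian is $\Delta u=2(1-|a|^2)>0$, so $u$ is subharmonic on all of $\C$. A subharmonic function on the whole plane that is bounded above is constant (Liouville's theorem for subharmonic functions). A constant function has $\Delta u=0$, which contradicts $\Delta u>0$. Hence $|a|=1$.

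The only genuinely non-formal step is this last one. The obstacle there is that the upper bound from Theorem \ref{L1} and the lower bound on the kernels do not, on their own, give an integrability contradiction. The subharmonic-Liouville argument is the device that circumvents this. It also covers the degenerate case $a=0$ without separate treatment, although that case can be seen directly: when $a=0$ the range of $W_{f,\varphi}$ is one-dimensional.
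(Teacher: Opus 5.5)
Your proof is correct. The paper itself gives no proof of this statement: it quotes it from \cite[Theorem 5.1]{HW} and uses it as a black box, for instance in Theorems \ref{U} and \ref{I}. So there is no in-paper argument to match yours against.

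Your (iii)$\Rightarrow$(ii) is the natural argument. You combine Lemma \ref{|a|=1} with Theorem \ref{L6} to exhibit $W_{f,\varphi}$ as a nonzero multiple of a unitary operator.

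For (i)$\Rightarrow$(iii), the closest argument in the paper is a step in the proof of Theorem \ref{U}. There the authors treat the dual situation: they show that a bounded-below $W_{f,\varphi}$ has $|a|=1$. They do this by invoking Mengestie's closed-range characterization (Theorem \ref{L8}), and then excluding $a=0$ because $W_{f,\varphi}$ would be rank one.

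Your route is different in three respects:
\begin{enumerate}
\item You pass to the adjoint instead of working with $W_{f,\varphi}$ directly.
\item You use only the kernel estimate $|f(z)|e^{(|\varphi(z)|^2-|z|^2)/2}\geqslant c$; the upper bound from Theorem \ref{L1} is never needed in this direction.
\item You conclude with a potential-theoretic contradiction.
\end{enumerate}
This makes the argument self-contained. It does not depend on the closed-range theorem, and it handles $a=0$ uniformly rather than as a separate case.

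If you prefer not to cite Liouville's theorem for subharmonic functions, there is a more elementary finish. Apply the mean value property of the harmonic function $\log|f|$ on the circle $|z|=r$. Since the average of $|are^{\mathrm{i}\theta}+b|^2$ over $\theta$ is $|a|^2r^2+|b|^2$, this gives $\log|f(0)|\geqslant\log c+\frac{1}{2}\big((1-|a|^2)r^2-|b|^2\big)$. The right-hand side tends to $+\infty$ as $r\to\infty$ when $|a|<1$, which is a contradiction.
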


The following theorem gives a necessary and sufficient condition for a weighted composition operator to have closed range.

\begin{thm}\label{L8}\emph{(\cite[Theorem 1.2]{MT3})}
   Let $f$ and $\varphi$ be entire functions such that $f$ is not identically zero. Assume that $W_{f, \varphi}$ is  bounded on $\mathcal{F}^2$, and hence $\varphi(z)=az+b$ with $|a|\leqslant 1$.  Then $W_{f, \varphi}$ has a closed range if and only if either $a=0$, or $|a|=1$  and $f(z)=f(0)K_{-\overline{a}b}(z)$ with $f(0)\neq 0.$
\end{thm}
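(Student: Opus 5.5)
The plan is to reduce to a normalized situation, settle the two "if" directions by elementary structure, and put the real work into ruling out closed range when $0<|a|<1$.

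\emph{Sufficiency.} If $a=0$, then $W_{f,\varphi}h=h(b)f$ has rank one, so its range is closed. If $|a|=1$ and $f=f(0)K_{-\overline a b}$ with $f(0)\neq0$, note that $K_{-\overline ab}=\|K_{-\overline ab}\|\,k_{-\overline ab}$. Hence $W_{f,\varphi}$ is a nonzero scalar multiple of the operator $W_{k_{-\overline ab},\varphi}$, which is unitary by Theorem~\ref{L6}. Its range is therefore all of $\mathcal F^2$.

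\emph{Necessity when $|a|=1$.} Boundedness of $W_{f,\varphi}$ gives $M(f,\varphi)<\infty$ by Theorem~\ref{L1}, so Lemma~\ref{|a|=1} applies with $\lambda=a$ and $\beta=\overline a b$. It yields $f(z)=f(0)e^{-a\overline b z}=f(0)K_{-\overline ab}(z)$. Also $f(0)\ne0$, since otherwise $f\equiv0$. So in this case the stated form of $f$ is automatic, and only $0<|a|<1$ remains.

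\emph{Normalization for $0<|a|<1$.} I first remove $b$. Right multiplication by a unitary weighted composition operator $U=W_{k_{c},z-b}$ (with $c$ chosen so that Theorem~\ref{L6} applies) preserves closedness of the range. By the rule $W_{f,\varphi}W_{g,\psi}=W_{f\cdot g\circ\varphi,\ \psi\circ\varphi}$, this reduces everything to $\varphi(z)=az$ with a new weight, still called $f$, satisfying
\[
|f(z)|^2\leqslant M e^{\beta|z|^2},\qquad \beta=1-|a|^2 .
\]
Since $f\not\equiv0$ and $\varphi$ is nonconstant, $W=W_{f,az}$ is injective. So closed range is equivalent to $W$ being bounded below, and I aim to contradict $\|Wh\|\geqslant c\|h\|$.

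I test on the orthonormal monomials $h_n=z^n/\sqrt{n!}$. Write $f=\sum_k c_kz^k$. Orthogonality of monomials in $L^2(d\mu)$ gives
\[
\|Wh_n\|^2=\frac{|a|^{2n}}{n!}\sum_{k\ge0}|c_k|^2(n+k)!.
\]
Cauchy's estimates together with the growth bound on $f$ give $|c_k|^2\leqslant M\,\inf_r r^{-2k}e^{\beta r^2}=M(e\beta/k)^k$. Plugging in the crude version $|c_k|^2\lesssim M\beta^k/k!$ only yields $\|Wh_n\|^2\lesssim M|a|^{2n}\sum_k\binom{n+k}{k}\beta^k=M/|a|^2$. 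This is exactly the critical, non-decaying size, and the same happens for the normalized kernels $k_w$, where one gets $\|Wk_w\|^2\leqslant M/|a|^2$.

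\emph{Main obstacle.} The hard step is extracting decay from this critical bound. The crude estimate treats $|f|^2e^{-\beta|z|^2}$ as if it were comparable to a constant on whole circles, which an entire function cannot do. Indeed $\log|f|$ is harmonic while $\beta|z|^2/2$ has positive Laplacian, and the example $f=e^{\beta z^2/2}$ shows the circle means of $|f|^2e^{-\beta r^2}$ decay like $r^{-1}$. The first thing I would try is to work with the circle means $I(r)=\sum_k|c_k|^2r^{2k}$, using that $\log I$ is convex in $\log r$. The aim is to show that $\int I(r)e^{-\beta r^2}\,d\nu_n(r)\to0$ along some subsequence, where $\nu_n$ is the probability measure on $r$ that concentrates near $r\approx\sqrt n/|a|$ and appears when $\|Wh_n\|^2$ is written radially. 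If this fails, I would instead use the fact that $W^*$ must be bounded below on its range, together with $W^*K_z=\overline{f(z)}K_{az}$, and combine kernels $K_{az}$ at nearby points to produce vectors $u$ with $\|W^*u\|/\|u\|\to0$.
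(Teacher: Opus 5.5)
\paragraph{There is a genuine gap at $0<|a|<1$.} Your arguments for sufficiency, for the case $|a|=1$ (via Lemma~\ref{|a|=1}), for the reduction to $\varphi(z)=az$, and for injectivity are fine. The paper gives no proof of its own here; it only cites \cite{MT3}. The gap is the one substantive step: you never show that $W=W_{f,az}$ fails to be bounded below when $0<|a|<1$. You stop at the critical bound $\|Wk_w\|^2\leqslant M/|a|^2$ and list two strategies. (Incidentally, Cauchy's estimate gives only $|c_k|^2\lesssim M\sqrt{k}\,\beta^k/k!$, not your ``crude version''.) I do not think this gap can be closed: the necessity part of the statement as quoted appears to fail for weights with zeros. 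Your heuristic uses harmonicity of $\log|f|$, which breaks down at zeros. Zeros of density $\beta/\pi$ exactly offset the Laplacian $2\beta$ of $\frac{\beta}{2}|z|^2$. They can keep $|f|^2e^{-\beta|z|^2}$ bounded above, and bounded below away from small discs around the zeros.

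\paragraph{Counterexample.} Let $\beta=1-|a|^2$ and $\Lambda=\omega(\mathbb{Z}+\mathrm{i}\mathbb{Z})$ with $\omega^2=\pi/\beta$. Let $\sigma$ be the Weierstrass sigma function of $\Lambda$, with periods $\omega_1=\omega$, $\omega_2=\mathrm{i}\omega$. Quasi-periodicity $\sigma(z+\omega_j)=-e^{\eta_j(z+\omega_j/2)}\sigma(z)$ and Legendre's relation give $\eta_1=\pi/\omega$ and $\eta_2=-\mathrm{i}\pi/\omega$. Hence $F(z)=|\sigma(z)|^2e^{-\beta|z|^2}$ is $\Lambda$-periodic, continuous, and vanishes exactly on $\Lambda$. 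Put $f(z)=\sigma(z-z_0)e^{\beta\overline{z_0}z}$ with $z_0\notin\Lambda$. Then $f(0)\neq0$ and $|f(z)|^2e^{-\beta|z|^2}=e^{\beta|z_0|^2}F(z-z_0)$. The substitution $w=az$ gives
\[
\|W_{f,az}h\|^2=\frac{e^{\beta|z_0|^2}}{|a|^2}\int_{\C}F\Big(\frac{w}{a}-z_0\Big)|h(w)|^2\,d\mu(w).
\]
So $W_{f,az}$ is bounded, and the weight $F(\frac{w}{a}-z_0)$ is $\geqslant c_\epsilon>0$ off the discs $D(\lambda,\epsilon)$, $\lambda\in a(z_0+\Lambda)$.

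To control the discs, write $|h(w)|^2e^{-|w|^2}=e^{|\lambda|^2}|h(w)e^{-\overline{\lambda}w}|^2e^{-|w-\lambda|^2}$. Circle means of the subharmonic function $|he^{-\overline{\lambda}w}|^2$ about $\lambda$ increase with the radius. Hence $\int_{D(\lambda,\epsilon)}|h|^2d\mu$ is at most $\kappa=\epsilon^2e^{R^2}/(R^2-\epsilon^2)$ times the integral over the annulus $\epsilon\leqslant|w-\lambda|<R$, where $R=|a|\omega/2$. These annuli are pairwise disjoint and miss all the discs. Therefore
\[
\|W_{f,az}h\|^2\geqslant c_\epsilon e^{\beta|z_0|^2}|a|^{-2}(1+\kappa)^{-1}\|h\|^2,
\]
so $W_{f,az}$ is bounded below and has closed range although $0<|a|<1$.

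\paragraph{Consequences for your strategies.} In this example $\inf_n\|Wh_n\|>0$, so your first strategy cannot succeed. Your second strategy, as phrased, proves nothing. Here $W^*K_{z_0+\lambda}=0$ for $\lambda\in\Lambda$, so vectors with $\|W^*u\|/\|u\|$ small exist trivially. On $\overline{\operatorname{ran}W}$, however, $W^*$ is bounded below.

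\paragraph{What remains true.} The quoted conclusion does hold for zero-free weights. There $f=e^{p}$ with $\deg p\leqslant2$, and $|f|^2e^{-\beta|z|^2}$ is either $o(1)$, which makes $W$ compact, or a Gaussian in one direction, so $\|Wk_w\|\to0$ as $w$ leaves a strip. For general weights, closed range when $0<|a|<1$ seems to be governed instead by a Luecking-type relative-density condition on the level sets of $|f|^2e^{|\varphi|^2-|z|^2}$.

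The same example with $g=f$ and $\psi=\varphi$ makes $W_{g,\psi}^*W_{f,\varphi}=W^*W$ invertible while $|\varphi'\psi'|=|a|^2<1$. So the uses of Theorem~\ref{L8} in Theorems~\ref{U}, \ref{I} and \ref{M5} need to be revisited as well.
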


Let $W_{f,\varphi}$ and $W_{g,\psi}$ be densely defined maximal weighted composition operators, where $f,g,\varphi,\psi$ are entire functions. In the following, for a maximal weighted composition operator $W_{f, \varphi}$,
 $$D(W_{f, \varphi})=\{h\in\mathcal F^2: f\cdot (h\circ \varphi)\in \mathcal F^2\}$$
 denotes the maximal domain of $W_{f, \varphi}$.
In the rest of this section, we assume that one of $W_{f,\varphi}$ and $W_{g,\psi}$ is bounded and the other one is unbounded. The natural product $T_0=W_{g,\psi}W_{f,\varphi}^*$ is assumed to be densely defined and to satisfy $\|T_0h\|\leqslant M\|h\|$ for $h\in D(T_0)$ and some constant $M>0$, where $$D(T_0)=\big\{h\in D(W_{f,\varphi}^*):W_{f,\varphi}^*h\in D(W_{g,\psi})\big\}$$ denotes the natural domain of $T_0$.
 We write $T=\overline{T_0}$ for its unique bounded extension to $\mathcal F^2$. In Section \ref{S4},  we will study the compactness, invertibility, isometry and unitarity of  this extension.

 Next, let us present the relevant results concerning unbounded weighted composition operators on the Fock space $\mathcal F^2$.

\begin{lem}\label{ext-adjoint-kernel}\emph{(\cite[Lemma 3.1(1)]{PVH})}
Let $W_{\psi,\varphi}$ be a densely defined weighted composition operator induced by two entire functions $\psi,\varphi$. Then, for every $z\in\C$, we have $K_z\in\operatorname{dom}(W_{\psi,\varphi}^*)$, and
$$W_{\psi,\varphi}^*K_z=\overline{\psi(z)}K_{\varphi(z)}.$$
\end{lem}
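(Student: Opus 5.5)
The plan is to check the two requirements in the definition of the adjoint given in the Introduction, with the kernel $K_z$ in the role of $h$. We must show that $u\mapsto\langle W_{\psi,\varphi}u,K_z\rangle$ is bounded on $D(W_{\psi,\varphi})$. We must also identify the Riesz representative of this functional as $\overline{\psi(z)}K_{\varphi(z)}$. The only tool needed is the reproducing property $\langle h,K_w\rangle=h(w)$ for $h\in\mathcal F^2$, together with $\|K_w\|=e^{|w|^2/2}$.

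Fix $z\in\C$ and take any $u\in D(W_{\psi,\varphi})$. By the definition of the maximal domain, $\psi\cdot(u\circ\varphi)$ lies in $\mathcal F^2$. So the reproducing property applies to it, and gives
$$\langle W_{\psi,\varphi}u,K_z\rangle=\psi(z)\,u(\varphi(z)).$$
Since $u\in\mathcal F^2$, we can apply the reproducing property a second time, now at the point $\varphi(z)$:
$$\psi(z)\,u(\varphi(z))=\psi(z)\langle u,K_{\varphi(z)}\rangle=\big\langle u,\overline{\psi(z)}K_{\varphi(z)}\big\rangle.$$
The Cauchy--Schwarz inequality then yields
$$|\langle W_{\psi,\varphi}u,K_z\rangle|\leqslant |\psi(z)|\,e^{|\varphi(z)|^2/2}\,\|u\|.$$
The constant here depends only on $z$, not on $u$. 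Hence $K_z\in D(W_{\psi,\varphi}^*)$.

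It remains to identify $W_{\psi,\varphi}^*K_z$. The identity above says that $\langle W_{\psi,\varphi}u,K_z\rangle=\langle u,v\rangle$ for all $u$ in the domain, where $v=\overline{\psi(z)}K_{\varphi(z)}\in\mathcal F^2$. The vector $v$ in the definition of the adjoint is unique, and this uniqueness is exactly where the density of $D(W_{\psi,\varphi})$ is used: if two vectors $v,v'$ both represent the functional, then $v-v'$ is orthogonal to a dense set, so $v=v'$. Therefore $W_{\psi,\varphi}^*K_z=\overline{\psi(z)}K_{\varphi(z)}$.

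No step is a real obstacle. The one point needing care is that we use only the membership of $W_{\psi,\varphi}u$ in $\mathcal F^2$, which holds because $u$ lies in the maximal domain. We never need $W_{\psi,\varphi}$ to be bounded, so the argument works for unbounded weighted composition operators.
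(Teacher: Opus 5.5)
Your proof is correct. For $u\in D(W_{\psi,\varphi})$, applying the reproducing property twice gives $\langle W_{\psi,\varphi}u,K_z\rangle=\langle u,\overline{\psi(z)}K_{\varphi(z)}\rangle$, which shows the functional is bounded and, by density of the domain, identifies $W_{\psi,\varphi}^*K_z$. The paper does not prove this lemma itself but quotes it from Hai \cite{PVH}, and your argument is the standard reproducing-kernel computation behind it.
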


\begin{lem}\label{ext-maximal-closed}\emph{(\cite[Proposition 3.4]{PVH})}
Every maximal weighted composition operator is closed on the Fock space $\mathcal F^2$.
\end{lem}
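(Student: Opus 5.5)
Since Lemma \ref{ext-maximal-closed} is cited from \cite{PVH}, I will not reprove it from scratch. Instead I plan a short self-contained argument showing that the maximal operator $W_{\psi,\varphi}$ has a closed graph, using only the fact that point evaluations on $\mathcal F^2$ are continuous.

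First I fix the setup. Take a sequence $h_n\in D(W_{\psi,\varphi})$ with $h_n\to h$ in $\mathcal F^2$ and $W_{\psi,\varphi}h_n\to u$ in $\mathcal F^2$. The goal is to show that $h\in D(W_{\psi,\varphi})$ and $W_{\psi,\varphi}h=u$.

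\textbf{Pointwise limits.} By the reproducing property, for every $w\in\C$,
$$|F(w)|=|\langle F,K_w\rangle|\leqslant \|F\|\,e^{|w|^2/2},$$
so norm convergence in $\mathcal F^2$ implies pointwise convergence on $\C$. Applying this with $w=\varphi(z)$ gives $h_n(\varphi(z))\to h(\varphi(z))$ for each $z$. Multiplying by $\psi(z)$, we get $\psi(z)h_n(\varphi(z))\to \psi(z)h(\varphi(z))$. On the other hand, $(W_{\psi,\varphi}h_n)(z)\to u(z)$ by the same pointwise estimate.

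\textbf{Identification of the limit.} By uniqueness of pointwise limits, $\psi\cdot(h\circ\varphi)=u$ on $\C$. Since $u\in\mathcal F^2$, this says exactly that $h$ lies in the maximal domain $D(W_{\psi,\varphi})$, and that $W_{\psi,\varphi}h=u$. Hence the graph is closed.

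There is no real obstacle here. The only point needing care is that the domain is the \emph{maximal} one, so membership requires nothing beyond $\psi\cdot(h\circ\varphi)\in\mathcal F^2$. This is precisely what the identification step delivers, so no separate approximation argument is needed.
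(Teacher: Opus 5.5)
Your proof is correct: the estimate $|F(w)|\leqslant\|F\|e^{|w|^2/2}$ turns norm convergence into pointwise convergence, which forces $u=\psi\cdot(h\circ\varphi)$, and the maximality of the domain then gives $h\in D(W_{\psi,\varphi})$ with $W_{\psi,\varphi}h=u$. The paper gives no proof of this lemma and simply cites \cite[Proposition 3.4]{PVH}; your closed-graph argument via continuity of point evaluations is the standard proof of this fact.
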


\begin{lem}\label{ext-exponential-fock}\emph{(\cite[Lemma 2.1]{PVH})}
  If the function $h$ is entire, then $e^h\in \mathcal F^2$ if and only if $h(z)=\alpha z^2 +\beta z +\gamma$ with $|\alpha|<\frac12$.
\end{lem}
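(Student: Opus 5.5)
We prove that $e^h\in\mathcal F^2$ if and only if $h(z)=\alpha z^2+\beta z+\gamma$ with $|\alpha|<\frac12$. We treat the two directions separately; the only real difficulty is showing that $h$ must be a polynomial.

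\emph{Sufficiency.} Let $h(z)=\alpha z^2+\beta z+\gamma$ with $|\alpha|<\frac12$. Then
$$|e^{h(z)}|^2e^{-|z|^2}=e^{2\operatorname{Re}h(z)-|z|^2}\leqslant |e^{\gamma}|^2e^{(2|\alpha|-1)|z|^2+2|\beta||z|}.$$
Since $2|\alpha|-1<0$, the right-hand side is integrable against $dA$. Hence $e^h\in L^2(\C,d\mu)$, and $e^h$ is entire, so $e^h\in\mathcal F^2$.

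\emph{Necessity, step 1: an exponential growth bound.} Let $F=e^h\in\mathcal F^2$. The reproducing property and Cauchy--Schwarz give
$$|F(z)|=|\langle F,K_z\rangle|\leqslant\|F\|e^{|z|^2/2}.$$
Therefore
$$\operatorname{Re}h(z)\leqslant \tfrac12|z|^2+C \qquad \text{for all } z\in\C.$$

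\emph{Necessity, step 2: $h$ is a polynomial of degree at most two.} We use the Borel--Carathéodory inequality. Put $A(r)=\max_{|z|=r}\operatorname{Re}h(z)$. Then
$$\max_{|z|=r}|h(z)|\leqslant \frac{2r}{R-r}A(R)+\frac{R+r}{R-r}|h(0)|.$$
Take $R=2r$ and use step 1; this gives $|h(z)|=O(|z|^2)$. Cauchy's estimates on large circles then force $h(z)=\alpha z^2+\beta z+\gamma$. Alternatively, one can avoid Borel--Carathéodory. Write $h=\sum a_nz^n$ and compute $\frac{1}{\pi}\int_0^{2\pi}(\tfrac12r^2+C-\operatorname{Re}h(re^{i\theta}))\,d\theta$, using the standard Fourier-coefficient bound $|a_n|r^n\leqslant 2(\sup\operatorname{Re}h-\operatorname{Re}h(0))$. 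This shows $a_n=0$ for $n\geqslant3$.

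\emph{Necessity, step 3: $|\alpha|<\frac12$.} Rotating the variable, we may assume $\alpha=|\alpha|\geqslant 0$. On the real axis $z=x$,
$$|e^{h(x)}|^2e^{-x^2}=e^{(2\alpha-1)x^2+2\operatorname{Re}(\beta)x+c}.$$
We estimate $\int|e^{h}|^2e^{-|z|^2}\,dA$ in polar coordinates, restricting to a thin sector $|\arg z|<\delta$. Writing $z=re^{i\theta}$,
$$2\operatorname{Re}(\alpha z^2)-|z|^2=r^2(2\alpha\cos2\theta-1).$$
If $2\alpha\geqslant 1$, then for small $\theta$ this exponent is at least $-c_\delta r^2$ with $c_\delta\to0$ as $\delta\to0$. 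We must also control the linear term $2\operatorname{Re}(\beta z)$, which could be negative in the sector. The Gaussian integral in the angular variable gives a factor of order $1/r$. When $2\alpha=1$ the integrand is then roughly $e^{-O(r)}/r\cdot r$ after integrating in $\theta$, and this decay cannot be dismissed by a crude estimate.

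This borderline case $|\alpha|=\frac12$ is the main obstacle. We handle it by a change of variables: $e^{\beta z}$ times the Gaussian amounts to a translation. Precisely, the map $F\mapsto F(\cdot-a)k_{a}$ (up to a unimodular factor) is unitary on $\mathcal F^2$, since it is the weighted composition of Theorem \ref{L6}. Applying it with a suitable $a$ removes the linear term, after adjusting $\gamma$. With $\beta=0$ and $\alpha=\frac12$, we have $|e^{h}|^2e^{-|z|^2}=|e^{\gamma}|^2e^{r^2(\cos2\theta-1)}$, and its integral is
$$\int_0^\infty\!\!\int_0^{2\pi}e^{-2r^2\sin^2\theta}\,r\,d\theta\,dr\gtrsim\int_1^\infty r\cdot\frac1r\,dr=\infty.$$
So $e^h\notin\mathcal F^2$ in the borderline case. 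For $|\alpha|>\frac12$ the integrand blows up along the real axis, and divergence is immediate.
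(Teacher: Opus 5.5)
The paper quotes this lemma from \cite{PVH} without proof, so your argument has to stand on its own. Sufficiency is correct. Steps 1 and 2 of necessity are also correct: the bound $\mathrm{Re}\,h(z)\leqslant\frac12|z|^2+\log\|e^h\|$, then Borel--Carath\'eodory with $R=2r$, then Cauchy's estimates.

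\textbf{The gap.} It is in the borderline case $\alpha=\frac12$ of step 3. Your claim that a unitary translation removes the linear term is false. For $F(z)=e^{\frac12 z^2+\beta z+\gamma}$ one gets
$$k_a(z)F(z-a)=\exp\Big\{\tfrac12 z^2+(\beta+\overline{a}-a)z+\text{const}\Big\},$$
because the quadratic term produces $-az$ while $k_a$ contributes $\overline{a}z$. So $\beta$ moves only by $\overline{a}-a=-2\mathrm{i}\,\mathrm{Im}\,a$, which is purely imaginary. The only rotations preserving $\alpha=\frac12$ are $z\mapsto\pm z$, and these merely flip the sign of $\beta$. Hence $\mathrm{Re}\,\beta$ can never be removed, and your computation with $\beta=0$ does not cover cases such as $h(z)=\frac12z^2+z$.

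Your sector analysis stalled for the same reason. You only examined the sector around the positive real axis, where $e^{2\mathrm{Re}(\beta z)}$ may decay. The sector around the negative real axis has the same quadratic behaviour, and there the linear factor grows.

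\textbf{The repair.} No reduction is needed. After rotating so that $\alpha\geqslant0$, write $z=x+\mathrm{i}y$. Then
$$2\mathrm{Re}\,h(z)-|z|^2=(2\alpha-1)x^2+2(\mathrm{Re}\,\beta)x-(2\alpha+1)y^2-2(\mathrm{Im}\,\beta)y+2\mathrm{Re}\,\gamma.$$
By Tonelli, $\|e^h\|^2$ factors as a product of two one-dimensional integrals.
\begin{itemize}
\item The $y$-integral is always finite and positive.
\item The $x$-integral $\int_{\R}e^{(2\alpha-1)x^2+2(\mathrm{Re}\,\beta)x}\,dx$ is finite if and only if $2\alpha-1<0$. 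When $\alpha=\frac12$ it equals $\int_{\R}e^{2(\mathrm{Re}\,\beta)x}\,dx=\infty$ for every value of $\mathrm{Re}\,\beta$.
\end{itemize}
This settles all of step 3 at once, including the borderline case. It also replaces your somewhat loose ``blows up along the real axis'' remark for $|\alpha|>\frac12$: blow-up along a line does not by itself make an area integral diverge, though your sector bound $2\alpha\cos2\theta-1>0$ does.
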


We determine the domain of the natural product $T_0$ in the following lemma.

\begin{lem}\label{ext-domain}
Let $f,g,\varphi,\psi$ be entire functions and
let $W_{f,\varphi}$ and $W_{g,\psi}$ be densely defined
maximal weighted composition operators on $\mathcal F^2$.
Suppose that exactly one of $W_{f,\varphi}$ and $W_{g,\psi}$ is bounded and that
the natural product $T_0=W_{g,\psi}W_{f,\varphi}^*$ is densely defined and admits
a unique bounded extension $T$ to $\mathcal F^2$. Then
$$D(T_0)=D(W_{f,\varphi}^*).$$
In particular, $K_w\in D(T_0)$ for every $w\in\C$, and
\begin{align}\label{ext-kernel}
TK_w(z)=\overline{f(w)}g(z)
e^{\overline{\varphi(w)}\psi(z)},\qquad z,w\in\C.
\end{align}
\end{lem}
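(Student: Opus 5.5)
We need to show $D(T_0)=D(W_{f,\varphi}^*)$, i.e. that $W_{f,\varphi}^*h\in D(W_{g,\psi})$ for every $h\in D(W_{f,\varphi}^*)$. We split according to which factor is bounded.

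\emph{Case 1: $W_{g,\psi}$ is bounded.} Then $D(W_{g,\psi})=\mathcal F^2$, so the condition $W_{f,\varphi}^*h\in D(W_{g,\psi})$ holds automatically and $D(T_0)=D(W_{f,\varphi}^*)$.

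\emph{Case 2: $W_{f,\varphi}$ is bounded and $W_{g,\psi}$ is unbounded.} Then $W_{f,\varphi}^*$ is bounded and everywhere defined, so $D(W_{f,\varphi}^*)=\mathcal F^2$, and we must show $W_{f,\varphi}^*h\in D(W_{g,\psi})$ for every $h\in\mathcal F^2$.
\begin{enumerate}
\item Since $D(T_0)$ is dense, pick $h\in\mathcal F^2$ and $h_n\in D(T_0)$ with $h_n\to h$.
\item By boundedness of $W_{f,\varphi}^*$, $u_n:=W_{f,\varphi}^*h_n\to W_{f,\varphi}^*h$.
\item By the bound $\|T_0h_n-T_0h_m\|\le M\|h_n-h_m\|$, the sequence $W_{g,\psi}u_n=T_0h_n$ is Cauchy, hence convergent.
\item By Lemma \ref{ext-maximal-closed}, $W_{g,\psi}$ is closed, so $W_{f,\varphi}^*h\in D(W_{g,\psi})$ and $W_{g,\psi}W_{f,\varphi}^*h=\lim T_0h_n=Th$.
\end{enumerate}
This closedness argument is the only step with real content, and it is the one to check carefully. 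It also gives $T_0=T$ on $D(T_0)=\mathcal F^2$.

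\emph{The kernel formula.} By Lemma \ref{ext-adjoint-kernel}, $K_w\in D(W_{f,\varphi}^*)=D(T_0)$ and
$$W_{f,\varphi}^*K_w=\overline{f(w)}K_{\varphi(w)}.$$
Since $T$ extends $T_0$ and $K_w\in D(T_0)$,
$$TK_w(z)=T_0K_w(z)=\overline{f(w)}\,g(z)\,K_{\varphi(w)}(\psi(z))=\overline{f(w)}\,g(z)\,e^{\overline{\varphi(w)}\psi(z)}.$$
This uses that the maximal operator acts pointwise, $W_{g,\psi}u=g\cdot(u\circ\psi)$ for $u\in D(W_{g,\psi})$, which gives \eqref{ext-kernel}.
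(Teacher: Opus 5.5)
Your proof is correct and follows the paper's argument essentially step for step. Like the paper, you treat the bounded-$W_{g,\psi}$ case as trivial, use closedness of the maximal operator $W_{g,\psi}$ (Lemma \ref{ext-maximal-closed}) to pass to the limit when $W_{f,\varphi}$ is bounded, and apply Lemma \ref{ext-adjoint-kernel} for the kernel formula. The only cosmetic difference is that you obtain convergence of $T_0h_n$ from the Cauchy bound $\|T_0(h_n-h_m)\|\leqslant M\|h_n-h_m\|$, whereas the paper reads it off directly from $T_0h_n=Th_n\to Th$.
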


\begin{proof}
If $W_{g,\psi}$ is bounded, then $D(W_{g,\psi})=\mathcal F^2$. Hence
\begin{align*}
D(T_0)&=\{h\in D(W_{f,\varphi}^*):W_{f,\varphi}^*h\in D(W_{g,\psi})\}=D(W_{f,\varphi}^*).
\end{align*}

Suppose that $W_{f,\varphi}$ is bounded. Then $W_{f,\varphi}^*$ is bounded and
$D(W_{f,\varphi}^*)=\mathcal F^2$. Since $\overline{D(T_0)}=\mathcal F^2$, for any $h\in\mathcal F^2$,
there exists  a sequence
$\{h_n\}\subseteq D(T_0)$ such that $h_n$ converges to $h$ in
$\mathcal F^2$. Since $W_{f,\varphi}^*$ and $T$ are bounded, we have
\begin{align*}
\lim_{n\rightarrow \infty}W_{f,\varphi}^*h_n=W_{f,\varphi}^*h
\end{align*}
and
\begin{align*}
\lim_{n\rightarrow \infty}W_{g,\psi}W_{f,\varphi}^*h_n
=\lim_{n\rightarrow \infty}T_0h_n=\lim_{n\rightarrow \infty}Th_n=Th.
\end{align*}
 By Lemma \ref{ext-maximal-closed}, $W_{g,\psi}$ is closed.  Since $W_{f,\varphi}^*h_n\in D(W_{g,\psi})$, we have $W_{f,\varphi}^*h\in D(W_{g,\psi})$. Moreover,
$W_{g,\psi}W_{f,\varphi}^*h=Th$. Thus $h\in D(T_0)$, and consequently
$$D(T_0)=\mathcal F^2=D(W_{f,\varphi}^*).$$

Finally, we prove \eqref{ext-kernel}. By Lemma \ref{ext-adjoint-kernel}, for every $w\in\C$,
$$K_w\in D(W_{f,\varphi}^*),\qquad
W_{f,\varphi}^*K_w=\overline{f(w)}K_{\varphi(w)}.$$
The equality $D(T_0)=D(W_{f,\varphi}^*)$ gives $K_w\in D(T_0)$.
Therefore, for all $z,w\in\C$,
\begin{align*}
TK_w(z)&=T_0K_w(z)=(W_{g,\psi}W_{f,\varphi}^*K_w)(z)\\
&=g(z)(W_{f,\varphi}^*K_w)(\psi(z))\\
&=\overline{f(w)}g(z)e^{\overline{\varphi(w)}\psi(z)},
\end{align*}
which proves \eqref{ext-kernel}.
\end{proof}

By \eqref{ext-kernel}, if either $\varphi$ or $\psi$ is constant, then $T$ has rank at most one. Thus $T$ is compact and is neither invertible nor isometric in these cases. In the following, we assume that both $\varphi$ and $\psi$ are nonconstant.

We recall two results on entire functions of finite order.
\begin{lem}\label{zero-summability}\emph{(\cite[Chapter 5, Theorem 2.1]{SS})}
Let $f$ be an entire function of order at most   $\rho$.  If $z_1,z_2,\cdots$ denote the zeros of $f$, with each $z_k\neq 0$, then for every $s>\rho$ we have
$$\sum_{k=1}^\infty\frac{1}{|z_k|^s}<\infty.$$
\end{lem}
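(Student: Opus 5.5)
This is the classical fact that the zeros of a finite-order entire function are summable to any power exceeding the order; the plan is to combine Jensen's formula, which counts zeros, with a dyadic summation. Without loss of generality $f(0)\neq 0$; the hypothesis $z_k\neq 0$ lets us divide by $z^m$ if $f$ vanishes at the origin, and this changes neither the order nor the nonzero zeros. Let $n(r)$ be the number of zeros in $|z|\leqslant r$, counted with multiplicity. Choose $\rho'$ with $\rho<\rho'<s$. Since $f$ has order at most $\rho$, we have $M(r)\leqslant e^{r^{\rho'}}$ for all sufficiently large $r$, i.e. $|f(z)|\leqslant A e^{B|z|^{\rho'}}$ for all $z$.

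\textbf{Step 1 (zero counting).} Jensen's formula gives
$$\int_0^R \frac{n(t)}{t}\,dt=\frac{1}{2\pi}\int_0^{2\pi}\log|f(Re^{i\theta})|\,d\theta-\log|f(0)|.$$
Take $R=2r$ and use monotonicity of $n$:
$$n(r)\log 2\leqslant \int_r^{2r}\frac{n(t)}{t}\,dt\leqslant \log A+B(2r)^{\rho'}-\log|f(0)|.$$
Hence $n(r)\leqslant C r^{\rho'}$ for all $r\geqslant 1$.

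\textbf{Step 2 (dyadic summation).} First, only finitely many zeros satisfy $|z_k|<1$, since they are isolated and the disc is bounded. The remaining zeros lie in the annuli $2^j\leqslant|z_k|<2^{j+1}$, $j\geqslant 0$, and the annulus with index $j$ contains at most $n(2^{j+1})\leqslant C2^{(j+1)\rho'}$ zeros. Each such zero contributes at most $2^{-js}$, so
$$\sum_{|z_k|\geqslant 1}\frac{1}{|z_k|^s}\leqslant \sum_{j\geqslant 0} C2^{(j+1)\rho'}2^{-js}=C2^{\rho'}\sum_{j\geqslant0}2^{-j(s-\rho')}<\infty,$$
because $s>\rho'$. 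Adding back the finitely many zeros in the unit disc gives $\sum_k|z_k|^{-s}<\infty$.

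The only real obstacle is Step 1, the growth bound $n(r)=O(r^{\rho'})$. It relies on Jensen's formula and on the normalization $f(0)\neq0$, which is exactly why the zeros are taken nonzero and a factor $z^m$ is removed first. The summation in Step 2 is routine. Since this is a standard textbook result (\cite{SS}), the paper presumably cites it rather than reproving it.
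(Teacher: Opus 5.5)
Your argument is correct and is essentially the standard proof in the cited source (Stein--Shakarchi, Chapter 5, Theorem 2.1), which the paper quotes without reproving: Jensen's formula with $R=2r$ gives $n(r)=O(r^{\rho'})$, and summing over the dyadic annuli $2^j\leqslant|z_k|<2^{j+1}$ gives convergence. Your intermediate exponent $\rho<\rho'<s$ is the right adjustment for the paper's $\limsup$ definition of order, under which one only has $M(r)\leqslant e^{r^{\rho'}}$ for large $r$ when $\rho'>\rho$.
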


\vspace{0.4mm}
For each integer $k\geqslant 0$, we define \emph{canonical factors} by
$$E_0(z)=1-z\ \ \ \text{and}\ \ \ E_{k}(z)=(1-z)\exp\Big\{z+\frac{z^2}{2}+\cdots+\frac{z^k}{k}\Big\},\ \ \ k\geqslant 1.$$
The integer $k$ is called the degree of the canonical factor. We shall use the following form of Hadamard's factorization theorem.
\begin{thm}\label{hadamard-factorization}\emph{(\cite[Chapter 5, Theorem 5.1]{SS})}
Suppose  $f$ is  entire and has growth order $\rho_0$. Let $k$ be the integer so that $k\leqslant\rho_0<k+1$. If $a_1, a_2, \cdots$ denote the nonzero zeros of $f$,
then
  $$f(z)=e^{P(z)}z^m\prod_{n=1}^\infty E_k\left(\frac{z}{a_n}\right),$$
  where $P$ is a polynomial of degree  not exceeding $k$, and $m$ is the order of the zero of $f$ at $z=0$.
\end{thm}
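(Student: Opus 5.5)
The plan is the classical route: build the canonical product, divide it out, and bound the growth of the zero-free quotient. Fix $s$ with $\rho_0<s<k+1$. Since $f$ has order at most $s$, Lemma \ref{zero-summability} gives $\sum_n |a_n|^{-(k+1)}<\infty$ and also $\sum_n|a_n|^{-s}<\infty$.

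\textbf{Step 1: the canonical product.} I will use the elementary bound $|1-E_k(z)|\leqslant c|z|^{k+1}$ for $|z|\leqslant \frac12$. Together with the convergence of $\sum_n|a_n|^{-(k+1)}$, it shows that $E(z):=z^m\prod_n E_k(z/a_n)$ converges uniformly on compact sets. So $E$ is entire, and its zeros are exactly those of $f$, with multiplicities. Hence $f/E$ is entire and zero-free. Since $\C$ is simply connected, we can write $f/E=e^{P}$ for some entire $P$. It remains to show that $P$ is a polynomial of degree at most $k$.

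\textbf{Step 2: lower bound for $E$ away from the zeros (main obstacle).} I will prove two estimates for the factors:
$$|E_k(z)|\geqslant e^{-c|z|^{k+1}}\ \ \text{for } |z|\leqslant \tfrac12, \qquad |E_k(z)|\geqslant |1-z|\,e^{-c|z|^{k}}\ \ \text{for } |z|\geqslant \tfrac12 .$$
I then split the product over zeros with $|a_n|>2|z|$ and with $|a_n|\leqslant 2|z|$.

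For the first group, $|z/a_n|^{k+1}\leqslant |z|^s|a_n|^{-s}$ because $s<k+1$ and $|z/a_n|<1$. The summability of $|a_n|^{-s}$ then gives a lower bound $e^{-C|z|^s}$.

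For the second group, the exponential part is handled the same way, since $|z/a_n|^k\leqslant 2^{k+1}|z|^s|a_n|^{-s}$ when $k<s$. The remaining factors $\prod|1-z/a_n|$ must be bounded below, and this only works off small neighbourhoods of the zeros. The intervals $\big(|a_n|-|a_n|^{-s},\,|a_n|+|a_n|^{-s}\big)$ have finite total length, so there is a sequence $r_j\to\infty$ (one in each $[R,R+1]$ for large $R$) avoiding all of them. On $|z|=r_j$, each $|1-z/a_n|\geqslant |a_n|^{-s-1}\gtrsim r_j^{-s-1}$. The number of such factors is $O(r_j^{s})$, so their product is at least $e^{-Cr_j^s\log r_j}$. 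After replacing $s$ by a slightly larger exponent $s'$, still less than $k+1$, this gives $|E(z)|\geqslant e^{-C r_j^{s'}}$ on $|z|=r_j$.

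\textbf{Step 3: conclusion.} On $|z|=r_j$ we have $\operatorname{Re}P(z)=\log|f(z)|-\log|E(z)|\leqslant C r_j^{s'}$, using that $f$ has order $\rho_0<s'$. Write $P=\sum b_\ell z^\ell$. The standard coefficient formula for the real part,
$$b_\ell r^\ell=\frac1\pi\int_0^{2\pi}\operatorname{Re}P(re^{i\theta})e^{-i\ell\theta}\,d\theta \qquad (\ell\geqslant1),$$
combined with $\int_0^{2\pi}\operatorname{Re}P(re^{i\theta})\,d\theta=2\pi\operatorname{Re}P(0)$, gives
$$|b_\ell|r_j^\ell\leqslant \frac1\pi\int_0^{2\pi}\big(|\operatorname{Re}P|\big)\,d\theta\leqslant 4\max\{C r_j^{s'},0\}+O(1).$$
Letting $j\to\infty$ forces $b_\ell=0$ for every $\ell>s'$, and in particular for every $\ell\geqslant k+1$. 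Thus $P$ is a polynomial of degree at most $k$, which gives the factorization.
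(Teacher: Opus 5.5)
Your proposal is correct and takes essentially the same route as the proof in Stein--Shakarchi, which the paper cites without reproducing: the canonical product, the two lower bounds for $E_k$, radii $r_j$ avoiding small neighbourhoods of the $|a_n|$, and the coefficient estimate for an entire function whose real part is at most $Cr_j^{s'}$ on those circles. One small wording fix: $\sum_n|a_n|^{-s}<\infty$ follows from Lemma \ref{zero-summability} applied with an order bound $\rho\in(\rho_0,s)$, not from ``$f$ has order at most $s$'', since that bound only gives summability for exponents strictly larger than $s$.
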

The product in  Theorem \ref{hadamard-factorization} is finite when there are finitely many nonzero zeros, and an empty product is understood to be $1$.

We conclude this section with two classical results on entire functions.

\begin{lem}\label{lem:holomorphic}\emph{(\cite[Chapter 5, Proposition 3.2]{SS})}
  Suppose $\{F_n\}_{n=1}^\infty$ is a sequence of holomorphic functions on the open set $\Omega$. If there exist constants $c_n>0$ such that
  $$\sum_{n=1}^\infty c_n<\infty\ \ \ \ \text{and}\ \ \ \ |F_n(z)-1|\leqslant c_n\ \ \ \ (z\in\Omega,\ n\geqslant 1),$$
  then the product $\prod\limits_{n=1}^\infty F_n(z)$ converges uniformly in $\Omega$ to a holomorphic function $F(z)$.
\end{lem}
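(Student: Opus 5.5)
The cleanest approach avoids logarithms: I will show that the partial products form a uniformly Cauchy sequence by bounding each step in a telescoping series. Set
$$P_N(z)=\prod_{n=1}^N F_n(z),\qquad z\in\Omega,\ N\geqslant 1.$$
Each $P_N$ is holomorphic on $\Omega$, being a finite product of holomorphic functions.

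\textbf{Step 1: a uniform bound on the partial products.} Since $|F_n(z)-1|\leqslant c_n$, we have $|F_n(z)|\leqslant 1+c_n\leqslant e^{c_n}$. Hence
$$|P_N(z)|\leqslant \prod_{n=1}^N (1+c_n)\leqslant \exp\Big(\sum_{n=1}^\infty c_n\Big)=:C<\infty$$
for all $z\in\Omega$ and all $N$.

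\textbf{Step 2: telescoping the differences.} For consecutive partial products,
$$|P_{N+1}(z)-P_N(z)|=|P_N(z)|\,|F_{N+1}(z)-1|\leqslant C\,c_{N+1},\qquad z\in\Omega.$$
Write $P_N=P_1+\sum_{j=1}^{N-1}(P_{j+1}-P_j)$. The terms of this series are dominated uniformly on $\Omega$ by the summable constants $Cc_{j+1}$. By the Weierstrass M-test the series converges uniformly on $\Omega$. Therefore $P_N$ converges uniformly on $\Omega$ to some function $F$.

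\textbf{Step 3: holomorphy of the limit.} A uniform limit of holomorphic functions on an open set is holomorphic (Weierstrass' theorem, e.g.\ via Morera's theorem). So $F$ is holomorphic on $\Omega$, and $F=\prod_{n=1}^\infty F_n$ in the sense of uniform convergence of the partial products.

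There is no real obstacle here. The only point needing care is Step 1: the uniform bound on $|P_N|$ must come from $\prod(1+c_n)\leqslant e^{\sum c_n}$, which is what turns the factor-wise hypothesis into summable increments.

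An alternative route takes principal logarithms for $n$ large enough that $c_n\leqslant \tfrac12$. Using $|\log(1+a)|\leqslant 2|a|$ for $|a|\leqslant\tfrac12$, one gets uniform convergence of $\sum\log F_n$ and then of its exponential. The telescoping argument above avoids these branch considerations, so I would use it.
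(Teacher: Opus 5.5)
Your proof is correct. Note that the paper does not prove this lemma itself; it quotes \cite[Chapter 5, Proposition 3.2]{SS}. There the argument goes through the pointwise result for numerical products, which is proved with logarithms, i.e.\ your ``alternative route''. For $n$ large one has $|F_n(z)-1|\leqslant c_n<\frac12$, so $|\log F_n(z)|\leqslant 2c_n$. The partial sums of $\log F_n$ then converge uniformly because the bounds do not depend on $z$, and exponentiating gives uniform convergence. Your telescoping estimate $|P_{N+1}-P_N|\leqslant C c_{N+1}$ with $C=e^{\sum c_n}$ replaces the logarithm bound and avoids branch choices and the separate handling of the first finitely many factors. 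That makes it shorter and fully self-contained for the statement exactly as the paper uses it, namely convergence of $\prod E_2(z/\xi_j)$ on discs. What the logarithmic route buys in the source is extra information the paper does not state: the limit vanishes only where some factor vanishes, and $F'/F=\sum F_n'/F_n$ when no factor vanishes. You can recover the first of these with one more line. Since $\bigl|\prod_{n=M}^{N}F_n-1\bigr|\leqslant\prod_{n=M}^{N}(1+c_n)-1\leqslant e^{\sum_{n\geqslant M}c_n}-1$, and this is less than $1$ for $M$ large, the tail products stay uniformly away from $0$.
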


\begin{lem}\label{lem:canonical-factor-estimate}\emph{(\cite[Chapter 5, Lemma 4.2]{SS})}
  If $|z|\leqslant \frac12,$ then $|1-E_k(z)|\leqslant c|z|^{k+1}$ for some $c>0.$
\end{lem}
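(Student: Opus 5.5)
The plan is the classical argument: write $E_k$ as a single exponential whose exponent is a power-series tail starting at $z^{k+1}$, and then estimate $|1-e^w|$ for small $w$.

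\emph{Step 1 (exponential form of $E_k$).} Fix $|z|\leqslant \frac12$. On the disk $|z|<1$ the principal branch of the logarithm gives
$$\log(1-z)=-\sum_{n=1}^\infty \frac{z^n}{n}.$$
So $1-z=\exp\bigl(-\sum_{n\geqslant 1} z^n/n\bigr)$. Multiplying by $\exp\{z+\frac{z^2}{2}+\cdots+\frac{z^k}{k}\}$ cancels the first $k$ terms of the exponent, and we obtain
$$E_k(z)=e^{w(z)},\qquad w(z)=-\sum_{n=k+1}^\infty \frac{z^n}{n}.$$
For $k=0$ no cancellation occurs, and the same formula holds with the sum starting at $n=1$.

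\emph{Step 2 (bound on the exponent).} Since $|z|\leqslant\frac12$ and $n\geqslant 1$,
$$|w(z)|\leqslant \sum_{n=k+1}^\infty |z|^n=|z|^{k+1}\sum_{j=0}^\infty |z|^j\leqslant 2|z|^{k+1}.$$
In particular $|w(z)|\leqslant 2\cdot 2^{-(k+1)}\leqslant 1$.

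\emph{Step 3 (estimate of $1-e^w$).} For any complex $w$, the power series of the exponential gives
$$|e^w-1|=\Bigl|\sum_{m\geqslant 1}\frac{w^m}{m!}\Bigr|\leqslant |w|\sum_{m\geqslant 1}\frac{|w|^{m-1}}{(m-1)!}=|w|e^{|w|}.$$
With $|w|\leqslant 1$ this yields $|1-E_k(z)|=|1-e^{w}|\leqslant e|w|\leqslant 2e\,|z|^{k+1}$. Hence the lemma holds with $c=2e$, and this constant does not depend on $k$.

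There is no real obstacle. The only point that needs a line of justification is Step 1: the identity $1-z=\exp(\log(1-z))$ with the series for the principal logarithm is valid for $|z|<1$, and the disk $|z|\leqslant\frac12$ lies inside this region.
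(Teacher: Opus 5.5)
Your proof is correct and is essentially the standard argument from the cited source (Stein--Shakarchi), which the paper quotes without proof: write $E_k=e^{w}$ with $w=-\sum_{n\geqslant k+1}z^n/n$, bound $|w|\leqslant 2|z|^{k+1}\leqslant 1$ for $|z|\leqslant\frac12$, and use $|1-e^{w}|\leqslant e|w|$ to get the constant $c=2e$, which is independent of $k$.
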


\section{Products with two bounded factors} \label{S3}
Throughout this section, we assume that both  $W_{g, \psi}$ and $W_{f, \varphi}$ are bounded on the Fock space $\mathcal{F}^2$. Under this assumption, we  discuss the compactness, invertibility and  isometry of  $W_{g, \psi}W_{f, \varphi}^*$ and $W_{g, \psi}^*W_{f, \varphi}$.

Let us begin with the following characterization of when the operators $W_{g, \psi}W_{f, \varphi}^*$ and $W_{g, \psi}^*W_{f, \varphi}$ are unitary on  $\mathcal{F}^2$.
\begin{thm}\label{U}
   Suppose that $f, g,\varphi$ and $\psi$ are  entire functions such that $f(0)g(0)\neq 0$. Assume that $W_{g, \psi}$ and $W_{f, \varphi}$ are bounded on the Fock space $\mathcal{F}^2$. Then the following are equivalent:\\
  $(1)$ $W_{g, \psi}W_{f, \varphi}^*$ is unitary on $\mathcal{F}^2$;\\
   $(2)$ $W_{g, \psi}^*W_{f, \varphi}$ is unitary on $\mathcal{F}^2$;\\
   $(3)$ $|\varphi^{\prime}(z)\psi^{\prime}(z)|=1$ for all $z\in \mathbb C$ and  $|f(0)g(0)|e^{\frac{|\varphi(0)|^2+|\psi(0)|^2}{2}}=1$.
\end{thm}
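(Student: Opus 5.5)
Write $\varphi(z)=az+b$ and $\psi(z)=cz+d$. By Theorem \ref{L1}, the boundedness of $W_{f,\varphi}$ and $W_{g,\psi}$ forces $|a|\leqslant 1$ and $|c|\leqslant 1$. Hence $|\varphi'(z)\psi'(z)|=|ac|$ is constant, and the condition $|ac|=1$ is equivalent to $|a|=|c|=1$. The plan has two parts. First, show that whenever $|a|=|c|=1$, each factor is a scalar multiple of a unitary operator, with an explicit scalar. Second, show that unitarity of either product forces $|a|=|c|=1$. Once both parts are in place, the equivalences reduce to computing the modulus of a scalar.

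\emph{Step 1 (structure when $|a|=|c|=1$).} Boundedness gives $M(f,\varphi)<\infty$, so Lemma \ref{|a|=1} applies with $\lambda=a$ and $\beta=\overline a b$. It yields
$$f(z)=f(0)e^{-a\overline b z}=f(0)K_{-\overline a b}(z)=f(0)e^{|b|^2/2}k_{-\overline a b}(z).$$
By Theorem \ref{L6}, $V_1:=W_{k_{-\overline a b},\varphi}$ is unitary. Therefore
$$W_{f,\varphi}=f(0)e^{|b|^2/2}V_1,\qquad\text{and similarly}\qquad W_{g,\psi}=g(0)e^{|d|^2/2}V_2$$
with $V_2$ unitary. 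Writing $\gamma=\overline{f(0)}g(0)e^{(|b|^2+|d|^2)/2}$, we get
$$W_{g,\psi}W_{f,\varphi}^*=\gamma V_2V_1^* .$$
Likewise $W_{g,\psi}^*W_{f,\varphi}$ is a scalar of the same modulus $|\gamma|$ times the unitary $V_2^*V_1$. A nonzero scalar multiple of a unitary operator is unitary if and only if the scalar has modulus $1$. This gives (3)$\Rightarrow$(1) and (3)$\Rightarrow$(2). It also gives the identity $|f(0)g(0)|e^{\frac{|b|^2+|d|^2}{2}}=1$ in (1)$\Rightarrow$(3) and (2)$\Rightarrow$(3), once $|a|=|c|=1$ is known.

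\emph{Step 2 (unitarity forces $|a|=|c|=1$).} This is the key step, and I would handle it with range arguments.

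Suppose $U=W_{g,\psi}W_{f,\varphi}^*$ is unitary. Then $U$ is surjective, and $\operatorname{ran}U\subseteq \operatorname{ran}W_{g,\psi}$, so $W_{g,\psi}$ is surjective. Theorem \ref{L7} then gives $|c|=1$. Also $U^*=W_{f,\varphi}W_{g,\psi}^*$ is surjective, so $W_{f,\varphi}$ is surjective, and Theorem \ref{L7} gives $|a|=1$.

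For $U'=W_{g,\psi}^*W_{f,\varphi}$ the same reasoning runs on the other side. Surjectivity of $U'$ makes $W_{g,\psi}^*$ surjective. Since $W_{g,\psi}$ is bounded, this means $W_{g,\psi}$ is bounded below, hence injective with closed range. By Theorem \ref{L8}, either $c=0$ or $|c|=1$. The case $c=0$ is impossible: then $W_{g,\psi}h=g\,h(d)$ has rank one, while $\mathcal F^2$ is infinite-dimensional. So $|c|=1$. Applying the same argument to $U'^{*}=W_{f,\varphi}^*W_{g,\psi}$ gives $|a|=1$.

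Combining Steps 1 and 2 yields (1)$\Leftrightarrow$(3)$\Leftrightarrow$(2). The main care point is exactly this Step 2 range argument, especially for (2), where surjectivity of an adjoint must be converted into closed range and injectivity of the bounded operator itself. The hypothesis $f(0)g(0)\neq 0$ is used to ensure that $f$ and $g$ are not identically zero, so that Theorems \ref{L7} and \ref{L8} apply.
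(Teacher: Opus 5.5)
Your proposal is correct and follows essentially the same route as the paper. Both arguments get affine symbols from Theorem~\ref{L1} and use Lemma~\ref{|a|=1} with Theorem~\ref{L6} to write each factor as an explicit scalar times a unitary. For $W_{g,\psi}W_{f,\varphi}^*$ both use surjectivity with Theorem~\ref{L7}, and for $W_{g,\psi}^*W_{f,\varphi}$ both use bounded-belowness, Theorem~\ref{L8}, and the rank-one exclusion.

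The one difference is in how you obtain bounded-belowness. The paper gets it directly from $W_{f,\varphi}^*(W_{g,\psi}W_{g,\psi}^*)W_{f,\varphi}=I$ and positivity, whereas you get it from surjectivity of the adjoint via the closed range theorem. Your ``scalar of modulus one'' argument also makes the normalization step in (1),(2)$\Rightarrow$(3) more explicit than the paper's brief appeal to Theorem~\ref{L6}.
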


   \begin{proof}
    Since $W_{g, \psi}$ and $W_{f, \varphi}$ are both bounded on the Fock space $\mathcal F^2$,  we have by (1) of Theorem  \ref{L1} that $\varphi(z)=az+b$ with $|a|\leqslant 1$ , $\psi(z)=cz+d$ with $|c|\leqslant 1$, and $f, g$ belong to $\mathcal{F}^2$ with $ M(f, \varphi)<\infty$ and $M(g, \psi)< \infty$.

    We first show that $|a|=|c|=1$ provided that (1) or (2) holds.  If  $W_{g, \psi}W_{f, \varphi}^*$ is unitary, then we have that $W_{g, \psi}$ and $W_{f, \varphi}$ are both surjective. It follows from Theorem \ref{L7} that $|a|=|c|=1$.

     Assuming that $W_{g, \psi}^*W_{f, \varphi}$ is unitary,  we need  to show  $|a|=|c|=1$.
Denoting $A=W_{g, \psi}W_{g, \psi}^*$, we have
  $$I=(W_{g, \psi}^*W_{f, \varphi})^*W_{g, \psi}^*W_{f, \varphi}=W_{f, \varphi}^*AW_{f, \varphi},$$
  where $I$ is the identity operator on $\mathcal F^2$.
 Since $A$ is positive, we have
  \begin{align*}
    \begin{split}
      \|A\|\|W_{f, \varphi}h\|^2&\geqslant \langle AW_{f, \varphi}h, W_{f, \varphi}h\rangle \\
      &=\langle W_{f, \varphi}^*AW_{f, \varphi}h, h\rangle \\
      &=\langle h, h\rangle=\|h\|^2,\ \ \ \ h\in \mathcal F^2,
    \end{split}
  \end{align*}
  which yields  $W_{f,\varphi}$ is bounded below on $\mathcal F^2$, and hence it has closed range.
It follows  from Theorem \ref{L8} that either $a=0$ or $|a|=1$. If $a=0$, then we have
\begin{align*}
  \begin{split}
  W_{f, \varphi}h(w)&=f(w)h(\varphi(w))=f(w)h(b)\\
  &=\langle h, K_b\rangle f(w)=(f\otimes K_b)h(w)
  \end{split}
\end{align*}
for any $h\in \mathcal{F}^2$ and $w\in \C$.  This implies that $W_{f, \varphi}=f\otimes K_b$ is a rank-one operator, and so is compact. This contradicts the condition that $W_{f, \varphi}$ is bounded below. Therefore,  we conclude that $|a|=1.$ Similarly, using
$$W_{g, \psi}^*W_{f, \varphi}W_{f, \varphi}^*W_{g, \psi}=I$$
we can also obtain  that $|c|=1$.

     From the above discussion,  we conclude that  $|a|=|c|=1$ if (1) or (2) holds.  It follows from Lemma \ref{|a|=1}  that $$f(w)=f(0)e^{-a\overline{b}w}\ \ \ \ \mathrm{and}\ \ \ \ g(w)=g(0)e^{-c\overline{d}w}.$$ Consequently,
          \begin{align*}
          \begin{split}
           f(z)&=f(0)K_{-\overline{a}b}(z)=f(0)\|K_{-\overline{a}b}\|k_{-\overline{a}b}(z)\\
           &=f(0)e^{\frac{|a\overline{b}|^2}{2}}k_{-\overline{a}b}(z)=f(0)e^{\frac{|b|^2}{2}}k_{-\overline{a}b}(z).
          \end{split}
        \end{align*}
       Similarly $$g(z)=g(0)e^{\frac{|d|^2}{2}}k_{-\overline{c}d}(z).$$
  This yields that  $$W_{f, \varphi}=f(0)e^{\frac{|b|^2}{2}}W_{k_{-\overline{a}b}, \varphi} \ \ \ \ \text{and}\ \ \ \  W_{g, \psi}=g(0)e^{\frac{|d|^2}{2}}W_{k_{-\overline{c}d}, \psi}.$$  Under the assumption that  $W_{g, \psi}W_{f, \varphi}^*$ or $W_{g, \psi}^*W_{f, \varphi}$ is unitary, we have by Theorem \ref{L6} that $$|f(0)g(0)|e^{\frac{|b|^2+|d|^2}{2}}=1,$$ which gives $(3)$.

  Next, we will show that $(3)\Rightarrow (1)$. Suppose that $|ac|=1$ and $|f(0)g(0)|e^{\frac{|b|^2+|d|^2}{2}}=1$. From the proof of $(1)\Rightarrow (3)$ we have that $$W_{f, \varphi}=f(0)e^{\frac{|b|^2}{2}}W_{k_{-\overline{a}b},\varphi} \ \ \ \ \text{and}\ \ \ \  W_{g, \psi}=g(0)e^{\frac{|d|^2}{2}}W_{k_{-\overline{c}d}, \psi},$$ which implies that
   $$W_{g, \psi}W_{f, \varphi}^*=\overline{f(0)}g(0)e^{\frac{|b|^2+|d|^2}{2}}W_{k_{-\overline{c}d}, \psi}W_{k_{-\overline{a}b}, \varphi}^*.$$
   Since Theorem \ref{L6} tells us that $W_{k_{-\overline{a}b}, \varphi}$ and $W_{k_{-\overline{c}d}, \psi}$ are both unitary,  we obtain that $W_{g, \psi}W_{f, \varphi}^*$ is also unitary.

  Finally, we can prove  $(3)\Rightarrow (2)$ by using the same method as the one used in the proof of $(3)\Rightarrow (1)$. This completes the proof of the theorem.
\end{proof}

For unweighted composition operators, we show in the following corollary that the  product   $C_{\psi}C_{\varphi}^*$ (or $C_{\psi}^*C_{\varphi}$) is unitary if and only if  $C_{\varphi}$ and $C_{\psi}$ are.

\begin{cor}\label{C1}
   Suppose   $C_{\varphi}$ and $C_{\psi}$ are bounded composition operators on $\mathcal{F}^2$. Then the following are equivalent:\\
   $(1)$ $C_{\psi}C_{\varphi}^*$ is unitary on $\mathcal{F}^2$;\\
   $(2)$ $C_{\psi}^*C_{\varphi}$ is unitary on $\mathcal{F}^2$;\\
   $(3)$ $C_{\varphi}$ and $C_{\psi}$ are both unitary;\\
   $(4)$  $\varphi(z)=az$ and $\psi(z)=cz$, where $|a|=|c|=1$.
\end{cor}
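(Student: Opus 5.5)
The plan is to specialize Theorem \ref{U} to $f=g=1$ and combine it with Theorem \ref{L6}. Since $C_\varphi$ and $C_\psi$ are bounded, Theorem \ref{L1}(1) with $f=1$ gives $\varphi(z)=az+b$ and $\psi(z)=cz+d$ with $|a|,|c|\leqslant 1$. The hypothesis $f(0)g(0)\neq 0$ of Theorem \ref{U} holds trivially. Hence (1) and (2) are each equivalent to
$$|ac|=1\quad\text{and}\quad e^{\frac{|b|^2+|d|^2}{2}}=1.$$
Because $|a|,|c|\leqslant 1$, the first condition forces $|a|=|c|=1$. The second forces $|b|^2+|d|^2=0$, that is, $b=d=0$. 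Thus (1) $\Leftrightarrow$ (2) $\Leftrightarrow$ (4), and conversely (4) clearly gives condition (3) of Theorem \ref{U}.

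For (3) $\Leftrightarrow$ (4), apply Theorem \ref{L6} to $C_\varphi=W_{1,\varphi}$. It is unitary if and only if $\varphi(z)=az+b$ with $|a|=1$ and $1=\alpha k_{-\overline{a}b}$ for some unimodular $\alpha$. Now $k_{-\overline{a}b}(z)=e^{-a\overline{b}z-\frac{|b|^2}{2}}$. This is constant exactly when $a\overline{b}=0$, i.e.\ $b=0$, and then $\alpha=1$ works. So $C_\varphi$ is unitary if and only if $\varphi(z)=az$ with $|a|=1$, and likewise for $C_\psi$. This gives (3) $\Leftrightarrow$ (4).

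There is no real obstacle here. The only care needed is in extracting $b=d=0$ from the norm condition and $|a|=|c|=1$ from $|ac|=1$, both of which use the bounds $|a|,|c|\leqslant1$ supplied by Theorem \ref{L1}.
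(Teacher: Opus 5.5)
Your proposal is correct and follows essentially the same route as the paper. You specialize Theorem \ref{U} to $f=g=1$ to obtain $|a|=|c|=1$ and $b=d=0$, and you use Theorem \ref{L6} to identify the unitary composition operators as those with $\varphi(z)=az$, $|a|=1$; the only cosmetic difference is that you obtain (4)$\Rightarrow$(1),(2) from the converse direction of Theorem \ref{U}, whereas the paper passes through (3) and the implicit fact that products of unitaries are unitary.
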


\begin{proof}

  Note that $(1)\Leftrightarrow (2)$ follows from Theorem \ref{U}. Suppose $(1)$ or $(2)$ holds. It follows from Theorem $\ref{U}$ and its proof that $|a|=|c|=1$ and $e^{\frac{|b|^2+|d|^2}{2}}=1$, where $a=\varphi^{\prime}(z)$, $c=\psi^{\prime}(z)$, $b=\varphi(0)$ and $d=\psi(0)$. This yields that $b=d=0$,  and so $\varphi(z)=az$ and $\psi(z)=cz$.  By Theorem \ref{L6},  we deduce that $C_{\varphi}$ and $C_{\psi}$ are both unitary. Thus we establish that $(1)\Leftrightarrow (3)$ and $(2)\Leftrightarrow(3)$. Finally, using Theorem \ref{L6} again we obtain that $(3)\Leftrightarrow (4)$. This finishes the proof of the corollary.
\end{proof}

In the next theorem, we establish a necessary and sufficient condition for the operators $W_{g, \psi}W_{f, \varphi}^*$ and $W_{g, \psi}^*W_{f, \varphi}$ to be invertible on $\mathcal F^2$.

\begin{thm}\label{I}
  Suppose that $f, g, \varphi$ and $\psi$ are entire functions such that $f(0)g(0)\neq 0$. Assume that $W_{g, \psi}$ and $W_{f, \varphi}$ are bounded  on the Fock space  $\mathcal{F}^2$. Then the following are equivalent:\\
  $(1)$ $W_{g, \psi}W_{f, \varphi}^*$ is invertible on $\mathcal{F}^2$;\vspace{2mm}\\
   $(2)$ $W_{g, \psi}^*W_{f, \varphi}$ is invertible on $\mathcal{F}^2$;\vspace{2mm}\\
   $(3)$ $|\varphi^{\prime}(z)\psi^{\prime}(z)|=1$ for all $z\in \mathbb C$.
\end{thm}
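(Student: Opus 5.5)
By Theorem \ref{L1}(1), we may write $\varphi(z)=az+b$ and $\psi(z)=cz+d$ with $|a|,|c|\leqslant 1$. Condition (3) is then equivalent to $|a|=|c|=1$. The plan is to prove $(3)\Rightarrow(1),(2)$ directly. For the converses, we show that invertibility of either product forces both $W_{f,\varphi}$ and $W_{g,\psi}$ to be invertible, and then apply Theorem \ref{L7}.

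For $(3)\Rightarrow(1)$ and $(3)\Rightarrow(2)$: if $|a|=|c|=1$, Theorem \ref{L7} shows that $W_{f,\varphi}$ and $W_{g,\psi}$ are invertible on $\mathcal F^2$. Their adjoints are then invertible as well, so both $W_{g,\psi}W_{f,\varphi}^*$ and $W_{g,\psi}^*W_{f,\varphi}$ are products of invertible bounded operators, hence invertible.

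For $(1)\Rightarrow(3)$, suppose $T=W_{g,\psi}W_{f,\varphi}^*$ is invertible. Since $T$ is surjective, $W_{g,\psi}$ is surjective, and Theorem \ref{L7} gives $|c|=1$. Next, $T^*=W_{f,\varphi}W_{g,\psi}^*$ is also invertible, hence surjective, so $W_{f,\varphi}$ is surjective and Theorem \ref{L7} gives $|a|=1$. (Alternatively, $T$ injective implies $W_{f,\varphi}^*$ injective, which means $W_{f,\varphi}$ has dense range; surjectivity of $T^*$ gives the rest.)

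For $(2)\Rightarrow(3)$, suppose $S=W_{g,\psi}^*W_{f,\varphi}$ is invertible. Then $S^*S=W_{f,\varphi}^*AW_{f,\varphi}$, with $A=W_{g,\psi}W_{g,\psi}^*$, is bounded below. Exactly as in the proof of Theorem \ref{U}, this gives $\|A\|\|W_{f,\varphi}h\|^2\geqslant \langle S^*Sh,h\rangle\geqslant \delta\|h\|^2$, so $W_{f,\varphi}$ is bounded below and therefore has closed range. Theorem \ref{L8} then gives $a=0$ or $|a|=1$. The case $a=0$ is excluded because $W_{f,\varphi}=f\otimes K_b$ would be rank one and could not be bounded below on the infinite-dimensional space $\mathcal F^2$. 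Hence $|a|=1$. Applying the same argument to $S^*=W_{f,\varphi}^*W_{g,\psi}$, which is also invertible, gives $|c|=1$. (More simply, $S$ surjective already forces $W_{g,\psi}^*$ to be surjective, so $W_{g,\psi}$ is bounded below; then Theorem \ref{L8} together with the rank-one exclusion applies in the same way.)

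The only real work lies in this last implication: passing from invertibility of the product to bounded-belowness of a single factor, and then using the closed-range characterization of Theorem \ref{L8} while ruling out the degenerate case $a=0$. The hypothesis $f(0)g(0)\neq 0$ guarantees that neither weight vanishes identically, which Theorems \ref{L7} and \ref{L8} require.
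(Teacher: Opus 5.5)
Your proof is correct and follows essentially the same route as the paper. You reduce to $|a|=|c|=1$, use Theorem \ref{L7} for $(3)\Rightarrow(1),(2)$ and for $(1)\Rightarrow(3)$ via surjectivity of the factors, and for $(2)\Rightarrow(3)$ pass to bounded-belowness of $W_{f,\varphi}$ and invoke Theorem \ref{L8} with the rank-one exclusion, exactly as in the proof of Theorem \ref{U}. The only cosmetic difference is that you obtain the condition on the second factor by applying the same argument to the adjoint product, whereas the paper deduces invertibility of the second factor directly from that of the first.
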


\begin{proof}
 Firstly, we  prove that $(1)\Leftrightarrow (3)$.
If $W_{g, \psi}W_{f, \varphi}^*$ is invertible, then there exists an invertible operator $T$ such that $W_{g, \psi}W_{f, \varphi}^*T=I$, where $I$ is the identity operator on $\mathcal F^2$. Thus $W_{g, \psi}$ is surjective. Then Theorem \ref{L7} tells us  that $W_{g,\psi}$ is invertible and so is $W_{f, \varphi}$. Using  Theorem \ref{L7} again, we conclude that $|\varphi^{\prime}(z)\psi^{\prime}(z)|=1$. Thus we get that $(1)\Rightarrow (3)$.

Using the assumption that $|\varphi^{\prime}(z)\psi^{\prime}(z)|=1$ for all $z\in \mathbb C$ and Theorem $\ref{L7}$, we obtain that $W_{f, \varphi}$ and $W_{g, \psi}$ are  both invertible, and so is $W_{g, \psi}W_{f, \varphi}^*$. This gives $(3)\Rightarrow (1)$.

Next we will prove that $(2)\Leftrightarrow (3)$. Suppose $W_{g, \psi}^*W_{f, \varphi}$ is invertible. Then  there exists a constant $\delta>0$ such that
$$\|W_{g, \psi}^*\|\|W_{f, \varphi}h\| \geqslant \|W_{g, \psi}^*W_{f, \varphi}h\|\geqslant \delta \|h\|$$
for any $h\in \mathcal{F}^2$.
It follows that
$$\|W_{f, \varphi}h\| \geqslant \frac{\delta \|h\|}{\|W_{g, \psi}^*\|}, \ \ h\in \mathcal{F}^2,$$
 to get that $W_{f, \varphi}$ is bounded below on $\mathcal F^2$. From the proof of Theorem \ref{U}, we have $|\varphi^{\prime}(z)|=1,$ which implies that $W_{f, \varphi}$ is invertible.  This shows that $W_{g, \psi}^*$ is invertible, and so is $W_{g, \psi}$. Thus we have $|\psi^{\prime}(z)|=1$ and  $(2)\Rightarrow (3)$. The proof of $(3)\Rightarrow (2)$ is similar to that of  $(3)\Rightarrow (1).$ This completes the proof of Theorem \ref{I}.
\end{proof}

Based on Theorems \ref{U} and \ref{I}, we show in the following theorem that the product  $W_{g, \psi}W_{f, \varphi}^*$ (or $W_{g, \psi}^*W_{f, \varphi}$) is unitary if and only if it is an isometry on the Fock space.

\begin{thm}\label{M5}
  Let $f, g$ and $\varphi, \psi$ be entire functions  such that $f(0)g(0)\neq 0$. Assume that $W_{g, \psi}$ and $W_{f, \varphi}$ are bounded weighted composition operators on $\mathcal{F}^2$. Then the following are equivalent:\\
   $(1)$ $W_{g, \psi}W_{f, \varphi}^*$ is isometric on $\mathcal{F}^2$;\\
   $(2)$ $W_{g, \psi}^*W_{f, \varphi}$ is isometric on $\mathcal{F}^2$;\\
   $(3)$ $W_{g, \psi}W_{f, \varphi}^*$ is unitary on $\mathcal{F}^2$;\\
    $(4)$ $W_{g, \psi}^*W_{f, \varphi}$ is unitary on $\mathcal{F}^2$;\\
    $(5)$  $|\varphi^{\prime}(z)\psi^{\prime}(z)|=1$ for all $z\in \mathbb C$ and  $|f(0)g(0)|e^{\frac{|\varphi(0)|^2+|\psi(0)|^2}{2}}=1$.
\end{thm}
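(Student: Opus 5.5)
By Theorem \ref{U}, statements $(3)$, $(4)$ and $(5)$ are already equivalent. Every unitary operator is isometric, so $(3)\Rightarrow(1)$ and $(4)\Rightarrow(2)$ are immediate. It therefore remains to prove $(1)\Rightarrow(5)$ and $(2)\Rightarrow(5)$. The plan is to first force $|a|=|c|=1$, where $\varphi(z)=az+b$ and $\psi(z)=cz+d$ as given by Theorem \ref{L1}. Once that holds, Theorem \ref{I} makes the product invertible, and an invertible isometry is unitary; Theorem \ref{U} then yields $(5)$.

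\emph{Case $(2)$.} Suppose $W_{g,\psi}^*W_{f,\varphi}$ is an isometry. Then
$$\|h\|=\|W_{g,\psi}^*W_{f,\varphi}h\|\leqslant \|W_{g,\psi}^*\|\,\|W_{f,\varphi}h\|,$$
so $W_{f,\varphi}$ is bounded below. The argument in the proof of Theorem \ref{U} (Theorem \ref{L8}, with the rank-one case $a=0$ excluded) then gives $|a|=1$, so $W_{f,\varphi}$ is invertible by Theorem \ref{L7}. Consequently $W_{g,\psi}^*=(W_{g,\psi}^*W_{f,\varphi})W_{f,\varphi}^{-1}$ is bounded below. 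For $|c|=1$ I would use that the range of $W_{g,\psi}^*W_{f,\varphi}$ is closed and equals the range of $W_{g,\psi}^*$, since $W_{f,\varphi}$ is onto. A bounded-below $W_{g,\psi}^*$ gives a surjective $W_{g,\psi}$, and Theorem \ref{L7} then gives $|c|=1$. So $W_{g,\psi}$, and hence the product, is invertible, which completes this case.

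\emph{Case $(1)$.} Suppose $W_{g,\psi}W_{f,\varphi}^*$ is an isometry. The same estimate shows that $W_{f,\varphi}^*$ is bounded below, so $W_{f,\varphi}$ is surjective. Theorem \ref{L7} gives $|a|=1$ and $W_{f,\varphi}$ invertible. Then $W_{g,\psi}=(W_{g,\psi}W_{f,\varphi}^*)(W_{f,\varphi}^*)^{-1}$ is bounded below, hence has closed range, and Theorem \ref{L8} gives $c=0$ or $|c|=1$. The value $c=0$ is excluded because it would make $W_{g,\psi}$ of rank one (as shown in the proof of Theorem \ref{U}), which is incompatible with being bounded below on an infinite-dimensional space. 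Hence $|c|=1$, both factors are invertible, the isometry is onto, and it is therefore unitary.

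The main point needing care is the surjectivity step. The inference ``$W_{g,\psi}^*$ bounded below $\Rightarrow$ $W_{g,\psi}$ surjective'' in Case $(2)$ follows from the closed range theorem, and ``$W_{f,\varphi}^*$ bounded below $\Rightarrow$ $W_{f,\varphi}$ surjective'' in Case $(1)$ is the same fact. I expect no real obstacle beyond citing these correctly; everything else reduces to Theorems \ref{U}, \ref{I}, \ref{L7} and \ref{L8}.
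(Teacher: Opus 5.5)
Your proposal is correct and follows essentially the paper's argument: you force $|\varphi'|=1$ from the bounded-belowness of $W_{f,\varphi}$ or $W_{f,\varphi}^*$ via Theorems \ref{L7} and \ref{L8}, then get $|\psi'|=1$, so the isometry is invertible and hence unitary, and you finish with Theorem \ref{U}. The only cosmetic difference is how $|\psi'|=1$ is obtained: the paper notes that $W_{g,\psi}W_{g,\psi}^*$ (resp.\ $W_{g,\psi}^*W_{g,\psi}$) is invertible and cites Theorem \ref{I}, whereas you unfold that step directly through the closed range theorem and Theorem \ref{L8}.
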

\begin{proof}

   Let us show   $(2)\Rightarrow (4)$ first.  Assuming that $W_{g, \psi}^*W_{f, \varphi}$ is isometric on $\mathcal{F}^2$,  we are going to show $|\varphi^{\prime}(z)\psi^{\prime}(z)|=1$.  Denoting $A=W_{g, \psi}W_{g, \psi}^*$, we have
  $$I=(W_{g, \psi}^*W_{f, \varphi})^*W_{g, \psi}^*W_{f, \varphi}=W_{f, \varphi}^*AW_{f, \varphi}.$$
From the proof of Theorem \ref{U}, we have that $W_{f, \varphi}$ is bounded below.
 Since $|\varphi^{\prime}(z)|=0$ implies that $W_{f, \varphi}$ is compact, we  have $|\varphi^{\prime}(z)|=1$. Thus $W_{f, \varphi}$ and $W_{f, \varphi}^*$ are both invertible, and so is $W_{g, \psi}W_{g, \psi}^*$. By Theorem \ref{I}, we  obtain $|\psi^{\prime}(z)|=1$ immediately and hence $W_{g, \psi}$ is invertible.
    Consequently, the isometry $W_{g, \psi}^*W_{f, \varphi}$ is invertible and hence it is unitary.

Next we are going to show that $(1)\Rightarrow (3)$. Assuming $(1)$ holds,  we have $$W_{f, \varphi}W_{g, \psi}^*W_{g, \psi}W_{f, \varphi}^*=I,$$
 which yields that $W_{f, \varphi}$ is surjective, and hence $|\varphi^{\prime}(z)|=1.$ It follows that both $W_{f, \varphi}$ and $W_{f, \varphi}^*$ are invertible, so is $W_{g, \psi}^*W_{g, \psi}$.  Using Theorem \ref{I} again, we have that $|\psi^{\prime}(z)|=1$ and hence $W_{g, \psi}$ is invertible. Then $W_{g, \psi}W_{f, \varphi}^*$ is unitary and thus we establish that $(1)\Rightarrow (3)$. Therefore,  $(1)\Leftrightarrow (3)$ and $(2)\Leftrightarrow (4)$.  Combining this  with Theorem \ref{U}, we finish the proof of  Theorem \ref{M5}.
  \end{proof}

In the rest of this section, we turn to the discussion of the compactness of the products $W_{g, \psi}W_{f,\varphi}^*$  and $W_{g, \psi}^*W_{f,\varphi}$.

      \begin{prop} \label{T2}
    Suppose $W_{f, \varphi}$ and $W_{g, \psi}$ are two bounded weighted composition operators on $\mathcal{F}^2$ with $f(0)g(0)\neq 0$.  Then we have:\\
     $(1)$ if $W_{g, \psi}W_{f,\varphi}^*$ is compact on $\mathcal{F}^2$, then $|\varphi^{\prime}(z)\psi^{\prime}(z)|<1$;\vspace{2mm}\\
     $(2)$ if $|\varphi^{\prime}(z)\psi^{\prime}(z)|<1$ for all $z\in \mathbb C$, $\varphi(0)\psi(0)\neq 0$ and $\rho(f)+\rho(g)<3$, then $W_{g, \psi}W_{f,\varphi}^*$ is compact.
    \end{prop}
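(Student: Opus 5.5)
The plan is to reduce both parts to statements about the single factors $W_{f,\varphi}$ and $W_{g,\psi}$, using Theorems \ref{I}, \ref{L1}, \ref{L3} and Lemma \ref{|a|=1}. Since both operators are bounded, Theorem \ref{L1}(1) gives $\varphi(z)=az+b$ and $\psi(z)=cz+d$ with $|a|,|c|\leqslant 1$. Hence $|\varphi'(z)\psi'(z)|=|ac|$ is a constant in $[0,1]$.

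\emph{Part (1).} I argue by contraposition. Suppose $|ac|=1$. Then Theorem \ref{I} (the implication $(3)\Rightarrow(1)$) shows that $W_{g,\psi}W_{f,\varphi}^*$ is invertible on $\mathcal F^2$. An invertible operator on an infinite-dimensional Hilbert space cannot be compact, because otherwise $I=T^{-1}T$ would be compact. So compactness forces $|ac|\neq 1$, and therefore $|ac|<1$.

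\emph{Part (2).} Assume $|ac|<1$, $bd\neq 0$ and $\rho(f)+\rho(g)<3$. It suffices to show that one of $W_{f,\varphi}$, $W_{g,\psi}$ is compact. If $W_{f,\varphi}$ is compact then so is $W_{f,\varphi}^*$, and in either case the product of a compact operator with a bounded one is compact. Since $|ac|<1$, at least one of $|a|,|c|$ is strictly less than $1$. I split into three cases.

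\begin{enumerate}
\item[(i)] $|a|=1$, $|c|<1$. Since $M(f,\varphi)<\infty$, Lemma \ref{|a|=1} gives $f(z)=f(0)e^{-a\overline{b}z}$. Because $f(0)\neq0$, $a\neq0$ and $b\neq0$, the exponent is a nonzero linear function, so $\rho(f)=1$ exactly. Then $\rho(g)<3-1=2$, and Theorem \ref{L3} applied to $\psi(z)=cz+d$ with $|c|<1$ shows that $W_{g,\psi}$ is compact.
\item[(ii)] $|c|=1$, $|a|<1$. This is symmetric to (i): Lemma \ref{|a|=1} and $d\neq 0$ give $\rho(g)=1$. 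Hence $\rho(f)<2$, and Theorem \ref{L3} shows that $W_{f,\varphi}$ is compact.
\item[(iii)] $|a|<1$ and $|c|<1$. From $\rho(f)+\rho(g)<3$, one of the two orders is less than $3/2<2$. Theorem \ref{L3} applied to that factor makes it compact.
\end{enumerate}

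The only delicate point is case (i) (and likewise (ii)). There the hypothesis $\varphi(0)\psi(0)\neq0$ is exactly what pins $\rho(f)$ at $1$ rather than $0$. Without it, $\rho(f)=0$ would allow $\rho(g)$ to reach $2$, and then Theorem \ref{L3} would require a type condition that we do not have. So the key step is checking that the weight from Lemma \ref{|a|=1} has order exactly $1$. This is immediate since $f(z)=f(0)e^{-a\overline{b}z}$ with $a\overline{b}\neq0$.
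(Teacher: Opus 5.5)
Your proof is correct and follows the paper's argument essentially step for step. For part (1), you show that $|ac|=1$ forces the product to be invertible; you cite Theorem \ref{I}, while the paper applies Theorem \ref{L7} directly to both factors, which amounts to the same thing. For part (2), you use the same three-case split as the paper: Lemma \ref{|a|=1} together with $\varphi(0)\psi(0)\neq0$ gives $\rho(f)=1$ (or $\rho(g)=1$), and Theorem \ref{L3} then makes the other factor compact. Your remark in case (iii) that one of the two orders must be below $3/2$ spells out what the paper calls ``immediate.''
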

     \begin{proof}
  By Theorem~\ref{L1}, we can write
$$\varphi(z)=az+b,\qquad \psi(z)=cz+d,\qquad |a|,|c|\leq 1.$$  Let us prove (1) first. Suppose $W_{g, \psi}W_{f,\varphi}^*$ is compact on $\mathcal{F}^2$.
      If $|\varphi^{\prime}(z)\psi^{\prime}(z)|=1$, i.e., $|ac|=1$, then we have that $|a|=|c|=1$. By Theorem \ref{L7}, we have that both $W_{g, \psi}$ and $W_{f, \varphi}$ are invertible, so is $W_{g, \psi}W_{f, \varphi}^*$. This is a contradiction, since $W_{g, \psi}W_{f, \varphi}^*$ is compact. Thus $(1)$ holds.

        Now we are going to prove (2). Suppose that  $|\varphi^{\prime}(z)\psi^{\prime}(z)|=|ac|<1$ and $\rho(f)+\rho(g)<3$. To finish the proof, we consider the following three cases:

        (i) $|a|=1, |c|<1$. In this case, it was shown in Lemma \ref{|a|=1} that $$f(z)=f(0)e^{-a{\overline{b}z}}.$$
        We claim that $\rho(f)=1$. Indeed,
         since $\varphi(0)=b\neq 0$ and $|a|=1$, we can write $-a\overline{b}=|ab|e^{\mathrm{i}t}=|b|e^{\mathrm{i}t}$ for some $t\in \R$. Writing $z=re^{\mathrm{i}\theta}$ with $r>0$ and $\theta \in [-\pi, \pi]$, we obtain
         $$|f(z)|=\big|f(0)e^{-a{\overline{b}z}}\big|=|f(0)|e^{\mathrm{Re}(-a{\overline{b}z})}=|f(0)|e^{\mathrm{Re}[r|b|e^{\mathrm{i}(\theta+t)}]}=|f(0)|e^{r|b|\cos(\theta+t)}.$$
        It follows that
         $$M(r)=\sup_{|z|=r}|f(z)|=\sup_{\theta\in [-\pi, \pi]} |f(0)|e^{r|b|\cos(\theta+t)}=|f(0)|e^{r|b|},$$
    which yields
         \begin{align*}
           \begin{split}
           \rho(f)&=\limsup_{r\rightarrow \infty}  \frac{\log \log M(r)}{\log r}\\
           &=\limsup_{r\rightarrow \infty}\frac{\log (\log |f(0)|+r|b|)}{\log r}\\
           &=\limsup_{r\rightarrow \infty}\frac{r|b|}{\log |f(0)|+r|b|}\\
           &=1.
           \end{split}
         \end{align*}
         By the assumption, we deduce that $\rho(g)<2$. Then it follows  from Theorem $\ref{L3}$  that $W_{g, \psi}$ is compact, and hence $W_{g, \psi}W_{f, \varphi}^*$ is compact.

        (ii)  $|c|=1, |a|<1$. Applying the same argument as the one used in (i), we obtain that $W_{g, \psi}W_{f, \varphi}^*$ is compact.

        (iii) $|a|<1, |c|<1$. In this case, the desired conclusion follows from Theorem $\ref{L3}$ immediately.

This completes the proof of Proposition \ref{T2}.
    \end{proof}

Using the same method as the one used in the proof of Proposition \ref{T2}, we can also obtain  a necessary condition and a sufficient condition for $W_{g, \psi}^*W_{f,\varphi}$  to be compact on the Fock space.

   \begin{prop}\label{T3}
    Suppose that $W_{f, \varphi}$ and $W_{g, \psi}$ are two  bounded weighted composition operators on $\mathcal{F}^2$ with $f(0)g(0)\neq 0$. Then we have:\\
     $(1)$ if $W_{g, \psi}^*W_{f,\varphi}$ is compact on $\mathcal{F}^2$, then $|\varphi^{\prime}(z)\psi^{\prime}(z)|<1$;\vspace{2mm}\\
     $(2)$ if $|\varphi^{\prime}(z)\psi^{\prime}(z)|<1$ for all $z\in \mathbb C$, $\varphi(0)\psi(0)\neq 0$ and $\rho(f)+\rho(g)<3$, then $W_{g, \psi}^*W_{f,\varphi}$ is compact.
    \end{prop}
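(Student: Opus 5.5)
The plan is to follow the proof of Proposition \ref{T2} almost verbatim, using two facts: compact operators form a two-sided ideal, and an operator is compact if and only if its adjoint is. By Theorem \ref{L1}, write $\varphi(z)=az+b$ and $\psi(z)=cz+d$ with $|a|,|c|\leqslant 1$, so that $|\varphi'(z)\psi'(z)|=|ac|$ is constant.

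For part (1), suppose $W_{g,\psi}^*W_{f,\varphi}$ is compact, and assume for contradiction that $|ac|=1$. Then $|a|=|c|=1$. By Theorem \ref{L7}, both $W_{f,\varphi}$ and $W_{g,\psi}$ are invertible, hence so are $W_{g,\psi}^*$ and the product $W_{g,\psi}^*W_{f,\varphi}$. An invertible operator on the infinite-dimensional space $\mathcal F^2$ cannot be compact, which is the required contradiction. So $|ac|<1$.

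For part (2), assume $|ac|<1$, $bd\neq 0$ and $\rho(f)+\rho(g)<3$, and split into three cases.
\begin{itemize}
\item Case $|a|=1$, $|c|<1$. Lemma \ref{|a|=1} gives $f(z)=f(0)e^{-a\overline{b}z}$. Since $b\neq 0$, the computation of $M(r)$ in Proposition \ref{T2} shows $\rho(f)=1$, so $\rho(g)<2$. Theorem \ref{L3} then makes $W_{g,\psi}$ compact, hence $W_{g,\psi}^*$ is compact, and therefore $W_{g,\psi}^*W_{f,\varphi}$ is compact because $W_{f,\varphi}$ is bounded.
\item Case $|c|=1$, $|a|<1$. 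Symmetrically, $g(z)=g(0)e^{-c\overline{d}z}$ with $d\neq 0$ gives $\rho(g)=1$, so $\rho(f)<2$. Theorem \ref{L3} makes $W_{f,\varphi}$ compact, and the product is compact since $W_{g,\psi}^*$ is bounded.
\item Case $|a|,|c|<1$. Here $\rho(f)+\rho(g)<3$ forces $\min\{\rho(f),\rho(g)\}<3/2<2$. Theorem \ref{L3} therefore applies to at least one of the two factors, making it compact, and the product is compact by the ideal property.
\end{itemize}

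There is no real obstacle: this is a direct transcription of Proposition \ref{T2}. The only points to check are the passage to adjoints (Schauder's theorem in Hilbert space) and, in the last case, that the order hypothesis leaves at least one factor of order below $2$.
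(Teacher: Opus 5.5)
Your proposal is correct and takes the same route as the paper, which simply transfers the proof of Proposition \ref{T2} to $W_{g,\psi}^*W_{f,\varphi}$. Part (1) uses invertibility via Theorem \ref{L7}, and part (2) uses the three-case split with Lemma \ref{|a|=1}, Theorem \ref{L3}, and the ideal and adjoint properties of compact operators. Your remark that $\rho(f)+\rho(g)<3$ forces $\min\{\rho(f),\rho(g)\}<2$ when $|a|,|c|<1$ spells out the step the paper calls ``immediate.''
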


For unweighted composition operators, the following corollary tells us that the compactness of the products  $C_{\psi}C_{\varphi}^*$ and $C_{\psi}^*C_{\varphi}$ is equivalent.

\begin{cor}\label{C2}
   Suppose that $C_{\varphi}$ and $C_{\psi}$ are  bounded composition operators on the Fock space $\mathcal{F}^2$. Then the following are equivalent:\\
   $(1)$ $C_{\psi}C_{\varphi}^*$ is compact on $\mathcal{F}^2$;\\
   $(2)$ $C_{\psi}^*C_{\varphi}$ is compact on $\mathcal{F}^2$;\\
   $(3)$ $|\varphi^{\prime}(z)\psi^{\prime}(z)|<1$ for all $z\in \mathbb C$.
\end{cor}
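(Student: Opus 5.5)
The plan is to specialize Propositions \ref{T2} and \ref{T3} to $f=g=1$. We cannot apply their part (2) directly, because it assumes $\varphi(0)\psi(0)\neq 0$; instead we argue directly, since $f=g=1$ removes all growth issues. By Theorem \ref{L1}, boundedness of $C_\varphi$ and $C_\psi$ gives $\varphi(z)=az+b$ and $\psi(z)=cz+d$ with $|a|,|c|\leqslant 1$. It is also known (and follows from Theorem \ref{L1}(1) with $f=1$) that when $|a|=1$ we need $b=0$: otherwise $e^{|az+b|^2-|z|^2}=e^{2\mathrm{Re}(a z\overline{b})+|b|^2}$ is unbounded. The same holds for $\psi$. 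The proof then runs as a cycle of implications.

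\textbf{(1)$\Rightarrow$(3) and (2)$\Rightarrow$(3).} This is exactly part (1) of Propositions \ref{T2} and \ref{T3}, since $f(0)g(0)=1\neq 0$. Alternatively, if $|ac|=1$, then $|a|=|c|=1$, so Theorem \ref{L7} makes both $C_\varphi$ and $C_\psi$ invertible. The products are then invertible and hence not compact on the infinite-dimensional space $\mathcal F^2$.

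\textbf{(3)$\Rightarrow$(1) and (3)$\Rightarrow$(2).} Suppose $|ac|<1$. Then $|a|<1$ or $|c|<1$. If $|c|<1$, Theorem \ref{L1}(2) with weight $1$ applies: we need $\lim_{|z|\to\infty}e^{|cz+d|^2-|z|^2}=0$. This holds because the exponent is $-(1-|c|^2)|z|^2+O(|z|)\to-\infty$. So $C_\psi$ is compact, and hence so are $C_\psi C_\varphi^*$ and $C_\psi^*C_\varphi$ (the latter because $C_\psi^*$ is compact). If instead $|a|<1$, the same argument makes $C_\varphi$ compact, hence $C_\varphi^*$ is compact, and both products are compact as compositions of a bounded operator with a compact one. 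One could also invoke Theorem \ref{L3}, since the constant function $1$ has order $0<2$.

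There is no real obstacle here. The only point needing care is not to invoke Proposition \ref{T2}(2) or \ref{T3}(2) verbatim, because of the hypothesis $\varphi(0)\psi(0)\neq 0$; the direct compactness of $C_\psi$ or $C_\varphi$ via Theorem \ref{L1}(2) bypasses it.
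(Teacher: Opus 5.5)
Your proposal is correct and follows essentially the paper's argument. Necessity comes from part (1) of Propositions \ref{T2} and \ref{T3}, and sufficiency from the compactness of whichever of $C_\varphi$, $C_\psi$ has slope of modulus less than $1$; the paper cites Theorem \ref{L3} for this, while you check it directly from Theorem \ref{L1}(2), which is equally valid (and the paper, like you, does not rely on Propositions \ref{T2}(2) or \ref{T3}(2), whose hypothesis $\varphi(0)\psi(0)\neq 0$ would be an obstacle).
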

\begin{proof}
   Note that $(1)\Rightarrow (3)$ and $(2)\Rightarrow (3)$ were obtained in  Propositions \ref{T2} and  \ref{T3}, respectively. It remains to show that $(3)\Rightarrow (1)$ and $(3)\Rightarrow (2)$.

   Assume that (3) holds. Then  at least one of $|\varphi^{\prime}(z)|$ and $|\psi^{\prime}(z)|$ is less than $1$ for all $z\in \mathbb C$.
    According to Theorem $\ref{L3}$, we deduce that one of $C_{\varphi}$ and $C_\psi$ is compact, which gives $(1)$ and $(2)$, finishing the proof.
\end{proof}

\section{Products with one bounded factor}\label{S4}

 In this section, we study the boundedness, compactness, invertibility, and isometry of the
 product $W_{g,\psi}W_{f,\varphi}^*$ under the  assumption that one of the two factors is bounded and the other is unbounded.

Let us begin with a necessary condition for  the boundedness of $W_{g,\psi}W_{f,\varphi}^*$.

\begin{thm}\label{ext-affine}
Suppose that $f$ and $g$ are entire functions with $f(0)g(0)\neq0$. Let $W_{f,\varphi}$ and $W_{g,\psi}$ be densely defined maximal weighted composition operators on $\mathcal F^2$. Assume that exactly one of $W_{f,\varphi}$ and $W_{g,\psi}$ is bounded and that the natural product $T_0=W_{g,\psi}W_{f,\varphi}^*$ admits a unique bounded extension $T$ to $\mathcal F^2$. Then $f,g\in\mathcal F^2$, and there exist constants $a,b,c,d\in\C$ such that
$$\varphi(z)=az+b,\qquad \psi(z)=cz+d,\qquad 0<|ac|\leqslant1.$$
Moreover, if $W_{f,\varphi}$ is bounded, then $|a|\leqslant1$; if $W_{g,\psi}$ is bounded, then $|c|\leqslant1$.
\end{thm}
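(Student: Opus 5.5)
The plan rests entirely on the kernel formula \eqref{ext-kernel} of Lemma \ref{ext-domain}. Pairing it with $K_z$ gives, for all $z,w\in\C$,
$$|f(w)|\,|g(z)|\,\bigl|e^{\overline{\varphi(w)}\psi(z)}\bigr| = |\langle TK_w,K_z\rangle|\leqslant \|T\|\,e^{\frac{|z|^2+|w|^2}{2}}. \qquad (\ast)$$
Note also that $TK_w=\overline{f(w)}\,g\,e^{\overline{\varphi(w)}\psi}\in\mathcal F^2$ with norm at most $\|T\|e^{|w|^2/2}$. By the symmetric identity $\langle T^*K_z,K_w\rangle=\overline{\langle TK_w,K_z\rangle}$, the function $f\,e^{\overline{\psi(z)}\varphi}$ lies in $\mathcal F^2$ whenever $g(z)\neq0$.

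As remarked after Lemma \ref{ext-domain}, we may assume $\varphi,\psi$ are nonconstant. By symmetry, treat the case where $W_{f,\varphi}$ is bounded. Theorem \ref{L1} then gives $f\in\mathcal F^2$ and $\varphi(z)=az+b$ with $0<|a|\leqslant1$. In particular $\varphi$ maps $\C$ onto $\C$.

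\textbf{Step 1: $\psi$ is affine.} Since the zeros of $f$ are discrete and $\varphi$ is surjective, for every $\lambda$ outside a discrete set we can pick $w$ with $\overline{\varphi(w)}=\lambda$ and $f(w)\neq0$. By $(\ast)$ this gives
$$\log|g(z)|+\operatorname{Re}(\lambda\psi(z))\leqslant \tfrac{|z|^2}{2}+C_\lambda, \qquad z\in\C.$$
Adding the inequalities for $\lambda$ and $-\lambda$ shows $\log|g(z)|\leqslant |z|^2/2+C$. Jensen's formula together with $g(0)\neq0$ then bounds the circle means of $\log^-|g|$ by $r^2/2+C$.

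Averaging the displayed inequality over $|z|=r$, using it for both $\pm\lambda$, yields
$$\frac1{2\pi}\int_0^{2\pi}\bigl|\operatorname{Re}\,(\lambda\psi(re^{i\theta}))\bigr|\,d\theta\leqslant r^2+C'_\lambda.$$
A Borel--Carath\'eodory/Fourier-coefficient argument then bounds the Taylor coefficients of $\psi$, so $\psi$ is a polynomial of degree at most $2$. If $\psi$ had a $z^2$-coefficient $\alpha\neq0$, applying the bound to $\lambda=t$ large would give roughly $t|\alpha|r^2\lesssim r^2$ for every $t$, which is absurd. Hence $\psi(z)=cz+d$ with $c\neq0$.

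\textbf{Step 2: $g\in\mathcal F^2$.} Choose $\lambda$ such that both $\pm\lambda$ are of the form $\overline{\varphi(w)}$ with $f(w)\neq0$. Then $g e^{\lambda\psi}$ and $g e^{-\lambda\psi}$ both lie in $\mathcal F^2$. Since $|e^{u}|^2+|e^{-u}|^2\geqslant2$ pointwise, we get
$$\|g\|^2\leqslant\tfrac12\bigl(\|ge^{\lambda\psi}\|^2+\|ge^{-\lambda\psi}\|^2\bigr)<\infty.$$
The case where $W_{g,\psi}$ is bounded is identical, with the roles of $f,\varphi$ and $g,\psi$ exchanged via $T^*K_z$. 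This yields $f,g\in\mathcal F^2$, the affine forms, and the bound $|a|\leqslant1$ or $|c|\leqslant1$ for whichever factor is bounded.

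\textbf{Step 3: $|ac|\leqslant1$.} Put $z=re^{i\theta}$ and $w=re^{i\theta'}$, with $\theta'$ chosen in terms of $\theta$ so that $\overline{aw}\,cz=|ac|r^2$. Taking logarithms in $(\ast)$ gives
$$\log|f(w)|+\log|g(z)|+|ac|r^2-O(r)\leqslant r^2+\log\|T\|.$$
Averaging over $\theta$ and invoking Jensen's inequality (the circle means of $\log|f|$ and $\log|g|$ are at least $\log|f(0)|$ and $\log|g(0)|$) gives $|ac|r^2\leqslant r^2+O(r)$. Letting $r\to\infty$ yields $|ac|\leqslant1$.

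The main obstacle is Step 1. Passing from the one-sided, $\lambda$-dependent growth inequality to polynomiality of $\psi$ requires careful control of $\log^-|g|$ through Jensen, and a Borel--Carath\'eodory-type estimate using only averaged, not pointwise, bounds on $\operatorname{Re}(\lambda\psi)$.
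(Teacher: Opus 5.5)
Your proof is correct, but your Step 1 (affinity of $\psi$) takes a genuinely different route from the paper. Steps 2 and 3 are essentially the paper's. For $g\in\mathcal F^2$ the paper multiplies $TK_{w_1}TK_{w_2}=\overline{f(w_1)f(w_2)}\,g^2$ with $\varphi(w_1)+\varphi(w_2)=0$ and applies Cauchy--Schwarz, whereas you add the two squared moduli and use $e^{2x}+e^{-2x}\geqslant2$. For $|ac|\leqslant1$ you use the same substitution $z=\frac{a\overline{c}}{|ac|}w$, with Jensen's inequality on circle means in place of the paper's Liouville argument; the two are equivalent.

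For affinity, the paper restricts the two-variable estimate to $w=\overline z$ and forms $u(z)=\overline{f(\overline z)}g(z)$ and $v(z)=u(z)e^{\overline{\varphi(\overline z)}\psi(z)}$. These have order at most $2$ (which uses $f,g\in\mathcal F^2$, proved first) and the same zeros. Hadamard factorization then shows $\overline{\varphi(\overline z)}\psi(z)$ is a polynomial of degree at most $2$, and dividing by the linear factor $\overline{\varphi(\overline z)}$ gives $\psi$ affine.

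You instead use surjectivity of $\varphi$ to get one-variable estimates for a whole family of $\lambda$, control $\log^-|g|$ by Jensen's formula, and read off Taylor coefficients. The $n$-th Fourier coefficient of $\theta\mapsto\mathrm{Re}(\lambda\psi(re^{i\theta}))$ is $\frac12\lambda c_nr^n$ and is dominated by the $L^1$ circle mean, so no Borel--Carath\'eodory estimate is actually needed. The quadratic coefficient dies as $|\lambda|\to\infty$ precisely because the $r^2$-coefficient in your bound does not depend on $\lambda$; it is worth saying this explicitly.

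Your route is more elementary: it avoids canonical products, zero-summability and the convergence arguments that take up much of the paper's proof, and it does not need $g\in\mathcal F^2$ beforehand. The paper's route gets the polynomial structure of $\overline{\varphi(\overline z)}\psi(z)$ from a single diagonal restriction without varying a parameter, at the cost of the full Hadamard machinery.
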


\begin{proof}
We first prove that $f,g\in\mathcal F^2$.
Suppose that $W_{f,\varphi}$ is bounded. Then $f=W_{f,\varphi}1\in\mathcal F^2$,
and Theorem \ref{L1} gives $\varphi(z)=az+b$ with $|a|\leqslant 1$. Since $\varphi$ is nonconstant, $a\neq0$. Let $w_0=-\frac{b}{a}$. If $f(w_0)=0$,
since the zeros of $f$ are isolated, we can find an $r>0$ such that $f(w)\neq0$ in $0<|w-w_0|\leqslant r$; if $f(w_0)\neq 0$, by the continuity of $f$, we can also choose an $r>0$ such that $f(w)\neq0$ in $0<|w-w_0|\leqslant r$. Thus  we can choose $w_1=w_0+\frac{r}{2}, w_2=w_0-\frac{r}{2}$ such that $f(w_1)f(w_2)\neq0$. Moreover, $w_1+w_2=2w_0=-\frac{2b}{a}$. Thus
$$
\varphi(w_1)+\varphi(w_2)
=a(w_1+w_2)+2b=0.
$$
Using \eqref{ext-kernel} in Lemma \ref{ext-domain}, we obtain
$$
(TK_{w_1})(z)(TK_{w_2})(z)
=\overline{f(w_1)f(w_2)}g(z)^2.
$$
It follows that
\begin{align*}
  \int_{\C}|g(z)|^2d\mu(z)&=\int_\C \frac{\big|(TK_{w_1})(z)(TK_{w_2})(z)\big|}{|f(w_1)f(w_2)|}d\mu(z)\\
  &\leqslant\frac{\|TK_{w_1}\|\|TK_{w_2}\|}{|f(w_1)f(w_2)|}<\infty
\end{align*}
Hence $g\in\mathcal F^2$.

If $W_{g,\psi}$ is bounded, we can use the same argument to derive $\psi(z)=cz+d$ with $c\neq 0$ and then to obtain $f, g\in \mathcal F^2$.

Next we prove $\varphi$ and $\psi$ are affine under the assumption that $T$ is a bounded extension.
By Lemma \ref{ext-domain} and the reproducing property, for all $z,w\in\C$,
\begin{align*}
|f(w)g(z)|e^{\mathrm{Re}(\overline{\varphi(w)}\psi(z))}=|TK_w(z)|=|\langle TK_w,K_z\rangle|\leqslant\|T\|\|K_w\|\|K_z\|=\|T\|e^{\frac{|w|^2+|z|^2}{2}}.
\end{align*}
Consequently,
\begin{align}\label{ext-two-variable}
|f(w)g(z)|e^{\mathrm{Re}(\overline{\varphi(w)}\psi(z))
-\frac{|w|^2+|z|^2}{2}}
\leqslant\|T\|,\qquad z,w\in\C.
\end{align}

 Define the following entire functions by
$$
u(z)=\overline{f(\overline{z})}g(z),\qquad
v(z)=u(z)e^{\overline{\varphi(\overline{z})}\psi(z)}.$$
 Since $f,g\in\mathcal F^2$, the point evaluation estimate gives
$$|u(z)|\leqslant\|f\|\|g\|e^{|z|^2},\qquad z\in\C.$$
Taking $w=\overline{z}$ in \eqref{ext-two-variable}, we also obtain
$$|v(z)|\leqslant\|T\|e^{|z|^2},\qquad z\in\C.$$
Thus $u$ and $v$ are nonzero entire functions of order at most $2$.
Since
$$e^{\overline{\varphi(\overline{z})}\psi(z)}\neq0,\qquad z\in\C,$$
$u$ and $v$ have the same zeros, counted with multiplicities.
Let $\{\xi_j\}$ denote the common zeros of $u$ and $v$,
counted with multiplicities. Since $u(0)v(0)\neq0$, all
$\xi_j$ are nonzero.
By Lemma \ref{zero-summability},
$$\sum_{j=1}^{\infty}\frac{1}{|\xi_j|^3}<\infty.$$

Next we prove  that the canonical product
$$\Pi(z)=\prod_{j=1}^{\infty}E_2\left(\frac{z}{\xi_j}\right)$$
converges locally uniformly in $\C$.
If the common zero set of $u$ and $v$ is finite, the assertion
is immediate. Otherwise, fix $R>0$ and let
$$\Omega=\{z\in\C:|z|<R\}.$$
Since $$\sum_{j=1}^{\infty}\frac{1}{|\xi_j|^3}<\infty,$$
 we have
$$
\lim_{j\rightarrow\infty}\frac{1}{|\xi_j|^3}=0.
$$
Hence there exists a positive integer $J$ such that
$$
\frac{1}{|\xi_j|^3}\leqslant\frac{1}{(2R)^3},
\qquad j\geqslant J.
$$
Consequently, $|\xi_j|\geqslant2R$ for all $j\geqslant J$,
and therefore
$$
\left|\frac{z}{\xi_j}\right|<\frac{1}{2},
\qquad z\in\Omega,\quad j\geqslant J.
$$
By Lemma \ref{lem:canonical-factor-estimate}, there exists a
constant $C_1>0$ such that
$$
\left|E_2\left(\frac{z}{\xi_j}\right)-1\right|
\leqslant C_1\left|\frac{z}{\xi_j}\right|^3
\leqslant\frac{C_1R^3}{|\xi_j|^3},
\qquad z\in\Omega,\quad j\geqslant J.
$$
For $1\leqslant j\leqslant J-1$, since the finitely many functions
$E_2(\frac{z}{\xi_j})-1$ are continuous
on the compact set $\overline{\Omega}$, they are bounded
on $\Omega$. Thus, there exists a constant $C_2>0$ such that
$$\left|E_2\left(\frac{z}{\xi_j}\right)-1\right|\leqslant \frac{C_2R^3}{|\xi_j|^3}.$$
Taking $C=\max\{C_1, C_2\}$, we have $$\left|E_2\left(\frac{z}{\xi_j}\right)-1\right|\leqslant \frac{CR^3}{|\xi_j|^3},\ \ \ z\in \Omega,\ \  j\geqslant 1.$$
On the other hand, since
$$
\sum_{j=J}^{\infty}\frac{CR^3}{|\xi_j|^3}
\leqslant
CR^3\sum_{j=1}^{\infty}\frac{1}{|\xi_j|^3}
<\infty,
$$
we have
$$\sum_{j=1}^{\infty}\frac{CR^3}{|\xi_j|^3}<\infty.$$
It follows from
Lemma \ref{lem:holomorphic} that
$$
\prod_{j=1}^{\infty}E_2\left(\frac{z}{\xi_j}\right)
$$
converges uniformly on $\Omega$.  Since $R>0$ was arbitrary, $\Pi$ converges locally uniformly in $\C$ and defines an entire function.

Fix $h\in\{u,v\}$, and let $\rho(h)$ denote the order of $h$.
Choose the integer $k$ such that
$k\leqslant\rho(h)<k+1$. Since $\rho(h)\leqslant2$, we have
$k\in\{0,1,2\}$. By Theorem \ref{hadamard-factorization},
there exists a polynomial $P$ of degree at most $k$ such that
$$
h(z)=e^{P(z)}
\prod_{j=1}^\infty E_k\left(\frac{z}{\xi_j}\right),
\qquad z\in\C.
$$
Note that $h(0)\neq0$, since $u(0)v(0)\neq0$. We consider three cases.

\textbf{Case 1}. If $k=2$, this already gives $h=\Pi e^P$ with
$\deg P\leqslant2$.

\textbf{Case 2}. If $k=1$, then $\rho(h)<2$. Applying Lemma \ref{zero-summability}, we obtain
$$
\sum_{j=1}^\infty\frac{1}{|\xi_j|^2}<\infty.
$$
Since
$$E_2(\zeta)=(1-\zeta)e^{\zeta+\frac{\zeta^2}{2}}=E_1(\zeta)e^{\frac{\zeta^2}{2}},$$
we have
$$
E_1(\zeta)=E_2(\zeta)e^{-\frac{\zeta^2}{2}},\ \ \ \zeta\in \C.
$$
For every integer $N\geqslant1$, substituting $\zeta=\frac{z}{\xi_j}$ into the above equality gives
\begin{align}\label{eq:k=1}
\prod_{j=1}^{N}E_1\left(\frac{z}{\xi_j}\right)
&=
\left[\prod_{j=1}^{N}E_2\left(\frac{z}{\xi_j}\right)\right]
\prod_{j=1}^{N}e^{-\frac{z^2}{2\xi_j^2}}=
\left[\prod_{j=1}^{N}E_2\left(\frac{z}{\xi_j}\right)\right]
\exp\left\{-\frac{z^2}{2}
\sum_{j=1}^{N}\frac{1}{\xi_j^2}\right\}.
\end{align}
Since
$$\sum_{j=1}^{\infty}\left|\frac{1}{\xi_j^2}\right|
=
\sum_{j=1}^{\infty}\frac{1}{|\xi_j|^2}<\infty,$$
taking the limit in \eqref{eq:k=1} gives
$$
h(z)=e^{P(z)}\lim_{N\rightarrow\infty}\prod_{j=1}^{N}E_1\left(\frac{z}{\xi_j}\right)
=\Pi(z)\exp\left\{
P(z)-\frac{z^2}{2}\sum_{j=1}^{\infty}\frac{1}{\xi_j^2}
\right\}.
$$
Here $\deg P\leqslant1$, so the polynomial in the exponent
has degree at most $2$.

\textbf{Case 3}. If $k=0$, then $\rho(h)<1$. Applying  Lemma \ref{zero-summability} again, we have
$$
\sum_{j=1}^{\infty}\frac{1}{|\xi_j|}<\infty\ \ \ \text{and}\ \ \
\sum_{j=1}^{\infty}\frac{1}{|\xi_j|^2}<\infty.
$$
Using
$$
E_0(\zeta)=E_2(\zeta)e^{-\zeta-\frac{\zeta^2}{2}},
$$
we similarly obtain
$$
h(z)=\Pi(z)\exp\left\{
P(z)-z\sum_{j=1}^{\infty}\frac{1}{\xi_j}
-\frac{z^2}{2}\sum_{j=1}^{\infty}\frac{1}{\xi_j^2}
\right\}.
$$
Since $P$ is constant, the polynomial in this exponent
also has degree at most $2$.

Applying these conclusions to $u$ and $v$, respectively,
we obtain polynomials $p_1,p_2$ of degree at most $2$
such that
$$
u(z)=\Pi(z)e^{p_1(z)},\qquad
v(z)=\Pi(z)e^{p_2(z)},\qquad z\in\C.
$$
Setting $p=p_2-p_1$, we have $\deg p\leqslant2$ and
$$
v(z)=\Pi(z)e^{p_2(z)}
=u(z)e^{p(z)},\qquad z\in\C.
$$
On the other hand, the definition of $v$ gives
$$
v(z)=u(z)e^{\overline{\varphi(\overline{z})}\psi(z)},\qquad z\in\C.
$$
Hence
$$
e^{\overline{\varphi(\overline{z})}\psi(z)}=e^{p(z)}
$$
whenever $u(z)\neq0$. Since $u$ is not identically zero, the identity theorem
extends this equality to all of $\C$. Therefore,
$$
e^{\overline{\varphi(\overline{z})}\psi(z)-p(z)}=1,\qquad z\in\C.
$$
Differentiating the preceding identity with respect to $z$, we obtain
$$
\frac{d}{dz}\left(
\overline{\varphi(\overline{z})}\psi(z)-p(z)
\right)=0.
$$
Hence there exists a constant $M\in\C$ such that
$$
\overline{\varphi(\overline{z})}\psi(z)-p(z)=M.
$$
It follows that
$$\overline{\varphi(\overline{z})}\psi(z)=p(z)+M:=q(z),\ \ \ z\in \C,$$
where $q(z)$ is a polynomial with degree at most 2.

Suppose first that $W_{f,\varphi}$ is bounded. By Theorem \ref{L1}$(1)$ and the assumption that $\varphi$ is nonconstant, we have $\varphi(z)=az+b$ for some constants $a,b\in\C$ with $a\neq0$. Therefore,
$$(\overline{a}z+\overline{b})\psi(z)=q(z).$$
Since $\psi$ is entire, the above equality implies  $\psi$ must be  a polynomial of degree at most $1$. As $\psi$ is nonconstant, it follows that $\psi(z)=cz+d$ with $c\neq0$.
Similarly, if $W_{g,\psi}$ is bounded, we obtain the desired affine forms of $\varphi$ and $\psi$.

We next prove that $|ac|\leqslant1$. Suppose $|ac|>1$. Taking $z=\frac{a\overline{c}}{|ac|}w$ in \eqref{ext-two-variable}, we obtain
\begin{align*}
&\left|
f(w)g\left(\frac{a\overline{c}}{|ac|}w\right)
e^{\left(a\overline{d}
+\frac{a|c|^2\overline{b}}{|ac|}\right)w}
\right|\leqslant
\|T\|e^{-\mathrm{Re}(\overline{b}d)}
e^{(1-|ac|)|w|^2},
\qquad w\in\C.
\end{align*}
Thus the entire function inside the modulus is bounded.
By Liouville's theorem, it is constant, and its value at
$w=0$ is $f(0)g(0)$. It follows that
$$f(w)g\left(\frac{a\overline{c}}{|ac|}w\right)
e^{\left(a\overline{d}
+\frac{a|c|^2\overline{b}}{|ac|}\right)w}=f(0)g(0),\ \ \ w\in\C.$$
So
$$|f(0)g(0)|\leqslant \|T\|e^{-\mathrm{Re}(\overline{b}d)}
e^{(1-|ac|)|w|^2},
\qquad w\in\C.$$
Taking $w=n\in\N$ and letting $n\to \infty$, we obtain
$$
0<|f(0)g(0)|
\leqslant
\lim_{n\rightarrow\infty}
\|T\|e^{-\mathrm{Re}(\overline{b}d)}
e^{(1-|ac|)n^2}
=0,
$$
since $|ac|>1$. This contradicts $f(0)g(0)\neq0$. Hence $|ac|\leqslant1$.

Finally, if $W_{f,\varphi}$ is bounded, then (1) of Theorem \ref{L1} gives $|a|\leqslant1$. Similarly, if $W_{g,\psi}$ is bounded, then $|c|\leqslant1$. This completes the proof.
\end{proof}

In the following results, membership of the weights in $\mathcal F^2$ and the affine forms of the symbols follow from Theorem \ref{ext-affine}.

\begin{prop}\label{ext-boundary}
Suppose that $f$ and $g$ are entire functions with $f(0)g(0)\neq0$.
Assume that the natural product $W_{g,\psi}W_{f,\varphi}^*$ has a unique
bounded extension $T$ to $\mathcal F^2$, and that exactly one of
$W_{f,\varphi}$ and $W_{g,\psi}$ is bounded on $\mathcal F^2$.
If $T$ is compact, then
$$|\varphi^{\prime}(z)\psi^{\prime}(z)|<1,\qquad z\in\C.$$
\end{prop}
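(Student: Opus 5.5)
By Theorem \ref{ext-affine}, $f,g\in\mathcal F^2$ and $\varphi(z)=az+b$, $\psi(z)=cz+d$ with $0<|ac|\leqslant 1$. So it suffices to rule out $|ac|=1$. I will assume $|ac|=1$ and contradict compactness of $T$ by testing $T$ on normalized reproducing kernels.

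\textbf{Step 1: the diagonal function is constant.} I redo the Liouville argument from the end of the proof of Theorem \ref{ext-affine}, now with $|ac|=1$. Set $\lambda=\frac{a\overline{c}}{|ac|}$, so $|\lambda|=1$. Substitute $z=\lambda w$ into \eqref{ext-two-variable}. Since $\overline{a}c\lambda=|ac|=1$, the quadratic term of $\mathrm{Re}\big(\overline{\varphi(w)}\psi(\lambda w)\big)$ equals $|w|^2$. This exactly cancels $\frac{|w|^2+|\lambda w|^2}{2}=|w|^2$. Hence the entire function
$$F(w)=f(w)\,g(\lambda w)\,e^{\gamma w},\qquad \gamma=a\overline{d}+\frac{a|c|^2\overline{b}}{|ac|},$$
satisfies $|F(w)|\leqslant \|T\|e^{-\mathrm{Re}(\overline{b}d)}$ on $\C$. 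By Liouville's theorem $F\equiv F(0)=f(0)g(0)\neq 0$.

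\textbf{Step 2: a kernel computation.} By Lemma \ref{ext-domain} and the reproducing property,
$$\langle Tk_w,k_z\rangle=e^{-\frac{|w|^2+|z|^2}{2}}\,\overline{f(w)}\,g(z)\,e^{\overline{\varphi(w)}\psi(z)}.$$
Taking $z=\lambda w$, the same cancellation as in Step 1 gives
$$|\langle Tk_w,k_{\lambda w}\rangle|=|F(w)|\,e^{\mathrm{Re}(\overline{b}d)}=|f(0)g(0)|\,e^{\mathrm{Re}(\overline{b}d)}>0$$
for every $w\in\C$. This is a positive constant independent of $w$.

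\textbf{Step 3: contradiction with compactness.} The normalized kernels $k_w$ tend to $0$ weakly as $|w|\to\infty$. Indeed, for a polynomial $h$ we have $\langle h,k_w\rangle=h(w)e^{-|w|^2/2}\to0$; polynomials are dense in $\mathcal F^2$ and $\|k_w\|=1$, so this extends to all $h$. Since $T$ is compact, $\|Tk_w\|\to0$. But $\|k_{\lambda w}\|=1$, so
$$|\langle Tk_w,k_{\lambda w}\rangle|\leqslant \|Tk_w\|\to0,$$
which contradicts Step 2. Therefore $|ac|<1$, that is, $|\varphi'(z)\psi'(z)|<1$ for all $z$.

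\textbf{Main obstacle.} The only delicate point is the bookkeeping of conjugates in the exponent: one must check that the choice $\lambda=a\overline{c}/|ac|$ makes the quadratic part exactly $|w|^2$, leaving only a linear term $\mathrm{Re}(\gamma w)$ plus the constant $\mathrm{Re}(\overline{b}d)$. Once that is verified, Steps 2 and 3 are routine.
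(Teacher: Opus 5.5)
Your proposal is correct and follows essentially the same route as the paper. You assume $|ac|=1$ and restrict \eqref{ext-two-variable} to the line $z=a\overline{c}w$; your $\lambda=a\overline{c}/|ac|$ coincides with $a\overline{c}$ in this case. Liouville then forces $f(w)g(a\overline{c}w)e^{(a\overline{d}+a|c|^2\overline{b})w}\equiv f(0)g(0)$, and the constant value $|\langle Tk_w,k_{a\overline{c}w}\rangle|=e^{\mathrm{Re}(\overline{b}d)}|f(0)g(0)|>0$ contradicts $\|Tk_w\|\to0$ under compactness.
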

\begin{proof}
Suppose that $T$ is compact.
By Theorem \ref{ext-affine}, there exist $a,b,c,d\in\C$ such that
$$\varphi(z)=az+b,\qquad\psi(z)=cz+d,\qquad0<|ac|\leqslant1.$$

Suppose that $|ac|=1$. Taking $z=a\overline{c} w$ in
\eqref{ext-two-variable}, we obtain
$$\left|f(w)g(a\overline{c} w)
e^{(a\overline{d}+a|c|^2\overline{b})w}\right|
\leqslant\|T\|e^{-\mathrm{Re}(\overline{b} d)},\qquad w\in\C.$$
As in the proof of Theorem \ref{ext-affine}, we have
\begin{align}\label{eq:constant}
f(w)g(a\overline{c} w)e^{(a\overline{d}+a|c|^2\overline{b})w}
=f(0)g(0),\qquad w\in\C.
\end{align}
Using  the kernel formula \eqref{ext-kernel} in Lemma \ref{ext-domain}
and $|a\overline{c} w|=|w|$, we can derive that
\begin{align}\label{eq:boundary-kernels}
\left|\left\langle Tk_w,k_{a\overline{c} w}\right\rangle\right|
&=e^{-|w|^2}\left|TK_w(a\overline{c} w)\right|\notag\\
&=e^{\mathrm{Re}(\overline{b} d)}
\left|f(w)g(a\overline{c} w)e^{(a\overline{d}+a|c|^2\overline{b})w}\right|\notag\\
&=e^{\mathrm{Re}(\overline{b} d)}|f(0)g(0)|,
\qquad w\in\C,
\end{align}
where the last equality follows from \eqref{eq:constant}.

Note that the sequence of normalized reproducing kernels $\{k_n\}$ converges
weakly to zero in $\mathcal F^2$.
Compactness of $T$ therefore implies
$$\lim_{n\rightarrow\infty}\|Tk_n\|=0.$$
Taking $w=n$ in \eqref{eq:boundary-kernels} and using
the Cauchy--Schwarz inequality together with
$\|k_{a\overline{c} n}\|=1$, we obtain
\begin{align*}
0<e^{\mathrm{Re}(\overline{b} d)}|f(0)g(0)|
&=\lim_{n\rightarrow\infty}
\left|\left\langle Tk_n,k_{a\overline{c} n}\right\rangle\right|\leqslant\lim_{n\rightarrow\infty}\|Tk_n\|=0,
\end{align*}
a contradiction. Hence $|ac|\neq1$, and consequently
$$|\varphi^{\prime}(z)\psi^{\prime}(z)|=|ac|<1,\qquad z\in\C.$$
This finishes the proof.
\end{proof}

We next characterize isometric and unitary extensions. To begin with, we first give  the Gaussian integral formula (see \cite[Eq.~(2)]{DZ}) that will be used repeatedly in the following:
for $\theta,p,q\in\C$ with $|\theta|<\frac{1}{2}$,
\begin{align}\label{ext-Gaussian-integral}
\int_{\C}e^{\theta z^2+\overline{\theta}\overline{z}^2+pz+q\overline{z}}d\mu(z)
=\frac{1}{\sqrt{1-4|\theta|^2}}
\exp\left\{\frac{pq+\overline{\theta}p^2+\theta q^2}{1-4|\theta|^2}\right\}.
\end{align}
\begin{thm}\label{ext-unitary}
Suppose that $f$ and $g$ are entire functions with $f(0)g(0)\neq0$.
Assume that the natural product $W_{g,\psi}W_{f,\varphi}^*$ has a unique
bounded extension $T$ to $\mathcal F^2$, and that exactly one of
$W_{f,\varphi}$ and $W_{g,\psi}$ is bounded on $\mathcal F^2$.
Then the following assertions are equivalent:\\
$(1)$ $T$ is isometric.\\
$(2)$ $T$ is unitary.\\
$(3)$ There exist $\alpha,\beta,\gamma,\delta\in\C$ with
$|\alpha|,|\gamma|<\frac{1}{2}$ such that
$$f(z)=f(0)e^{\alpha z^2+\beta z},\qquad
g(z)=g(0)e^{\gamma z^2+\delta z}.$$
Moreover, with $\Delta=1-4|\gamma|^2$, the following identities hold for every $z\in\C$:
\begin{equation}\label{ext-unitary-conditions}
\begin{cases}
\Delta=|\varphi^{\prime}(z)\psi^{\prime}(z)|^2>0,\vspace{2mm}\\
\Delta\overline{\alpha}
+\overline{\gamma}\big(\overline{\varphi^{\prime}(z)}\psi^{\prime}(z)\big)^2=0,\vspace{2mm}\\
\Delta\big(\overline{\beta}+\overline{\varphi^{\prime}(z)}\psi(0)\big)+\overline{\varphi^{\prime}(z)}\psi^{\prime}(z)
\Big[\overline{\delta}+\overline{\psi^{\prime}(z)}\varphi(0)
+2\overline{\gamma}\big(\delta+\psi^{\prime}(z)\overline{\varphi(0)}\big)\Big]=0,
\end{cases}
\end{equation}
and
\begin{align}\label{ext-normalization}
\begin{split}
&\frac{|f(0)g(0)|^2
e^{2\mathrm{Re}[\overline{\varphi(0)}\psi(0)]}}{\sqrt{\Delta}}\times\exp\left\{
\frac{|\delta+\psi^{\prime}(z)\overline{\varphi(0)}|^2
+2\mathrm{Re}\big[\overline{\gamma}
\big(\delta+\psi^{\prime}(z)\overline{\varphi(0)}\big)^2\big]}{\Delta}
\right\}=1.
\end{split}
\end{align}
\end{thm}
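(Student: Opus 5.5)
The plan is to turn isometry into a Gram-matrix identity on reproducing kernels and then read off everything from the Gaussian integral \eqref{ext-Gaussian-integral}. By Lemma \ref{ext-domain}, $TK_w=\overline{f(w)}\,g\,e^{\overline{\varphi(w)}\psi}$, and the kernels span a dense subspace. Hence $T$ is isometric if and only if, for all $z,w\in\C$,
\begin{equation*}
\overline{f(w)}f(z)\,G\big(\overline{\varphi(w)},\varphi(z)\big)=e^{\overline{w}z},\qquad G(s,t):=\int_{\C}|g(\zeta)|^2e^{s\psi(\zeta)+t\overline{\psi(\zeta)}}\,d\mu(\zeta).
\end{equation*}
By Theorem \ref{ext-affine}, $\varphi(z)=az+b$ and $\psi(z)=cz+d$ with $ac\neq0$, and $f,g\in\mathcal F^2$. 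I would prove $(1)\Rightarrow(3)$, then $(3)\Rightarrow(2)$; the implication $(2)\Rightarrow(1)$ is trivial.

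\textbf{Step 1: $f$ is a Gaussian exponential.} The right-hand side of the Gram identity never vanishes, so $f$ has no zeros. Fix $w$ with $f(w)\ne0$. Then $f(z)=e^{\overline{w}z}/\big(\overline{f(w)}G(\overline{\varphi(w)},\varphi(z))\big)$ is a zero-free entire function. Since $f\in\mathcal F^2$, it has order at most $2$, so Hadamard's theorem (Theorem \ref{hadamard-factorization}) gives $f=e^{P}$ with $\deg P\le2$. Lemma \ref{ext-exponential-fock} then gives $f(z)=f(0)e^{\alpha z^2+\beta z}$ with $|\alpha|<\frac12$.

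\textbf{Step 2: $g$ is a Gaussian exponential (the main obstacle).} Substituting the form of $f$ into the Gram identity and using that $\varphi$ is an affine bijection, $G(s,t)$ becomes an explicit constant times $\exp$ of a polynomial of degree at most $2$ in $(s,t)$ for all $s,t\in\C$. Now $G$ is the two-sided Laplace transform of the finite positive measure $\nu=\psi_*\big(|g|^2d\mu\big)$. Taking $t=\overline s$, it gives the Fourier--Laplace transform of $\nu$ on all of $\C\cong\R^2$. A measure whose transform equals the exponential of a quadratic form is, by uniqueness of Fourier transforms (after analytic continuation to imaginary arguments), a Gaussian density. Pulling back by the affine map $\psi$ yields $|g(\zeta)|^2e^{-|\zeta|^2}=C\,e^{Q(\zeta)}$ with $Q$ a real quadratic polynomial. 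Consequently $\log|g|$ is a harmonic quadratic, so $g$ is zero-free and $g=g(0)e^{\gamma z^2+\delta z}$, with $|\gamma|<\frac12$ by Lemma \ref{ext-exponential-fock}. I expect this uniqueness step to be the delicate point. I would first try the direct route of comparing Taylor coefficients of $G$, which are the moments of $\nu$, with those of the Gaussian, and check that Carleman's condition holds so that the moment problem is determinate.

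\textbf{Step 3: the identities \eqref{ext-unitary-conditions} and \eqref{ext-normalization}.} With $g$ explicit, compute $\langle TK_w,TK_z\rangle$ by \eqref{ext-Gaussian-integral} with $\theta=\gamma$, $p=\delta+c\overline{\varphi(w)}$, $q=\overline{\delta}+\overline{c}\varphi(z)$ (using $\psi(\zeta)=c\zeta+d$), and require the result to equal $e^{\overline wz}$. The log of each side is a polynomial of degree at most $2$ in $(z,\overline w)$, so we can compare coefficients:
\begin{itemize}
\item the $z\overline w$ coefficient gives $\overline{a}c\cdot a\overline{c}/\Delta=1$, i.e.\ $\Delta=|ac|^2$;
\item the $z^2$ and $\overline w^2$ coefficients give $\Delta\overline\alpha+\overline\gamma(\overline a c)^2=0$ and its conjugate;
\item the linear coefficients give the third line of \eqref{ext-unitary-conditions};
\item the constant term gives \eqref{ext-normalization}.
\end{itemize}
Conversely, the same computation shows that (3) implies $T$ is isometric.

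\textbf{Step 4: isometry implies unitarity.} Under (3), $T^*K_z=\overline{g(z)}\,f\,e^{\overline{\psi(z)}\varphi}$. I would compute $\langle T^*K_z,T^*K_w\rangle$ by \eqref{ext-Gaussian-integral}, now with $\theta=\alpha$, and verify that the analogous coefficient identities follow from \eqref{ext-unitary-conditions} and \eqref{ext-normalization}. Here $\alpha=-\overline\gamma(\overline a c)^2/\Delta$ forces $1-4|\alpha|^2=1-4|\gamma|^2$, since $|ac|^4=\Delta^2$. This gives $TT^*=I$, so $T$ is unitary.
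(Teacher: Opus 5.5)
Your proposal is correct and follows essentially the same route as the paper. That route is: zero-freeness of $f$ from the kernels, then Hadamard and Lemma~\ref{ext-exponential-fock}; identifying $|g|^2e^{-|z|^2}$ as a Gaussian by uniqueness of the Fourier--Laplace transform; comparing coefficients via the Gaussian integral \eqref{ext-Gaussian-integral}; and a Gram computation on $T^*K_z$ to get $TT^*=I$.

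There are two minor differences. You use the full off-diagonal Gram identity, which already gives the transform of the pushforward measure at all complex arguments, so the paper's analytic-continuation step becomes essentially automatic. You also check the constant term for $T^*$ directly, whereas the paper obtains $TT^*=CI$ and then deduces $C=1$ from $CT=TT^*T=T$.
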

\begin{proof}
By Theorem \ref{ext-affine}, we have that $f,g\in\mathcal F^2$ and there exist $a,b,c,d\in\C$ such that
$$\varphi(z)=az+b,\qquad\psi(z)=cz+d,\qquad0<|ac|\leqslant1.$$
Thus $\varphi^{\prime}(z)=a$, $\psi^{\prime}(z)=c$, $\varphi(0)=b$ and $\psi(0)=d$.
We prove $(1)\Rightarrow(3)\Rightarrow(2)\Rightarrow(1)$.

Suppose first that $T$ is isometric. If $f(w)=0$, then
\eqref{ext-kernel} in Lemma \ref{ext-domain} gives $TK_w=0$, a contradiction.
Hence $f$ has no zeros. By Theorem \ref{hadamard-factorization}, we can write $f$ as
$$f(z)=e^{\alpha z^2+\beta z+\eta}$$ for some $\alpha, \beta, \eta\in \C$.
Since $f(0)=e^{\eta}$, applying Lemma \ref{ext-exponential-fock} to $f$ yields
$$f(z)=f(0)e^{\alpha z^2+\beta z},\qquad |\alpha|<\frac{1}{2}.$$

We next prove that $g$ has no zeros by using
$\|TK_w\|^2=\|K_w\|^2=e^{|w|^2}$.
Let
$$H(z)=\frac{|f(0)|^2}{\pi}|g(z)|^2
e^{2\mathrm{Re}(\overline{b}d+c\overline{b}z)-|z|^2}\geqslant0.$$
Substituting the expansion of $\varphi, \psi, f$  and $H$ into \eqref{ext-kernel} gives

\begin{align*}
\|TK_w\|^2&=|e^{\alpha w^2+\beta w}|^2e^{2\mathrm{Re}(\overline a d\overline w)}\int_{\mathbb C}H(z)e^{2\mathrm{Re}(\overline a c\overline w z)}\,dA(z)\\
&=e^{2\mathrm{Re}[\alpha w^2+(\beta+a\overline d)w]}\int_{\mathbb C}H(z)e^{2\mathrm{Re}(\overline a c\overline w z)}\,dA(z).
\end{align*}
Thus we have
\begin{align}\label{ext-H-integral}
\int_{\C}H(z)e^{2\mathrm{Re}(\overline{a}c\overline{w}z)}dA(z)
=\exp\left\{|w|^2-2\mathrm{Re}
\big[\alpha w^2+(\beta+a\overline{d})w\big]\right\},\qquad w\in\C.
\end{align}
We will identify $H$ with a strictly positive Gaussian function.
Put $D=1-4|\alpha|^2>0$. Taking
$\theta=-\alpha$, $p=\sqrt D\,\overline w$ and $q=\sqrt D\,w$
in \eqref{ext-Gaussian-integral}, and then setting $\zeta=\sqrt D\,z$
with $dA(\zeta)=D\,dA(z)$, gives
\begin{align}\label{ext-centered-Gaussian}
\frac{1}{\pi\sqrt D}\int_{\C}
\exp\left\{-\frac{|\zeta|^2+2\mathrm{Re}(\alpha\zeta^2)}{D}
+2\mathrm{Re}(\overline w\zeta)\right\}dA(\zeta)
=e^{|w|^2-2\mathrm{Re}(\alpha w^2)},\ \ w\in \C.
\end{align}
We  make the change of variables $\zeta=\overline{a}cz+\overline{\beta}+\overline{a}d$ in \eqref{ext-centered-Gaussian} so that $dA(\zeta)=|ac|^2dA(z)$. Define
\begin{align*}
G(z)=\frac{|ac|^2}{\pi\sqrt D}\exp\bigg\{
-\frac{|\overline a cz+\overline\beta+\overline a d|^2}{D}-\frac{2\mathrm{Re}\big[\alpha(\overline a cz+\overline\beta+\overline a d)^2\big]}{D}\bigg\}>0.
\end{align*}
Then \eqref{ext-centered-Gaussian}  yields
\begin{align}\label{ext-G-integral}
\int_{\C}G(z)e^{2\mathrm{Re}(\overline a c\overline w z)}dA(z)
=\exp\left\{|w|^2-2\mathrm{Re}
\big[\alpha w^2+(\beta+a\overline d)w\big]\right\},\ \ w\in \C.
\end{align}
Taking $w=0$ in \eqref{ext-H-integral} and \eqref{ext-G-integral} gives $$\int_{\C}H(z)\,dA(z)=\int_{\C}G(z)\,dA(z)=1.$$
For $z=x+\mathrm{i}y$ and $w=\frac{s+\mathrm{i}t}{2a\overline c}$, subtracting \eqref{ext-G-integral} from \eqref{ext-H-integral},
we obtain
$$\int_{\R^2}[H(x+\mathrm{i}y)-G(x+\mathrm{i}y)]e^{sx+ty}dx\,dy=0,
\qquad s,t\in\R.$$
For $s,t\in\mathbb R$, adding \eqref{ext-H-integral}
and \eqref{ext-G-integral} with
$w=\frac{s+\mathrm{i}t}{2a\overline c}$ gives 
\begin{align}\label{estimate:integrable}
\int_{\R^2}[H(x+\mathrm{i}y)+G(x+\mathrm{i}y)]
e^{sx+ty}\,dx\,dy<\infty,
\qquad s,t\in\R.
\end{align}
 Since $H\geqslant0$ and $G>0$, the above integral is absolutely convergent.

Now let $\lambda,\nu\in\C$ and choose $R>0$ such that
$|\lambda|,|\nu|\leqslant R$. Then
$$
|e^{\lambda x+\nu y}|
=e^{\mathrm{Re}(\lambda)x+\mathrm{Re}(\nu)y}
\leqslant e^{R(|x|+|y|)}.
$$
This gives
\begin{align*}
&\int_{\R^2}
\left|
[H(x+\mathrm{i}y)-G(x+\mathrm{i}y)]
e^{\lambda x+\nu y}
\right|\,dx\,dy\\
&\quad\leqslant
\int_{\R^2}[H(x+\mathrm{i}y)+G(x+\mathrm{i}y)]
e^{R(|x|+|y|)}\,dx\,dy.
\end{align*}
Since
$$
e^{R|x|}\leqslant e^{Rx}+e^{-Rx},
\qquad
e^{R|y|}\leqslant e^{Ry}+e^{-Ry},
$$
we have
\begin{align*}
e^{R(|x|+|y|)}
&\leqslant(e^{Rx}+e^{-Rx})(e^{Ry}+e^{-Ry})\\
&=e^{Rx+Ry}+e^{Rx-Ry}+e^{-Rx+Ry}+e^{-Rx-Ry}.
\end{align*}
Thus \eqref{estimate:integrable} yields
$$\int_{\R^2}
\left|[H(x+\mathrm{i}y)-G(x+\mathrm{i}y)]e^{\lambda x+\nu y}\right|\,dx\,dy<\infty.$$

Define
$$
F(\lambda,\nu)=\int_{\R^2}
[H(x+\mathrm{i}y)-G(x+\mathrm{i}y)]
e^{\lambda x+\nu y}\,dx\,dy,
\qquad \lambda,\nu\in\C.
$$
We have already proved that $F(s,t)=0$ for all $s,t\in\R$.
We next prove that this equality holds for all complex parameters.

We first show that $F$ is entire in each parameter separately.
The estimates above also show that for every $R>0$,
$$
\int_{\R^2}[H(x+\mathrm{i}y)+G(x+\mathrm{i}y)]
e^{R(|x|+|y|)}\,dx\,dy<\infty.
$$
Fix $|\lambda|,|\nu|\leqslant R$.  Since
$$|x|\leqslant e^{|x|+|y|},\ \ \ |y|\leqslant e^{|x|+|y|},$$
we have
\begin{align*}
\begin{cases}
\left|x[H(x+\mathrm{i}y)-G(x+\mathrm{i}y)]
e^{\lambda x+\nu y}\right|
\leqslant[H(x+\mathrm{i}y)+G(x+\mathrm{i}y)]
e^{(R+1)(|x|+|y|)},\vspace{2mm}\\
\left|y[H(x+\mathrm{i}y)-G(x+\mathrm{i}y)]
e^{\lambda x+\nu y}\right|
\leqslant[H(x+\mathrm{i}y)+G(x+\mathrm{i}y)]
e^{(R+1)(|x|+|y|)}.
\end{cases}
\end{align*}
The function on the right is independent of $\lambda,\nu$ and is
integrable. Thus we have
\begin{align*}
\begin{cases}
\frac{\partial F}{\partial\lambda}(\lambda,\nu)
=\int_{\R^2}x[H(x+\mathrm{i}y)-G(x+\mathrm{i}y)]
e^{\lambda x+\nu y}\,dx\,dy,\vspace{2mm}\\
\frac{\partial F}{\partial\nu}(\lambda,\nu)
=\int_{\R^2}y[H(x+\mathrm{i}y)-G(x+\mathrm{i}y)]
e^{\lambda x+\nu y}\,dx\,dy.
\end{cases}
\end{align*}
Since $R>0$ is arbitrary, $F$ is entire in $\lambda$ for each
fixed $\nu\in\C$, and entire in $\nu$ for each fixed $\lambda\in\C$.

Now fix $t\in\R$. The entire function
$\lambda\mapsto F(\lambda,t)$ vanishes for every real $\lambda$,
because $F(s,t)=0$ for all $s\in\R$. The identity theorem gives
$$
F(\lambda,t)=0,\qquad \lambda\in\C,\quad t\in\R.
$$
Next, fix $\lambda\in\C$. Similarly, we obtain
$$
F(\lambda,\nu)=0,\qquad \lambda,\nu\in\C.
$$

Taking $\lambda=-\mathrm{i}s$ and $\nu=-\mathrm{i}t$ in the equality $F(\lambda,\nu)=0$, we obtain
$$
\int_{\R^2}[H(x+\mathrm{i}y)-G(x+\mathrm{i}y)]
e^{-\mathrm{i}(sx+ty)}\,dx\,dy=0,
\qquad s,t\in\R.
$$
This says that the Fourier transform of
$ H(x+\mathrm{i}y)-G(x+\mathrm{i}y)$ vanishes everywhere.
Moreover, the integrals of $H$ and $G$ are both equal to $1$, so
$$
\int_{\R^2}|H(x+\mathrm{i}y)-G(x+\mathrm{i}y)|\,dx\,dy
\leqslant\int_{\C}[H(z)+G(z)]\,dA(z)=2.
$$
Thus this function belongs to $L^1(\R^2)$, and the uniqueness
theorem for the Fourier transform gives $H=G$ almost everywhere. Since $H-G$ is continuous, we have
$$
H(z)=G(z),\qquad z\in\C.
$$
Since $G>0$, the definition of $H$ implies that $g$ has no zeros.
Thus we also have
$$g(z)=g(0)e^{\gamma z^2+\delta z},\qquad |\gamma|<\frac{1}{2}.$$

Now, \eqref{ext-kernel} becomes
\begin{align}\label{ext-Gaussian-kernel}
\begin{split}
TK_w(z)=\overline{f(0)}g(0)\exp\big\{&\overline b d+\gamma z^2
+(\delta+c\overline b)z+\overline\alpha\overline w^2+(\overline\beta+\overline a d)\overline w
+\overline a cz\overline w\big\}.
\end{split}
\end{align}
Let $\Delta=1-4|\gamma|^2>0$. For $w,\omega\in\C$, denote
$$p=\delta+c\overline b+\overline a c\overline w,\qquad
q=\overline\delta+\overline c b+a\overline c\omega.$$
Applying \eqref{ext-Gaussian-integral} with $\theta=\gamma$ yields
\begin{align}\label{ext-kernel-inner-product}
\begin{split}
\langle TK_w,TK_\omega\rangle
={}&\frac{|f(0)g(0)|^2e^{2\mathrm{Re}(\overline b d)}}{\sqrt\Delta}
\exp\left\{\begin{aligned}
&\overline\alpha\overline w^2+\alpha\omega^2
+(\overline\beta+\overline a d)\overline w\\
&+(\beta+a\overline d)\omega
+\frac{pq+\overline\gamma p^2+\gamma q^2}{\Delta}
\end{aligned}\right\}.
\end{split}
\end{align}
Since
$$
\langle TK_w,TK_\omega\rangle
=\langle K_w,K_\omega\rangle=e^{\overline w\omega},
$$
the right hand side of \eqref{ext-kernel-inner-product} equals
$e^{\overline w\omega}$. Letting $w=\omega=0$ gives
\eqref{ext-normalization}.

Next,  differentiating \eqref{ext-kernel-inner-product} with
respect to $\overline w$ yields
$$
\omega e^{\overline w\omega}
=
e^{\overline w\omega}
\left[
2\overline\alpha\,\overline w
+\overline\beta+\overline a d
+\frac{\overline a c\,(q+2\overline\gamma p)}{\Delta}
\right],
$$
to obtain
$$
2\overline\alpha\overline w+\overline\beta+\overline a d
+\frac{\overline a c\,(q+2\overline\gamma p)}{\Delta}
=\omega.
$$
Substituting the expressions for $p$ and $q$ and comparing
the constant terms and the coefficients of $\overline w$ and
$\omega$ on both sides, we obtain, respectively,
\begin{align*}
\overline\beta+\overline a d
+\frac{\overline a c[\overline\delta+\overline c b
+2\overline\gamma(\delta+c\overline b)]}{\Delta}=0,\\
\overline\alpha+\frac{\overline\gamma(\overline a c)^2}{\Delta}=0\ \ \ \text{and}\ \ \
\frac{|ac|^2}{\Delta}=1.
\end{align*}
Multiplying these equalities by $\Delta$ and recalling that
$\varphi^{\prime}(z)=a$, $\psi^{\prime}(z)=c$,
$\varphi(0)=b$ and $\psi(0)=d$, we obtain
\eqref{ext-unitary-conditions}. This proves $(1)\Rightarrow(3)$.

Suppose next that $(3)$ holds. Substituting
\eqref{ext-unitary-conditions} and \eqref{ext-normalization} into
\eqref{ext-kernel-inner-product} implies that
$$\langle TK_w,TK_\omega\rangle=\langle K_w,K_\omega\rangle,
\qquad w,\omega\in\C.$$
Since $E:=\operatorname{span}\{K_w:\ w\in \C\}$ is dense in $\mathcal F^2$ and $T$ is
bounded, $T$ is isometric and $T^*T=I$.
It remains to prove that $T$ is onto.

Write
$$\lambda=\overline a c,\qquad u=\delta+c\overline b,\qquad
v=\overline\beta+\overline a d.$$
Then the relations in \eqref{ext-unitary-conditions} become
$$\Delta=|\lambda|^2,\qquad
\Delta\overline\alpha+\overline\gamma\lambda^2=0,\qquad
\overline u+2\overline\gamma u=-\overline\lambda v.$$
The first two imply $|\alpha|=|\gamma|$, hence $1-4|\alpha|^2=\Delta$,
and
$$\overline\gamma+\frac{\overline\alpha\overline\lambda^2}{\Delta}=0.$$
Combining $\overline u+2\overline\gamma u=-\overline\lambda v$ with its complex conjugate gives
$$\Delta\overline u=-\overline\lambda v+2\overline\gamma\lambda\overline v
=-\overline\lambda(v+2\overline\alpha\overline v),$$
so that
$$\overline u+\frac{\overline\lambda(v+2\overline\alpha\overline v)}{\Delta}=0.$$
On the other hand, the reproducing property and the definition of the adjoint give
\begin{align*}
(T^*K_w)(z)
&=\langle T^*K_w,K_z\rangle
=\langle K_w,TK_z\rangle\\
&=\overline{\langle TK_z,K_w\rangle}
=\overline{(TK_z)(w)},\qquad z,w\in\C.
\end{align*}
Interchanging $z$ and $w$ in \eqref{ext-Gaussian-kernel}
and using the definitions of $\lambda,u,v$, we have
$$
(TK_z)(w)=\overline{f(0)}g(0)
\exp\{\overline b d+\gamma w^2+uw
+\overline\alpha\overline z^2+v\overline z
+\lambda w\overline z\}.
$$
Taking the complex conjugate of this expression, we obtain
\begin{align*}
T^*K_w(z)=f(0)\overline{g(0)}
\exp\{b\overline d+\alpha z^2+\overline v z
+\overline\gamma\overline w^2+\overline u\overline w
+\overline\lambda z\overline w\}.
\end{align*}
Apply \eqref{ext-Gaussian-integral} with
$$\theta=\alpha,\qquad p=\overline v+\overline\lambda\overline w,
\qquad q=v+\lambda\omega.$$
The preceding relations make the coefficients of $\overline w^2$,
$\omega^2$, $\overline w$ and $\omega$ vanish, while the coefficient
of $\overline w\omega$ is $\frac{|\lambda|^2}{\Delta}=1$. Therefore
$$\langle T^*K_w,T^*K_\omega\rangle=C e^{\overline w\omega},
\qquad C=\|T^*K_0\|^2>0.$$
By density of the kernel span, this identity gives $TT^*=CI$.
Since $T^*T=I$, we have $$CT=(TT^*)T=T,$$ so $C=1$.
Thus $TT^*=I$, and $T$ is onto and hence unitary.
This proves $(3)\Rightarrow(2)$. Since $(2)\Rightarrow(1)$ is immediate, we finish the  proof.
\end{proof}

\begin{thm}\label{ext-invertible}
Suppose that $f$ and $g$ are entire functions with $f(0)g(0)\neq0$.
Assume that the natural product $W_{g,\psi}W_{f,\varphi}^*$ has a unique
bounded extension $T$ to $\mathcal F^2$, and that exactly one of
$W_{f,\varphi}$ and $W_{g,\psi}$ is bounded on $\mathcal F^2$.
Then the following assertions are equivalent:
\begin{enumerate}
\item[$(1)$] $T$ is invertible.
\item[$(2)$] $T$ is a nonzero scalar multiple of a unitary operator.
\item[$(3)$] There exist $\alpha,\beta,\gamma,\delta\in\C$ with
$|\alpha|,|\gamma|<\frac{1}{2}$ such that
$$f(z)=f(0)e^{\alpha z^2+\beta z},\qquad
g(z)=g(0)e^{\gamma z^2+\delta z}.$$
With $\Delta=1-4|\gamma|^2$, the identities in
\eqref{ext-unitary-conditions} hold for every $z\in\C$.
\end{enumerate}
\end{thm}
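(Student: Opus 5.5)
The proof will follow the cycle $(1)\Rightarrow(3)\Rightarrow(2)\Rightarrow(1)$, reusing the machinery from the proof of Theorem \ref{ext-unitary}. The implication $(2)\Rightarrow(1)$ is immediate, since a nonzero multiple of a unitary operator is invertible. For $(3)\Rightarrow(2)$, we will not use \eqref{ext-normalization}. Instead we recompute $\langle TK_w,TK_\omega\rangle$ via \eqref{ext-kernel-inner-product}. Under \eqref{ext-unitary-conditions}, the coefficients of $\overline w^2,\omega^2,\overline w,\omega$ in the exponent vanish and the coefficient of $\overline w\omega$ equals $|ac|^2/\Delta=1$. Hence $\langle TK_w,TK_\omega\rangle=C_1e^{\overline w\omega}$ with $C_1=\|TK_0\|^2>0$. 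By density of the kernel span, $T^*T=C_1I$. The computation in the proof of Theorem \ref{ext-unitary} for $T^*K_w$ uses only \eqref{ext-unitary-conditions}, not the normalization. It therefore gives $TT^*=C_2I$ with $C_2>0$, and from $C_1T=TT^*T=C_2T$ we get $C_1=C_2$. Then $C_1^{-1/2}T$ is unitary.

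The main work is $(1)\Rightarrow(3)$. Assume $T$ is invertible. First, $f$ has no zeros: if $f(w)=0$ then $TK_w=0$ by \eqref{ext-kernel}, contradicting injectivity. Since $f\in\mathcal F^2$ (Theorem \ref{ext-affine}) and $f$ is zero-free of order at most $2$, Hadamard's theorem and Lemma \ref{ext-exponential-fock} give $f(z)=f(0)e^{\alpha z^2+\beta z}$ with $|\alpha|<\frac12$. For $g$, we use $T^*$, which is also invertible. As computed in the proof of Theorem \ref{ext-unitary}, $T^*K_w(z)=\overline{TK_z(w)}=f(z)\overline{g(w)}e^{\varphi(z)\overline{\psi(w)}}$. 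If $g(w)=0$ then $T^*K_w=0$, contradicting injectivity of $T^*$. Thus $g(z)=g(0)e^{\gamma z^2+\delta z}$ with $|\gamma|<\frac12$.

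It remains to derive \eqref{ext-unitary-conditions} from invertibility, and this is the main obstacle, since we no longer have $\langle TK_w,TK_\omega\rangle=e^{\overline w\omega}$. The plan is to rewrite $T$ as $C\,U_1\,\mathcal M$, where $U_1$ is unitary and $\mathcal M$ is an explicit Gaussian-kernel operator. Concretely, \eqref{ext-Gaussian-kernel} shows that $T$ is an integral operator with kernel $\exp\{\gamma z^2+\overline\alpha\overline w^2+uz+v\overline w+\lambda z\overline w\}$, where $\lambda=\overline ac$. Composing with suitable unitary weighted composition operators from Theorem \ref{L6} removes the linear terms $u,v$ and normalizes $\lambda$ to $|\lambda|$. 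The operator $T^*T$ is again a kernel operator of the same Gaussian type, by \eqref{ext-kernel-inner-product}. Its exponent $Q(\overline w,\omega)$ is a quadratic polynomial in $\overline w$ and $\omega$, and it must coincide with the exponent of an invertible positive operator.

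The key tool is this: the kernel operator with kernel $e^{A\overline w^2+\overline A\omega^2+B\overline w\omega+\cdots}$ is invertible only if $A=0$ and $|B|=1$. We will try to prove this by testing on normalized kernels $k_{w}$ as $|w|\to\infty$. If $A\neq0$ or $|B|\neq1$, then $\|T k_w\|^2=e^{Q(\overline w,w)-|w|^2}$ tends to $0$ along some ray. This contradicts boundedness below, since $\|Tk_w\|\geqslant\|T^{-1}\|^{-1}$ for all $w$. The quantity $Q(\overline w,w)-|w|^2=(B-1)|w|^2+2\mathrm{Re}(A\overline w^2)+\text{linear}$ stays bounded below on all of $\C$ exactly when $B=1$ and $A=0$; a remaining linear term would again tend to $-\infty$ along some ray, so it must vanish too. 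Comparing coefficients in $Q$ from \eqref{ext-kernel-inner-product} then yields $|ac|^2=\Delta$, $\Delta\overline\alpha+\overline\gamma(\overline ac)^2=0$, and the linear relation. These are exactly \eqref{ext-unitary-conditions}.
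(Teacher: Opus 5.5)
\textbf{Gap: one-sided bound in $(1)\Rightarrow(3)$.} In the last step of $(1)\Rightarrow(3)$ you use only the lower bound $\|Tk_w\|\geqslant\|T^{-1}\|^{-1}$. From it you claim that $(B-1)|w|^2+2\mathrm{Re}(A\overline w^2)+(\text{linear terms})$ is bounded below on $\C$ exactly when $B=1$ and $A=0$, where $B=|ac|^2/\Delta$ and $A=\overline\alpha+\overline\gamma(\overline ac)^2/\Delta$. This claim is false. Writing $w=re^{\mathrm{i}\phi}$, the quadratic part equals $r^2\big[(B-1)+2|A|\cos(\arg A-2\phi)\big]$, so boundedness below only forces $B-1\geqslant 2|A|$.

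For example, if $A=0$ and $B>1$, then $\|Tk_w\|\to\infty$ along every ray. Your assertion that $\|Tk_w\|$ tends to $0$ along some ray fails, and there is no contradiction with boundedness below. Nor is $B>1$ excluded a priori by $|ac|\leqslant1$ from Theorem \ref{ext-affine}, since $\Delta=1-4|\gamma|^2$ may be smaller than $|ac|^2$.

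The missing ingredient is the upper bound $\|Tk_w\|\leqslant\|T\|$, which comes from boundedness of $T$ and gives $B-1\leqslant-2|A|$. With both bounds, the exponent $Q$ in \eqref{ext-invertible-kernel-norm} is a real polynomial of degree at most $2$ that is bounded on $\R^2$. Hence $Q\equiv Q(0)=0$, and comparing coefficients yields \eqref{ext-unitary-conditions}. This two-sided estimate is exactly \eqref{ext-invertible-kernel-bounds} in the paper. Your handling of the linear term, once $A=0$ and $B=1$ are known, is fine.

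\textbf{The rest of the proposal.} Your route otherwise coincides with the paper's. Zero-freeness of $f$ follows from injectivity of $T$, and of $g$ from injectivity of $T^*$ via $(T^*K_w)(z)=\overline{(TK_z)(w)}$; then the kernel-norm test is applied. The factorization $T=C\,U_1\mathcal M$ you sketch is never used and can be dropped.

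Your $(3)\Rightarrow(2)$ is a valid variant of the paper's. The paper rescales $g$ by $\|TK_0\|$ and invokes Theorem \ref{ext-unitary}, which forces it to check that the rescaled product still satisfies the standing hypotheses. You instead get $T^*T=C_1I$ and $TT^*=C_2I$ directly from the parameter relations, without using \eqref{ext-normalization}. You then deduce $C_1=C_2$ from $C_1T=TT^*T=C_2T$, which is correct and self-contained.
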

\begin{proof}
By Theorem \ref{ext-affine}, $f,g\in\mathcal F^2$ and there exist $a,b,c,d\in\C$ such that
$$\varphi(z)=az+b,\qquad\psi(z)=cz+d,\qquad0<|ac|\leqslant1.$$
Thus $\varphi^{\prime}(z)=a$, $\psi^{\prime}(z)=c$, $\varphi(0)=b$ and $\psi(0)=d$.
We prove $(1)\Rightarrow(3)\Rightarrow(2)\Rightarrow(1)$.
To prove $(1)\Rightarrow(3)$, suppose that $T$ is invertible.

The kernel formula \eqref{ext-kernel} in Lemma \ref{ext-domain} shows
that $f$ has no zero.
We next show that $g$ has no zeros. Suppose that $g(z_0)=0$
for some $z_0\in\C$. Using
$(T^*K_{z_0})(w)=\overline{(TK_w)(z_0)}$
and the kernel formula \eqref{ext-kernel}, we obtain
\begin{align*}
(T^*K_{z_0})(w)
&=\overline{(TK_w)(z_0)}=f(w)\overline{g(z_0)}
e^{\varphi(w)\overline{\psi(z_0)}}=0,
\qquad w\in\C.
\end{align*}
Thus $T^*K_{z_0}=0$. It follows that $K_{z_0}=0$, since $T^*$ is invertible and hence is injective. This is impossible because
$K_{z_0}(0)=1$. Therefore, both $f$ and $g$ have no zeros.
Thus $f$ and $g$ admit the forms
\begin{align}\label{ext-invertible-weights}
f(z)=f(0)e^{\alpha z^2+\beta z},\qquad
g(z)=g(0)e^{\gamma z^2+\delta z},\qquad
|\alpha|,|\gamma|<\frac{1}{2}.
\end{align}
Let $\Delta=1-4|\gamma|^2>0$. For the weights in
\eqref{ext-invertible-weights}, the inner product identity
\eqref{ext-kernel-inner-product} applies. Setting $w=\omega=0$
in \eqref{ext-kernel-inner-product}, we have
\begin{align}\label{ext-invertible-scale}
\begin{split}
\|TK_0\|^2
={}&\frac{|f(0)g(0)|^2e^{2\mathrm{Re}(\overline{b}d)}}{\sqrt{\Delta}}\exp\left\{\frac{|\delta+c\overline{b}|^2+
2\mathrm{Re}\big[\overline{\gamma}(\delta+c\overline{b})^2\big]}
{\Delta}\right\}>0.
\end{split}
\end{align}
Taking $\omega=w$ in \eqref{ext-kernel-inner-product} and using
$k_w=e^{-\frac{|w|^2}{2}}K_w$, we obtain
\begin{align}\label{ext-invertible-kernel-norm}
\|Tk_w\|^2=\|TK_0\|^2e^{Q(w)},\qquad w\in\C,
\end{align}
where
\begin{align}\label{ext-invertible-quadratic}
\begin{split}
Q(w)={}&\left(\frac{|ac|^2}{\Delta}-1\right)|w|^2
+2\mathrm{Re}\left[
\left(\overline{\alpha}
+\frac{\overline{\gamma}(\overline{a}c)^2}{\Delta}\right)
\overline{w}^2\right]\\
&+2\mathrm{Re}\left[\left(
\overline{\beta}+\overline{a}d
+\frac{\overline{a}c\big[\overline{\delta}+\overline{c}b
+2\overline{\gamma}(\delta+c\overline{b})\big]}{\Delta}
\right)\overline{w}\right].
\end{split}
\end{align}
In particular, $Q$ is a real polynomial in $\mathrm{Re}\,w$
and $\mathrm{Im}\,w$ of degree at most $2$, and $Q(0)=0$.

Since $\|k_w\|=1$, we have
$$1=\|T^{-1}Tk_w\|\leqslant\|T^{-1}\|\|Tk_w\|,$$
while $\|Tk_w\|\leqslant\|T\|$. Consequently,
\begin{align}\label{ext-invertible-kernel-bounds}
\frac{1}{\|T^{-1}\|^2}\leqslant\|Tk_w\|^2\leqslant\|T\|^2,
\qquad w\in\C.
\end{align}
Combining \eqref{ext-invertible-kernel-norm} and
\eqref{ext-invertible-kernel-bounds}, and taking logarithms, we have
$$\log\frac{1}{\|TK_0\|^2\|T^{-1}\|^2}
\leqslant Q(w)\leqslant\log\frac{\|T\|^2}{\|TK_0\|^2},
\qquad w\in\C.$$
Note that a real polynomial bounded above and below on $\R^2$ is constant. Since $Q(0)=0$, it follows that $Q\equiv0$.
Writing $w=x+\mathrm{i}y$ in \eqref{ext-invertible-quadratic} and comparing
coefficients in $Q(x+\mathrm{i}y)=0$, we obtain
\begin{align*}
\frac{|ac|^2}{\Delta}-1=0,\ \ \
\overline{\alpha}
+\frac{\overline{\gamma}(\overline{a}c)^2}{\Delta}=0,
\end{align*}
and
\begin{align*}
\overline{\beta}+\overline{a}d
+\frac{\overline{a}c\big[\overline{\delta}+\overline{c}b
+2\overline{\gamma}(\delta+c\overline{b})\big]}{\Delta}=0.
\end{align*}
Multiplying by $\Delta$ gives precisely
\eqref{ext-unitary-conditions}. Thus $(1)\Rightarrow(3)$.

We next prove $(3)\Rightarrow(2)$. Denote $m=\|TK_0\|>0$ and
$\widetilde g=\frac{g}{m}$. The parameter relations
in \eqref{ext-unitary-conditions} still hold. By
\eqref{ext-invertible-scale}, the left hand side of
\eqref{ext-normalization} for $f,\widetilde g$ is
$$\frac{\|TK_0\|^2}{m^2}=1.$$
Moreover, $W_{\widetilde g,\psi}=m^{-1}W_{g,\psi}$ has the same domain
as $W_{g,\psi}$, and its natural product with $W_{f,\varphi}^*$
has bounded extension $m^{-1}T$. The assumption that exactly one
factor is bounded is also preserved.
Theorem \ref{ext-unitary} therefore shows that $m^{-1}T$ is unitary.
Thus $T$ is a nonzero scalar multiple of a unitary operator.

Finally, suppose that $(2)$ holds. Write $T=\kappa U$ with $\kappa\neq0$ and $U$ unitary. Then $T$ is invertible, with
$$T^{-1}=\frac{1}{\kappa}U^*.$$
This proves $(2)\Rightarrow(1)$ and completes the proof.
\end{proof}

The next example shows that the condition $|ac|=1$ in Theorem \ref{I} is no longer necessary for invertibility when an unbounded factor is allowed.

\begin{exam}\label{ext-squeezing}
Let $0<t<1$ and $s=\sqrt{1-t^2}$ and define
$$f(z)=e^{-\frac{t}{2}z^2},\qquad
g(z)=\sqrt{s}\,e^{\frac{t}{2}z^2},$$
$$\varphi(z)=\sqrt{1-t}\,z,\qquad
\psi(z)=\sqrt{1+t}\,z.$$
Consider $W_{f,\varphi}$ and $W_{g,\psi}$ on their maximal domains.
Since $t<1$, the estimates
$$|f(z)|^2e^{-|z|^2}\leqslant e^{-(1-t)|z|^2}\ \ \ \text{and}\ \ \
|g(z)|^2e^{-|z|^2}\leqslant s e^{-(1-t)|z|^2}$$
imply $f,g\in\mathcal F^2$, and $f(0)g(0)=\sqrt{s}>0$.
Moreover,
\begin{align*}
|f(z)|^2e^{|\varphi(z)|^2-|z|^2}
&=e^{-t\mathrm{Re}(z^2)-t|z|^2}
=e^{-2t[\mathrm{Re}\,(z)]^2}\leqslant1.
\end{align*}
Thus $W_{f,\varphi}$ is bounded by Theorem \ref{L1}.

For every polynomial $P$, we have
$$\|g\cdot(P\circ\psi)\|^2
\leqslant\frac{s}{\pi}\int_{\C}
|P(\sqrt{1+t}\,z)|^2e^{-(1-t)|z|^2}dA(z)<\infty.$$
Hence all polynomials belong to $D(W_{g,\psi})$ and $W_{g,\psi}$
is densely defined. For $x\in\R$,
$$\lim_{x\rightarrow\infty}|g(x)|^2e^{|\psi(x)|^2-|x|^2}
=\lim_{x\rightarrow\infty}s e^{2tx^2}=\infty.$$
Theorem \ref{L1} therefore shows that $W_{g,\psi}$ is unbounded.

Let $T_0=W_{g,\psi}W_{f,\varphi}^*$ on its natural domain.
The reproducing property gives
$$W_{f,\varphi}^*K_w
=\overline{f(w)}K_{\varphi(w)}
=e^{-\frac{t}{2}\overline{w}^2}K_{\sqrt{1-t}\,w}.$$
For each $w\in\C$, formula \eqref{ext-Gaussian-integral} with
$\theta=\frac{t}{2}$, $p=s\overline{w}$ and $q=sw$ gives
\begin{align*}
\|g\cdot(K_{\varphi(w)}\circ\psi)\|^2
&=s\int_{\C}e^{\frac{t}{2}z^2+\frac{t}{2}\overline{z}^2
+s\overline{w}z+sw\overline{z}}d\mu(z)
=e^{|w|^2+t\mathrm{Re}(w^2)}<\infty.
\end{align*}
It follows that $K_{\varphi(w)}\in D(W_{g,\psi})$, and hence
$W_{f,\varphi}^*K_w=\overline{f(w)}K_{\varphi(w)}\in D(W_{g,\psi})$.
Since $W_{f,\varphi}^*$ is bounded, the natural product has domain
$$D(T_0)=D(W_{g,\psi}W_{f,\varphi}^*)
=\{h\in\mathcal F^2:
W_{f,\varphi}^{*}h\in D(W_{g,\psi})\}.$$
Therefore $K_w\in D(T_0)$, and
\begin{align}\label{ext-squeezing-kernel}
T_0K_w(z)=\sqrt{s}\exp\left\{\frac{t}{2}z^2
-\frac{t}{2}\overline{w}^2+s z\overline{w}\right\}.
\end{align}
For $w,\omega\in\C$, applying \eqref{ext-Gaussian-integral} to
\eqref{ext-squeezing-kernel} with
$$\theta=\frac{t}{2},\qquad p=s\overline{w},\qquad q=s\omega,$$
and using $1-t^2=s^2$, we obtain
\begin{align*}
\langle T_0K_w,T_0K_\omega\rangle
&=s e^{-\frac{t}{2}(\overline{w}^2+\omega^2)}
\int_{\C}e^{\frac{t}{2}z^2+\frac{t}{2}\overline{z}^2
+s\overline{w}z+s\omega\overline{z}}d\mu(z)\notag\\
&=\frac{s}{\sqrt{1-t^2}}\exp\left\{
-\frac{t}{2}(\overline{w}^2+\omega^2)
+\frac{s^2\overline{w}\omega+
\frac{ts^2}{2}(\overline{w}^2+\omega^2)}{1-t^2}\right\}\notag\\
&=e^{\overline{w}\omega}=\langle K_w,K_\omega\rangle.
\end{align*}
Thus
$$\|T_0h\|=\|h\|,\qquad h\in E,$$ where $E=\operatorname{span}\{K_w:w\in\C\}.$
Therefore, $T_0|_E$ extends uniquely to a bounded isometry $T$
on $\mathcal F^2$. We first verify that $T$ extends $T_0$ on its
natural domain. For $h\in D(T_0)$, choose $h_n\in E$ such that
$\lim\limits_{n\rightarrow\infty}\|h_n-h\|=0$.
Boundedness of $T$ and $W_{f,\varphi}^*$ implies
\begin{align*}
(Th)(z)
&=\lim_{n\rightarrow\infty}(T_0h_n)(z)\\
&=g(z)\lim_{n\rightarrow\infty}(W_{f,\varphi}^*h_n)(\psi(z))\\
&=g(z)(W_{f,\varphi}^*h)(\psi(z))=(T_0h)(z),\ \ \ z\in \C.
\end{align*}
Since $E\subseteq D(T_0)$ is dense, $T$ is the unique bounded
extension of $T_0$. Lemma \ref{ext-domain} now gives
$$D(T_0)=D(W_{f,\varphi}^*)=\mathcal F^2,$$
so $T_0=T$. Applying $(1)\Rightarrow(2)$ in Theorem \ref{ext-unitary},
we conclude that $T$ is unitary, and therefore invertible, whereas
$$|\varphi^{\prime}(z)\psi^{\prime}(z)|
=\sqrt{1-t}\sqrt{1+t}=s<1,\qquad z\in\C,$$
finishing the proof.
\end{exam}

To discuss in the following  when the operator \(W_{g,\psi}W_{f,\varphi}^{*}\) becomes a Hilbert--Schmidt operator, we first review some preliminaries.

Let $e_n(z)=\frac{z^n}{\sqrt{n!}}$ $(n=0,1,2,\cdots)$ be the standard
orthonormal basis of $\mathcal F^2$. A bounded operator $T$ on
$\mathcal F^2$ is called Hilbert--Schmidt if
$$\sum_{n=0}^{\infty}\|Te_n\|^2<\infty.$$
The class of these operators is denoted by $\mathcal S_2(\mathcal F^2)$,
and its norm is defined by
$$\|T\|_{\mathcal S_2}
=\left(\sum_{n=0}^{\infty}\|Te_n\|^2\right)^{\frac{1}{2}}.$$
This sum is independent of the choice of orthonormal basis.
Every Hilbert--Schmidt operator is compact.

For a nonzero $h\in\mathcal F^2$, the estimate
$|h(z)|\leqslant\|h\|e^{\frac{|z|^2}{2}}$ gives $\rho(h)\leqslant2$.
Using the order $\rho(h)$ and type $\sigma(h)$ defined in
Section \ref{S2}, we write
$$\sigma_2(h)=\limsup_{r\rightarrow\infty}\frac{\log M_h(r)}{r^2}
=\begin{cases}
0,&\rho(h)<2,\vspace{2mm}\\
\sigma(h),&\rho(h)=2.
\end{cases}$$
Thus $0\leqslant\sigma_2(h)\leqslant\frac{1}{2}$.

\begin{prop}\label{ext-HS}
Suppose that $f$ and $g$ are entire functions with $f(0)g(0)\neq0$.
Assume that the natural product $W_{g,\psi}W_{f,\varphi}^*$ has a unique
bounded extension $T$ to $\mathcal F^2$, and that exactly one of
$W_{f,\varphi}$ and $W_{g,\psi}$ is bounded on $\mathcal F^2$.
Then the following assertions hold.\\
$(1)$ $T$ is Hilbert--Schmidt if and only if
\begin{align}\label{ext-HS-integral}
\iint_{\C^2}|f(w)|^2|g(z)|^2
e^{2\mathrm{Re}(\overline{\varphi(w)}\psi(z))}d\mu(z)d\mu(w)<\infty.
\end{align}
If \eqref{ext-HS-integral} holds, the integral in
\eqref{ext-HS-integral} equals $\|T\|_{\mathcal S_2}^2$.\\
$(2)$ If
\begin{align}\label{ext-type-condition}
(1-2\sigma_2(f))(1-2\sigma_2(g))>|\varphi^{\prime}(z)\psi^{\prime}(z)|^2,\qquad z\in\C,
\end{align}
then $T$ is Hilbert--Schmidt and hence compact.
\end{prop}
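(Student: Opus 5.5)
For part (1), I will compute the Hilbert--Schmidt norm through the reproducing kernel rather than an explicit basis. For any bounded $T$ on $\mathcal F^2$ and any orthonormal basis $\{e_n\}$, Parseval in the $z$-variable gives $\sum_n|Te_n(z)|^2=\sum_n|\langle e_n,T^*K_z\rangle|^2=\|T^*K_z\|^2$. Integrating against $d\mu(z)$ and using Tonelli,
$$\|T\|_{\mathcal S_2}^2=\int_{\C}\|T^*K_z\|^2\,d\mu(z),$$
both sides being finite or infinite simultaneously. Next, $(T^*K_z)(w)=\overline{(TK_w)(z)}$, which follows from the reproducing property as in the proof of Theorem \ref{ext-unitary}. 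By Lemma \ref{ext-domain} the latter equals $f(w)\overline{g(z)}e^{\varphi(w)\overline{\psi(z)}}$, so
$$\|T^*K_z\|^2=\int_{\C}|f(w)|^2|g(z)|^2e^{2\mathrm{Re}(\overline{\varphi(w)}\psi(z))}\,d\mu(w).$$
Substituting and applying Tonelli once more yields the double integral \eqref{ext-HS-integral}. This establishes the equivalence and the norm identity at once. No domain issues arise, since $T$ is already bounded on all of $\mathcal F^2$.

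For part (2), Theorem \ref{ext-affine} gives $\varphi(w)=aw+b$ and $\psi(z)=cz+d$. I will bound the integrand via growth. Fix $\varepsilon>0$. By the definition of $\sigma_2$ there is a constant $C_\varepsilon$ such that
$$|f(w)|\leqslant C_\varepsilon e^{(\sigma_2(f)+\varepsilon)|w|^2},\qquad |g(z)|\leqslant C_\varepsilon e^{(\sigma_2(g)+\varepsilon)|z|^2}.$$
Also $2\mathrm{Re}(\overline{\varphi(w)}\psi(z))\leqslant 2|ac||w||z|+C(|w|+|z|)+C$. With $r=|w|$ and $s=|z|$, the integral is therefore dominated by a constant times
$$\int_0^\infty\!\!\int_0^\infty rs\exp\{-A r^2-Bs^2+2|ac|rs+C(r+s)\}\,dr\,ds,$$
where $A=1-2\sigma_2(f)-2\varepsilon$ and $B=1-2\sigma_2(g)-2\varepsilon$. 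This is finite as soon as the quadratic form $Ar^2-2|ac|rs+Bs^2$ is positive definite, that is, $A,B>0$ and $AB>|ac|^2$.

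Hypothesis \eqref{ext-type-condition} says $(1-2\sigma_2(f))(1-2\sigma_2(g))>|ac|^2$. I must also check that each factor is positive. Both factors are $\geqslant0$, and their product exceeds $|ac|^2>0$ because $ac\neq0$ by Theorem \ref{ext-affine}; hence both are strictly positive. Choosing $\varepsilon$ small enough then keeps $A,B>0$ and $AB>|ac|^2$ by continuity. Part (1) now shows $T$ is Hilbert--Schmidt, and hence compact.

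The main obstacle is mild. One point is justifying the Tonelli/Parseval identity for $\|T\|_{\mathcal S_2}$ without assuming a priori finiteness, which works because all terms are nonnegative. The other is the elementary positivity of the factors in \eqref{ext-type-condition}, handled above via $|ac|>0$.
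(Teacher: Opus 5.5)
Your proposal is correct and follows essentially the paper's argument: part (1) goes through Parseval, Tonelli and the identity $(T^*K_z)(w)=\overline{(TK_w)(z)}$ combined with the kernel formula from Lemma \ref{ext-domain}, and part (2) combines the $\sigma_2$ growth bounds with Gaussian integrability. The differences are cosmetic. You bound the cross term by $2|ac||w||z|$ and use positive-definiteness of $Ar^2-2|ac|rs+Bs^2$, whereas the paper completes the square exactly in the complex variables; both give the same condition $pq>|ac|^2$. Your explicit check that both factors $1-2\sigma_2(\cdot)$ are strictly positive fills in a step the paper leaves implicit.
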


\begin{proof}
By Theorem \ref{ext-affine}, $f,g\in\mathcal F^2$ and there exist $a,b,c,d\in\C$ such that
$$\varphi(z)=az+b,\qquad\psi(z)=cz+d,\qquad0<|ac|\leqslant1.$$
Consequently, $|\varphi^{\prime}(z)\psi^{\prime}(z)|=|ac|$ for every $z\in\C$.

We first prove $(1)$.
Recall that
$$(T^*K_z)(w)=\langle T^*K_z,K_w\rangle
=\langle K_z,TK_w\rangle=\overline{(TK_w)(z)}.$$
For each $z\in\C$, Parseval's identity therefore gives
\begin{align*}
\sum_{n=0}^{\infty}|(Te_n)(z)|^2
&=\sum_{n=0}^{\infty}|\langle e_n,T^*K_z\rangle|^2
=\|T^*K_z\|^2\\
&=\int_{\C}|(TK_w)(z)|^2d\mu(w).
\end{align*}
Integrating in $z$ and applying Tonelli's theorem,  we obtain
\begin{align}\label{ext-HS-kernel-identity}
\begin{split}
\sum_{n=0}^{\infty}\|Te_n\|^2
&=\iint_{\C^2}|(TK_w)(z)|^2d\mu(z)d\mu(w)\\
&=\int_{\C}\|TK_w\|^2d\mu(w)
=\frac{1}{\pi}\int_{\C}\|Tk_w\|^2dA(w)\\
&=\iint_{\C^2}|f(w)|^2|g(z)|^2
e^{2\mathrm{Re}(\overline{\varphi(w)}\psi(z))}d\mu(z)d\mu(w).
\end{split}
\end{align}
Thus $(1)$ holds.

We now prove $(2)$. Assume that \eqref{ext-type-condition} holds,
and let $\mathcal I$ denote the integral in \eqref{ext-HS-integral}.
By $(1)$, it suffices to prove that $\mathcal I<\infty$.
Choose $\varepsilon>0$ small enough such that
\begin{align}\label{ext-HS-decay-parameters}
p=1-2\sigma_2(f)-2\varepsilon>0,\qquad
q=1-2\sigma_2(g)-2\varepsilon>0,\qquad pq>|ac|^2.
\end{align}
By the definition of $\sigma_2$,
$$\sigma_2(f)=\limsup_{r\rightarrow\infty}\frac{\log M_f(r)}{r^2}.$$
Thus, for any $\varepsilon>0$, there exists $R_f\in \N$ such that
$$ \frac{\log M_f(r)}{r^2}\leqslant \sigma_2(f)+\varepsilon,\ \ \ r\geqslant R_f.$$
It follows that
$$\log M_f(r)\leqslant (\sigma_2(f)+\varepsilon)r^2,$$
which is equivalent to
$$M_f(r)\leqslant e^{(\sigma_2(f)+\varepsilon)r^2}.$$
Thus if $|w|> R_f$, we have
$$
|f(w)|\leqslant M_f(|w|)
\leqslant e^{(\sigma_2(f)+\varepsilon)|w|^2}.$$
Since $f$ is continuous on the closed disk $\{w\in\C:|w|\leqslant R_f\}$,
we can take
$$C_f=\max\Big\{1,\max_{|w|\leqslant R_f}|f(w)|\Big\}<\infty.$$
Note that $\sigma_2(f)+\varepsilon>0$. For $|w|\leqslant R_f$, we have
$$|f(w)|\leqslant C_f
\leqslant C_f e^{(\sigma_2(f)+\varepsilon)|w|^2}.$$
For $|w|>R_f$, the estimate above and $C_f\geqslant1$ give
$$|f(w)|\leqslant e^{(\sigma_2(f)+\varepsilon)|w|^2}
\leqslant C_f e^{(\sigma_2(f)+\varepsilon)|w|^2}.$$
Thus $$|f(w)|\leqslant C_f e^{(\sigma_2(f)+\varepsilon)|w|^2},\ \ \ w\in\C.$$
Applying the same argument to $g$, we obtain a constant $C_g>0$.
Consequently,
\begin{align}\label{ext-HS-growth-bounds}
|f(w)|\leqslant C_f e^{(\sigma_2(f)+\varepsilon)|w|^2},\qquad
|g(w)|\leqslant C_g e^{(\sigma_2(g)+\varepsilon)|w|^2},
\qquad w\in\C.
\end{align}
Since
$$\overline{\varphi(w)}\psi(z)
=\overline{a}c\overline{w}z+\overline{a}d\overline{w}
+c\overline{b}z+\overline{b}d,$$
 \eqref{ext-HS-growth-bounds} yields
\begin{align}\label{ext-HS-Gaussian-bound}
\mathcal I
&\leqslant\frac{C_f^2C_g^2e^{2\mathrm{Re}(\overline{b}d)}}{\pi^2}
\int_{\C}e^{-q|z|^2+2\mathrm{Re}(c\overline{b}z)}\left(\int_{\C}
e^{-p|w|^2+2\mathrm{Re}[\overline{a}(cz+d)\overline{w}]}
dA(w)\right)dA(z).
\end{align}
Observe that
$$-p|w|^2+2\mathrm{Re}[\overline{a}(cz+d)\overline{w}]
=-p\left|w-\frac{\overline{a}(cz+d)}{p}\right|^2
+\frac{|a|^2|cz+d|^2}{p}.$$
Thus we have
$$\int_{\C}e^{-p|w|^2+2\mathrm{Re}[\overline{a}(cz+d)\overline{w}]}
dA(w)=\frac{\pi}{p}e^{\frac{|a|^2|cz+d|^2}{p}}.$$
Let
$$r=q-\frac{|ac|^2}{p}>0,\qquad
\ell=c\overline{b}+\frac{|a|^2c\overline{d}}{p},$$
where $r>0$ follows from \eqref{ext-HS-decay-parameters}.
Then
\begin{align*}
-q|z|^2+2\mathrm{Re}(c\overline{b}z)
+\frac{|a|^2|cz+d|^2}{p}
&=-r|z|^2+2\mathrm{Re}(\ell z)+\frac{|ad|^2}{p}\\
&=-r\left|z-\frac{\overline{\ell}}{r}\right|^2
+\frac{|\ell|^2}{r}+\frac{|ad|^2}{p}.
\end{align*}
Substituting the above equality  into \eqref{ext-HS-Gaussian-bound}
and using
$$
\int_{\mathbb C}e^{-r\left|z-\frac{\overline\ell}{r}\right|^2}\,dA(z)
=\int_{\mathbb C}e^{-r|u|^2}\,dA(u)
=\frac{\pi}{r},
$$
we obtain
$$\mathcal I\leqslant\frac{C_f^2C_g^2}{pr}
\exp\left\{2\mathrm{Re}(\overline{b}d)
+\frac{|ad|^2}{p}+\frac{|\ell|^2}{r}\right\}<\infty.$$
Hence \eqref{ext-HS-integral} holds. Applying $(1)$, we conclude
that $T$ is Hilbert--Schmidt and hence compact.
\end{proof}

The next example shows that the assumption in Proposition \ref{T2} that both factors are bounded
cannot be replaced by boundedness of just one factor. In this example,
$$|\varphi^{\prime}(z)\psi^{\prime}(z)|<1,\qquad
\varphi(0)\psi(0)\neq0,\qquad \rho(f)+\rho(g)<3,$$
but $W_{g,\psi}W_{f,\varphi}^*$ is bounded and not compact.

\begin{exam}\label{ext-noncompact}
Let
$$f(z)=1,\qquad\varphi(z)=\frac{z}{2}+1,\qquad
g(z)=e^{-1}\exp\left\{\frac{3z^2}{8}-\frac{5z}{4}\right\},\qquad
\psi(z)=z+1,$$
and consider $W_{f,\varphi}$ and $W_{g,\psi}$ on their maximal domains. By  Theorem \ref{L1}, $W_{f,\varphi}$ is bounded.
We next show that $W_{g,\psi}$ is densely defined but unbounded.
For $z=x+\mathrm{i}y$, we have
\begin{align*}
|g(z)|^2e^{-|z|^2}
&=\exp\left\{-2-\frac{x^2}{4}-\frac{7y^2}{4}-\frac{5x}{2}\right\}\\
&=e^{\frac{17}{4}}
\exp\left\{-\frac{(x+5)^2}{4}-\frac{7y^2}{4}\right\}.
\end{align*}
Let $P$ be a polynomial of degree at most $N$. Choose $C>0$ such that
$$|P(u-4)|^2\leqslant C(1+|u|^{2N}),\qquad u\in\C.$$
We make the change of variables $u=z+5$. Since $z=x+\mathrm{i}y$, we have
$$\mathrm{Re}\,u=x+5,\qquad
\mathrm{Im}\,u=y,\qquad dA(u)=dA(z),$$
and
$$P(\psi(z))=P(z+1)=P(u-4).$$
Substituting these identities into the expansion of
$\|g\cdot(P\circ\psi)\|^2$ and then using polar coordinates, we obtain
\begin{align*}
\|g\cdot(P\circ\psi)\|^2&=\frac{1}{\pi}\int_\C\big|g(z)P(\psi(z))\big|^2e^{-|z|^2}dA(z)\\
&=\frac{e^{\frac{17}{4}}}{\pi}\int_{\C}|P(u-4)|^2
e^{-\frac{(\mathrm{Re}\,u)^2}{4}-\frac{7(\mathrm{Im}\,u)^2}{4}}dA(u)\\
&\leqslant\frac{Ce^{\frac{17}{4}}}{\pi}
\int_{\C}(1+|u|^{2N})e^{-\frac{|u|^2}{4}}dA(u)\\
&=2Ce^{\frac{17}{4}}\int_0^{\infty}(1+R^{2N})e^{-\frac{R^2}{4}}R\,dR\\
&=Ce^{\frac{17}{4}}(4+4^{N+1}N!)<\infty.
\end{align*}
Taking $P=1$ gives $g\in\mathcal F^2$. Moreover, every polynomial belongs to
$D(W_{g,\psi})$, so $W_{g,\psi}$ is densely defined.
On the real axis,
$$|g(x)|^2e^{|\psi(x)|^2-|x|^2}
=e^{\frac{3x^2}{4}-\frac{x}{2}-1},\qquad x\in\R,$$
and hence
$$\lim_{x\rightarrow\infty}|g(x)|^2e^{|\psi(x)|^2-|x|^2}
=\lim_{x\rightarrow\infty}e^{\frac{3x^2}{4}-\frac{x}{2}-1}=\infty.$$
Therefore,  Theorem \ref{L1} implies that $W_{g,\psi}$ is unbounded.

To compute the product, first observe that
$$(W_{f,\varphi}K_z)(u)
=e^{\overline{z}(\frac{u}{2}+1)}
=e^{\overline{z}}K_{\frac{z}{2}}(u).$$
For every $h\in\mathcal F^2$, the reproducing property gives
\begin{align*}
(W_{f,\varphi}^*h)(z)
&=\langle W_{f,\varphi}^*h,K_z\rangle
=\langle h,W_{f,\varphi}K_z\rangle\\
&=\langle h,e^{\overline{z}}K_{\frac{z}{2}}\rangle
=e^zh\left(\frac{z}{2}\right).
\end{align*}
Let $T_0=W_{g,\psi}W_{f,\varphi}^*$ on its natural domain.
For every $h\in\operatorname{D}(T_0)$, we have the pointwise identity
\begin{align}\label{ext-noncompact-product-formula}
g(z)(W_{f,\varphi}^*h)(\psi(z))
&=e^{-1}\exp\left\{\frac{3z^2}{8}-\frac{5z}{4}\right\}
e^{z+1}h\left(\frac{z+1}{2}\right)\notag\\
&=\exp\left\{\frac{3z^2}{8}-\frac{z}{4}\right\}
h\left(\frac{z+1}{2}\right).
\end{align}
We now verify that the right hand side defines a bounded weighted
composition operator on $\mathcal F^2$. For $z=x+\mathrm{i}y$,
\begin{align}\label{ext-noncompact-bound}
\left|\exp\left\{\frac{3z^2}{8}-\frac{z}{4}\right\}\right|^2
e^{\left|\frac{z+1}{2}\right|^2-|z|^2}
&=\exp\left\{\frac{3}{4}\mathrm{Re}(z^2)
-\frac{3}{4}|z|^2+\frac{1}{4}\right\}\notag\\
&=e^{\frac{1}{4}-\frac{3y^2}{2}}\leqslant e^{\frac{1}{4}}.
\end{align}
In particular, the weight in \eqref{ext-noncompact-product-formula}
belongs to $\mathcal F^2$, since \eqref{ext-noncompact-bound} gives
$$\int_{\C}\left|\exp\left\{\frac{3z^2}{8}-\frac{z}{4}\right\}\right|^2d\mu(z)
\leqslant\frac{e^{\frac{1}{4}}}{\pi}
\int_{\C}e^{-\left|\frac{z+1}{2}\right|^2}dA(z)
=4e^{\frac{1}{4}}<\infty.$$
By Theorem \ref{L1}, the weighted composition operator
$$T=W_{\exp\{\frac{3z^2}{8}-\frac{z}{4}\},\,\frac{z+1}{2}}$$
is bounded on $\mathcal F^2$.
Equation \eqref{ext-noncompact-product-formula} shows that
$T$ agrees with $T_0$ on $D(T_0)$.
To verify that this domain is dense, apply the formula for
$W_{f,\varphi}^*h$ above to $h=K_w$. It gives
$$g\cdot\bigl((W_{f,\varphi}^*K_w)\circ\psi\bigr)
=TK_w\in\mathcal F^2,\qquad w\in\C.$$
Thus $K_w\in D(T_0)$ for every $w\in\C$, and $D(T_0)$ is dense.
Consequently, $T$ is the unique bounded extension of $T_0$.
Since $W_{f,\varphi}^*$ is bounded, Lemma \ref{ext-domain} gives
$$D(T_0)=D(W_{f,\varphi}^*)=\mathcal F^2.$$
Hence $T_0=T$, and
$$(Th)(z)=\exp\left\{\frac{3z^2}{8}-\frac{z}{4}\right\}
h\left(\frac{z+1}{2}\right),\qquad h\in\mathcal F^2.$$

Taking $z=x\in\R$ in \eqref{ext-noncompact-bound}, we obtain
$$\lim_{x\rightarrow\infty}
\left|\exp\left\{\frac{3x^2}{8}-\frac{x}{4}\right\}\right|^2
e^{\left|\frac{x+1}{2}\right|^2-x^2}=e^{\frac{1}{4}}>0.$$
Hence $T$ is not compact by part $(2)$ of Theorem \ref{L1}.

Finally, $\rho(f)=0$, $\rho(g)=2$, and
$$|\varphi^{\prime}(z)\psi^{\prime}(z)|=\frac{1}{2}<1,\qquad
\varphi(0)\psi(0)=1\neq0,\qquad \rho(f)+\rho(g)=2<3.$$
Thus the three conditions in (2) of Proposition \ref{T2} hold,
while the product is not compact. The assumption that both factors
are bounded therefore cannot be replaced by boundedness of just one factor.
\end{exam}

Proposition \ref{ext-HS} gives a sufficient condition for compactness for general weights.
For quadratic exponential weights, the next proposition gives a necessary and sufficient
condition. It also shows that compactness and the Hilbert--Schmidt property are equivalent
for this class of products.

\begin{prop}\label{ext-Gaussian-compact}
Let
$$f(z)=f(0)e^{\alpha z^2+\beta z},\qquad
g(z)=g(0)e^{\gamma z^2+\delta z},$$
where $f(0)g(0)\neq0$, $\alpha,\beta,\gamma,\delta\in\C$ and
$|\alpha|,|\gamma|<\frac{1}{2}$.
Assume that $W_{g,\psi}W_{f,\varphi}^*$ has a unique bounded extension $T$
to $\mathcal F^2$. Assume also that exactly one of $W_{f,\varphi}$ and
$W_{g,\psi}$ is bounded on $\mathcal F^2$.
Put $\Delta=1-4|\gamma|^2>0$. Then the following assertions are equivalent:\\
$(1)$ $T$ is compact.\\
$(2)$ $T$ is Hilbert--Schmidt.\\
$(3)$ For every $z\in\C$,
$$\Delta-|\varphi^{\prime}(z)\psi^{\prime}(z)|^2
>2\left|\Delta\overline{\alpha}
+\overline{\gamma}\big(\overline{\varphi^{\prime}(z)}\psi^{\prime}(z)\big)^2\right|.$$
\end{prop}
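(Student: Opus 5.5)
The plan is to reduce everything to the Gaussian kernel-norm formula obtained in the proof of Theorem \ref{ext-invertible}. For weights of the stated quadratic exponential form, the inner product identity \eqref{ext-kernel-inner-product} is available; it was derived using only the forms of $f,g$ and $|\gamma|<\frac12$, with no invertibility assumption. Taking $\omega=w$ there gives \eqref{ext-invertible-kernel-norm}:
$$\|Tk_w\|^2=\|TK_0\|^2e^{Q(w)},\qquad w\in\C,$$
with $Q$ as in \eqref{ext-invertible-quadratic}. By Theorem \ref{ext-affine}, $\varphi(z)=az+b$ and $\psi(z)=cz+d$. Write $Q(w)=Q_2(w)+2\mathrm{Re}(L\overline w)$, where
$$Q_2(w)=A|w|^2+2\mathrm{Re}(B\overline w^2),\qquad A=\frac{|ac|^2}{\Delta}-1,\qquad B=\overline\alpha+\frac{\overline\gamma(\overline a c)^2}{\Delta}.$$
Writing $w=te^{\mathrm{i}\theta}$ gives $Q_2(te^{\mathrm{i}\theta})=t^2\big(A+2\mathrm{Re}(Be^{-2\mathrm{i}\theta})\big)$. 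Hence the extreme values of $Q_2$ on the unit circle are $A\pm2|B|$. Multiplying by $\Delta>0$, condition $(3)$ reads $-A\Delta>2|B|\Delta$, i.e. $A+2|B|<0$. So $(3)$ is equivalent to $Q_2$ being negative definite.

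For $(3)\Rightarrow(2)$, I use the identity \eqref{ext-HS-kernel-identity} from the proof of Proposition \ref{ext-HS}:
$$\|T\|_{\mathcal S_2}^2=\frac1\pi\int_\C\|Tk_w\|^2dA(w)=\frac{\|TK_0\|^2}{\pi}\int_\C e^{Q(w)}dA(w).$$
If $Q_2(w)\leqslant-\kappa|w|^2$ with $\kappa=-(A+2|B|)>0$, then
$$Q(w)\leqslant-\kappa|w|^2+2|L||w|,$$
so the integral is finite and $T$ is Hilbert--Schmidt. The implication $(2)\Rightarrow(1)$ is standard.

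For $(1)\Rightarrow(3)$, suppose $T$ is compact. Since $k_w\to0$ weakly as $|w|\to\infty$, compactness gives $\|Tk_w\|\to0$, so $Q(w)\to-\infty$ as $|w|\to\infty$. Now assume $(3)$ fails. Then there is $\theta$ with
$$\lambda:=A+2\mathrm{Re}(Be^{-2\mathrm{i}\theta})\geqslant0.$$
Along the ray $w=te^{\mathrm{i}\theta}$, $t\geqslant0$, we have $Q=\lambda t^2+\mu t$, where $\mu=2\mathrm{Re}(L e^{-\mathrm{i}\theta})$. Along the opposite ray $w=-te^{\mathrm{i}\theta}$, the quadratic coefficient is again $\lambda$, because $e^{-2\mathrm{i}\theta}$ is unchanged, while the linear coefficient becomes $-\mu$. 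Choosing whichever ray has nonnegative linear coefficient gives $Q\geqslant0$ along it for all $t$. This contradicts $Q\to-\infty$, so $(3)$ holds.

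The main point needing care is justifying that \eqref{ext-invertible-kernel-norm} holds under the present hypotheses rather than under invertibility. I would check this by recomputing $\langle TK_w,TK_w\rangle$ directly from \eqref{ext-Gaussian-kernel} via the Gaussian integral \eqref{ext-Gaussian-integral} with $\theta=\gamma$. That computation needs only $|\gamma|<\frac12$, which is given.
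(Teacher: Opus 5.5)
Your proposal is correct and follows the paper's proof essentially step for step: the same kernel-norm identity $\|Tk_w\|^2=\|TK_0\|^2e^{Q(w)}$, the same quadratic bound on $Q$ integrated via \eqref{ext-HS-kernel-identity} for $(3)\Rightarrow(2)$, and the same ray argument for $(1)\Rightarrow(3)$. The only cosmetic difference is in $(1)\Rightarrow(3)$: you choose whichever of the two opposite rays has nonnegative linear coefficient, whereas the paper takes the extremal direction $\eta$ with $s\overline{\eta}^{\,2}=|s|$ and multiplies $\|Tk_{t\eta}\|^2\|Tk_{-t\eta}\|^2$ to cancel the linear term.
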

\begin{proof}
By Theorem \ref{ext-affine}, there exist $a,b,c,d\in\C$ such that
$$\varphi(z)=az+b,\qquad\psi(z)=cz+d,\qquad0<|ac|\leqslant1.$$
We prove $(1)\Rightarrow(3)\Rightarrow(2)\Rightarrow(1)$.
For the weights in the statement, the norm identity
\eqref{ext-invertible-kernel-norm} holds with $Q$ given by
\eqref{ext-invertible-quadratic}.
To simplify the coefficients in $Q$, let
\begin{align*}
r&=\Delta-|ac|^2,\qquad
s=\Delta\overline{\alpha}+\overline{\gamma}(\overline{a}c)^2,\\
\ell&=\overline{\beta}+\overline{a}d
+\frac{\overline{a}c\big[\overline{\delta}+\overline{c}b
+2\overline{\gamma}(\delta+c\overline{b})\big]}{\Delta}.
\end{align*}

Then \eqref{ext-invertible-kernel-norm} becomes
\begin{align}\label{ext-Gaussian-compact-norm}
\|Tk_w\|^2=\|TK_0\|^2
\exp\left\{-\frac{r}{\Delta}|w|^2
+\frac{2}{\Delta}\mathrm{Re}(s\overline{w}^2)
+2\mathrm{Re}(\ell\overline{w})\right\},\qquad w\in\C,
\end{align}
where $\|TK_0\|^2>0$ by \eqref{ext-invertible-scale}.
Condition $(3)$ is precisely $r>2|s|$.

We first prove $(1)\Rightarrow(3)$. Suppose that $T$ is compact.
Since the normalized kernels converge weakly to zero as $|w|\rightarrow\infty$,
compactness gives $\lim\limits_{|w|\rightarrow\infty}\|Tk_w\|=0$.
To prove that $r>2|s|$,
we choose a unit direction $\eta$ with $s\overline\eta^2=|s|$.
If $s=0$, take $\eta=1$. If $s\neq0$, write $s=|s|e^{\mathrm{i}\theta}$
and take $\eta=e^{\frac{\mathrm{i}\theta}{2}}$. In either case,
$$|\eta|=1,\qquad s\overline{\eta}^{\,2}=|s|.$$
Substituting $w=t\eta$ and $w=-t\eta$ into
\eqref{ext-Gaussian-compact-norm} gives, for $t>0$,
\begin{align*}
\|Tk_{t\eta}\|^2
&=\|TK_0\|^2
\exp\left\{-\frac{r-2|s|}{\Delta}t^2
+2t\mathrm{Re}(\ell\overline{\eta})\right\},\\
\|Tk_{-t\eta}\|^2
&=\|TK_0\|^2
\exp\left\{-\frac{r-2|s|}{\Delta}t^2
-2t\mathrm{Re}(\ell\overline{\eta})\right\}.
\end{align*}
It follows that
\begin{align}\label{ext-Gaussian-compact-opposite-kernels}
\|Tk_{t\eta}\|^2\|Tk_{-t\eta}\|^2
=\|TK_0\|^4\exp\left\{-\frac{2(r-2|s|)}{\Delta}t^2\right\}.
\end{align}
Since $T$ is compact,
$$\lim_{t\rightarrow\infty}\|Tk_{t\eta}\|^2\|Tk_{-t\eta}\|^2=0.$$
If $r\leqslant2|s|$, \eqref{ext-Gaussian-compact-opposite-kernels} gives
$$\|Tk_{t\eta}\|^2\|Tk_{-t\eta}\|^2
\geqslant\|TK_0\|^4>0,\qquad t>0,$$
contradicting this limit.
Hence $r>2|s|$, which proves $(3)$.

We next prove $(3)\Rightarrow(2)$. Suppose that $r>2|s|$ and let
$$\varepsilon=\frac{r-2|s|}{\Delta}>0.$$
Using $\mathrm{Re}(s\overline{w}^2)\leqslant|s||w|^2$
and
$$2|\ell||w|\leqslant\frac{\varepsilon}{2}|w|^2
+\frac{2|\ell|^2}{\varepsilon},$$
we obtain from \eqref{ext-Gaussian-compact-norm} that
$$\|Tk_w\|^2
\leqslant\|TK_0\|^2e^{-\varepsilon|w|^2+2|\ell||w|}
\leqslant\|TK_0\|^2e^{\frac{2|\ell|^2}{\varepsilon}}
e^{-\frac{\varepsilon}{2}|w|^2}.$$
Therefore \eqref{ext-HS-kernel-identity} gives
\begin{align*}
\sum_{n=0}^{\infty}\|Te_n\|^2
&=\frac{1}{\pi}\int_{\C}\|Tk_w\|^2dA(w)\notag\\
&\leqslant\frac{\|TK_0\|^2}{\pi}
e^{\frac{2|\ell|^2}{\varepsilon}}
\int_{\C}e^{-\frac{\varepsilon}{2}|w|^2}dA(w)\notag\\
&=\frac{2\|TK_0\|^2}{\varepsilon}
e^{\frac{2|\ell|^2}{\varepsilon}}<\infty.
\end{align*}
It follows that $T$ is Hilbert--Schmidt. Thus $(3)\Rightarrow(2)$.
Since every Hilbert--Schmidt operator is compact,
$(2)\Rightarrow(1)$ follows immediately.
This proves the equivalence of $(1)$, $(2)$ and $(3)$.

\end{proof}

The case in which both weights have no zeros is covered by
Proposition \ref{ext-Gaussian-compact}. The next proposition shows that,
when both weights have only finitely many zeros and at least one has a zero,
boundedness of the product extension already implies the Hilbert--Schmidt property.

\begin{prop}\label{ext-finite-zeros}
Suppose that $f$ and $g$ are entire functions with $f(0)g(0)\neq0$,
and that each of them has only finitely many zeros.
Assume that $W_{g,\psi}W_{f,\varphi}^*$ has a unique bounded extension
$T$ to $\mathcal F^2$, and that exactly one of $W_{f,\varphi}$ and
$W_{g,\psi}$ is bounded on $\mathcal F^2$.
If at least one of $f$ and $g$ has a zero, then $T$ is Hilbert--Schmidt
and hence compact.
\end{prop}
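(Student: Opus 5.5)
The plan is to reduce to the quadratic-exponential setting of Proposition \ref{ext-Gaussian-compact}, carrying polynomial prefactors along. Then I would show that boundedness of $T$ forces strict Gaussian decay of $\|Tk_w\|$, and conclude with the identity \eqref{ext-HS-kernel-identity}.

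\textbf{Step 1: structure of the weights.} By Theorem \ref{ext-affine}, $f,g\in\mathcal F^2$, so both have order at most $2$. We also have $\varphi(z)=az+b$ and $\psi(z)=cz+d$ with $0<|ac|\leqslant1$. Since each weight has finitely many zeros, Theorem \ref{hadamard-factorization} gives
$$f=P_fe^{\alpha z^2+\beta z},\qquad g=P_ge^{\gamma z^2+\delta z},$$
with polynomials $P_f,P_g$, $P_f(0)P_g(0)\neq0$, and $\deg P_f+\deg P_g\geqslant1$. I would show $|\alpha|,|\gamma|<\frac12$ directly from $f,g\in\mathcal F^2$. Along a ray where $\mathrm{Re}(\alpha z^2)=|\alpha||z|^2$, the integrand $|f|^2e^{-|z|^2}$ is not integrable over a sector when $|\alpha|\geqslant\frac12$. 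This is the polynomial-times-exponential analogue of Lemma \ref{ext-exponential-fock}.

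\textbf{Step 2: compute $\|Tk_w\|^2$.} From \eqref{ext-kernel},
$$TK_w(z)=\overline{P_f(w)}\,P_g(z)\,E_w(z),$$
where $E_w$ is exactly the Gaussian kernel \eqref{ext-Gaussian-kernel}, with $f(0),g(0)$ replaced by $1$. Hence
$$\|Tk_w\|^2=|P_f(w)|^2\,e^{Q(w)}\,R(w),\qquad R(w)=\frac{\int_\C|P_g|^2|E_w|^2d\mu}{\int_\C|E_w|^2d\mu},$$
with the same real quadratic $Q$ as in \eqref{ext-invertible-quadratic}, up to an additive constant. The normalized density $|E_w|^2e^{-|z|^2}/\!\int$ is a Gaussian probability density on $\R^2$. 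Its covariance is fixed, determined by $\gamma$, and its mean $m(w)$ is affine in $w,\overline w$ with nonzero linear part, because $\overline a c\neq0$. Consequently $R(w)$ is a real polynomial in $\mathrm{Re}\,w,\mathrm{Im}\,w$. Its top-degree homogeneous part coincides with that of $|P_g(m(w))|^2$, and $R(w)\geqslant c_0|P_g(m(w))|^2$ for $|w|$ large; Jensen's inequality on each linear factor gives this cheaply.

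\textbf{Step 3: force strict decay.} Write the quadratic part of $Q$ as
$$-\frac{r}{\Delta}|w|^2+\frac{2}{\Delta}\mathrm{Re}(s\overline w^2),$$
as in the proof of Proposition \ref{ext-Gaussian-compact}. I would choose $\eta$ with $s\overline\eta^2=|s|$ and multiply the norms at $t\eta$ and $-t\eta$; this cancels the linear part of $Q$:
$$\|Tk_{t\eta}\|^2\|Tk_{-t\eta}\|^2=\mathrm{const}\cdot\Pi(t)\,e^{-2(r-2|s|)t^2/\Delta},$$
where $\Pi(t)=|P_f(t\eta)P_f(-t\eta)|^2R(t\eta)R(-t\eta)$. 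Since at least one polynomial is nonconstant, Step 2 shows $\Pi(t)\gtrsim t^{2N}$ with $N\geqslant1$. The left side is at most $\|T\|^4$, so $r\leqslant 2|s|$ is impossible, and therefore $r>2|s|$. Then $e^{Q(w)}\leqslant Ce^{-\varepsilon|w|^2}$, which kills the polynomial factors. Hence $\int_\C\|Tk_w\|^2dA(w)<\infty$, and \eqref{ext-HS-kernel-identity} gives that $T$ is Hilbert--Schmidt, hence compact.

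\textbf{Main obstacle.} The delicate point is the lower bound on $R(w)$ along the lines $\pm t\eta$. The mean map $m$ is affine and injective, but if $\deg P_f=0$ one must still check that $|P_g(m(\pm t\eta))|$ grows. I would handle this by noting that a nonconstant polynomial has only finitely many zeros, so $|P_g(m(t\eta))|\to\infty$ along any line on which $m$ is nonconstant. I would then make the comparison between $R$ and $|P_g\circ m|^2$ rigorous by expanding $|P_g|^2$ around the mean and bounding the lower-order moment terms.
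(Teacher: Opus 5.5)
Your argument is correct, but it runs through a different object than the paper's proof.

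\textbf{The paper's route.} The paper never integrates out a variable before the decay argument. It works with the two-variable kernel $\langle Tk_w,k_z\rangle$, which by \eqref{ext-kernel} is explicitly $\overline{P_f(w)}P_g(z)$ times a Gaussian in $(w,z)$. So $|\langle Tk_w,k_z\rangle|\leqslant\|T\|$ is already a pointwise constraint. Evaluating at $(\pm tw_0,\pm tz_0)$ cancels the linear terms and shows that the real quadratic form $R_1(w,z)$ on $\C^2$ is negative definite. A null direction would need $w_0z_0\neq0$, since $R_1(0,z)<0$ and $R_1(w,0)<0$ by $|\alpha|,|\gamma|<\frac12$. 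The polynomial factor then has degree $2(\deg P_f+\deg P_g)>0$ in $t$ and blows up. The double integral \eqref{ext-HS-integral} is then bounded directly.

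\textbf{Your route.} You use the one-variable quantity $\|Tk_w\|^2$, which forces you to integrate $|P_g|^2$ against the $z$-Gaussian. That is the source of your extra work: showing the mean map $m$ is a real-affine bijection (because $\overline{a}c\neq0$), and controlling the growth of $R$ along lines. In exchange, you show that boundedness of $T$ forces exactly condition $(3)$ of Proposition \ref{ext-Gaussian-compact}, namely $r>2|s|$. You also reuse $Q$ from \eqref{ext-invertible-quadratic} verbatim. Finally, your route isolates the role of the zeros: they serve only to rule out the borderline case $r=2|s|$. That borderline case is where the zero-free bounded non-compact examples live, e.g.\ unitary products and Example \ref{ext-noncompact}. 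The paper's route is shorter because it needs no moment computation and treats both polynomial factors symmetrically.

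\textbf{Two small repairs.}

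In Step 1, when $|\alpha|=\frac12$ there are two opposite rays with $\mathrm{Re}(\alpha z^2)=|\alpha||z|^2$. A sector around one of them can carry finite mass if $\mathrm{Re}(\beta z)\to-\infty$ along it; for $e^{z^2/2+z}$ the sector around the negative real axis does. You have two options:
\begin{itemize}
\item pick the ray on which $\mathrm{Re}(\beta z)\geqslant0$;
\item or argue as the paper does: $|P_f|^{-1}$ is bounded off a disc, so $e^{\alpha z^2+\beta z}\in\mathcal F^2$ and Lemma \ref{ext-exponential-fock} applies.
\end{itemize}

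In Step 2, ``Jensen on each linear factor'' is not the right mechanism, since the factors are not independent. The clean lower bound is
$R(w)=\mathbb E|P_g(m(w)+Y)|^2\geqslant|\mathbb E P_g(m(w)+Y)|^2$, with $Y$ the centered Gaussian of fixed covariance. Here $m\mapsto\mathbb E P_g(m+Y)$ is a holomorphic polynomial with the same degree and leading coefficient as $P_g$. Your top-degree comparison also suffices.
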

\begin{proof}
Suppose that at least one of $f$ and $g$ has a zero.
By Theorem \ref{ext-affine}, $f,g\in\mathcal F^2$ and there exist $a,b,c,d\in\C$ such that
$$\varphi(z)=az+b,\qquad\psi(z)=cz+d,\ \ \ 0<|ac|\leqslant1.$$
Since $f,g\in\mathcal F^2$ have order at most $2$ and only finitely
many zeros, Theorem \ref{hadamard-factorization} gives
$$f(z)=P(z)e^{\alpha z^2+\beta z},\qquad
g(z)=Q(z)e^{\gamma z^2+\delta z},$$
where $P$ and $Q$ are nonzero polynomials with the same zeros as $f$ and $g$,
respectively, counted with multiplicities.
Next we  verify that $|\alpha|<\frac{1}{2}$. Since $P$ is a nonzero polynomial, there exist $R>0$ and $C_P>0$,
both depending only on $P$, such that
$$|P(z)|^{-1}\leqslant C_P$$ for $|z|>R$.
Since $f(z)=P(z)e^{\alpha z^2+\beta z}$, we have
$$
\int_{\mathbb C}|e^{\alpha z^2+\beta z}|^2d\mu(z)
\leqslant
\max_{|z|\leqslant R}|e^{\alpha z^2+\beta z}|^2
+C_P^2\|f\|^2<\infty.
$$
Thus $e^{\alpha z^2+\beta z}\in\mathcal F^2$, and
Lemma \ref{ext-exponential-fock} gives $|\alpha|<\frac{1}{2}$.
Similarly, $|\gamma|<\frac{1}{2}$.
Let $m=\deg P$ and $n=\deg Q$.
Since $f(0)=P(0)$ and $g(0)=Q(0)$, we have $P(0)Q(0)\neq0$.
The assumption that at least one of $f,g$ has a zero implies
that at least one of $P,Q$ is nonconstant, so $m+n>0$.

We shall prove that the integral in \eqref{ext-HS-integral} is finite.
Combining  \eqref{ext-kernel} with  Lemma \ref{ext-domain} gives
\begin{align}\label{ext-finite-zeros-kernel}
\begin{split}
\langle Tk_w,k_z\rangle=\overline{P(w)}Q(z)\exp\bigg\{&\overline{b}d+\gamma z^2+\overline{\alpha}\overline{w}^2
+\overline{a} c z\overline{w}+(\delta+c\overline{b})z\\
&+(\overline{\beta}+\overline{a} d)\overline{w}-\frac{|z|^2+|w|^2}{2}\bigg\},\ \ w, z\in \C.
\end{split}
\end{align}

Let
$$R_1(w,z)=\mathrm{Re}\big(\gamma z^2+\overline{\alpha}\overline{w}^2+\overline{a} c z\overline{w}\big)
-\frac{|z|^2+|w|^2}{2}.$$
Fix $(w_0,z_0)\in\C^2$. For $t>0$, \eqref{ext-finite-zeros-kernel} gives
$$|\langle Tk_{\pm tw_0},k_{\pm tz_0}\rangle|
=|P(\pm tw_0)Q(\pm tz_0)|
e^{\mathrm{Re}(\overline b d)+t^2R_1(w_0,z_0)}\times
e^{\pm t\mathrm{Re}[(\delta+c\overline b)z_0+
(\overline\beta+\overline a d)\overline{w_0}]}.
$$
Multiplying the two equalities above and using $\|k_w\|=1$,
for $t>0$, we have
\begin{align}\label{ext-finite-zeros-opposite-kernels}
\begin{split}
&|P(tw_0)P(-tw_0)Q(tz_0)Q(-tz_0)|
e^{2\mathrm{Re}(\overline{b}d)+2t^2R_1(w_0,z_0)}\\
&\qquad=|\langle Tk_{tw_0},k_{tz_0}\rangle|
|\langle Tk_{-tw_0},k_{-tz_0}\rangle|
\leqslant\|T\|^2.
\end{split}
\end{align}
 Note that $P(0)^2Q(0)^2\neq0$. The polynomial $P(tw_0)P(-tw_0)Q(tz_0)Q(-tz_0)$ is not identically
zero. Hence
$$\lim_{t\rightarrow\infty}
\frac{\log|P(tw_0)P(-tw_0)Q(tz_0)Q(-tz_0)|}{t^2}=0.$$
Thus, taking logarithms in \eqref{ext-finite-zeros-opposite-kernels},
dividing by $t^2$ and letting $t\rightarrow\infty$, we obtain
$$R_1(w_0,z_0)\leqslant0.$$

We next show that the inequality is strict whenever $(w_0,z_0)\neq(0,0)$.
If not, suppose that $R_1(w_0,z_0)=0$ for such a pair.
The estimates
\begin{align*}
R_1(0,z)&\leqslant-\left(\frac{1}{2}-|\gamma|\right)|z|^2<0,
\qquad z\neq0
\end{align*}
and
\begin{align*}
R_1(w,0)&\leqslant-\left(\frac{1}{2}-|\alpha|\right)|w|^2<0,
\qquad w\neq0,
\end{align*}
show that $w_0z_0\neq0$.
Hence both $w_0$ and $z_0$ are nonzero. Therefore the polynomial in
\eqref{ext-finite-zeros-opposite-kernels} has degree $2(m+n)>0$, and
$$\lim_{t\rightarrow\infty}|P(tw_0)P(-tw_0)Q(tz_0)Q(-tz_0)|=\infty.$$
This contradicts \eqref{ext-finite-zeros-opposite-kernels}. Thus $R_1(w,z)<0$ for every nonzero $(w,z)$.
Then by continuity on the unit sphere in $\C^2$, we have
$$\varepsilon:=-\max_{|w|^2+|z|^2=1}R_1(w,z)>0.$$
Since $R_1(tw,tz)=t^2R_1(w,z)$ for real $t$, it follows that
\begin{align}\label{ext-finite-zeros-decay}
R_1(w,z)\leqslant-\varepsilon(|w|^2+|z|^2),\qquad w,z\in\C.
\end{align}

Choose $C_0>0$ such that
$$|P(w)|^2|Q(z)|^2\leqslant C_0(1+|w|^{2m})(1+|z|^{2n}),
\qquad w,z\in\C.$$
The linear terms satisfy
\begin{align*}
\begin{cases}
2\mathrm{Re}\big[(\delta+c\overline{b})z\big]
\leqslant\varepsilon|z|^2+\frac{|\delta+c\overline{b}|^2}{\varepsilon},\vspace{2mm}\\
2\mathrm{Re}\big[(\overline{\beta}+\overline{a}d)\overline{w}\big]
\leqslant\varepsilon|w|^2+\frac{|\overline{\beta}+\overline{a}d|^2}{\varepsilon}.
\end{cases}
\end{align*}
Let $\mathcal I$ denote the integral in \eqref{ext-HS-integral}, and put
$$C=C_0\exp\left\{2\mathrm{Re}(\overline{b}d)
+\frac{|\delta+c\overline{b}|^2+|\overline{\beta}+\overline{a}d|^2}{\varepsilon}\right\}.$$
Using \eqref{ext-finite-zeros-kernel}, \eqref{ext-finite-zeros-decay}
and the preceding estimates, we obtain
\begin{align*}
\mathcal I
&=\frac{1}{\pi^2}\iint_{\C^2}|\langle Tk_w,k_z\rangle|^2dA(z)dA(w)\\
&\leqslant\frac{C}{\pi^2}
\left(\int_{\C}(1+|w|^{2m})e^{-\varepsilon|w|^2}dA(w)\right)
\left(\int_{\C}(1+|z|^{2n})e^{-\varepsilon|z|^2}dA(z)\right).
\end{align*}
For every integer $j\geqslant0$, polar coordinates and the substitution
$u=\varepsilon R^2$ give
$$\int_{\C}|z|^{2j}e^{-\varepsilon|z|^2}dA(z)
=2\pi\int_0^\infty R^{2j+1}e^{-\varepsilon R^2}dR
=\frac{\pi}{\varepsilon^{j+1}}\int_0^\infty u^je^{-u}du
=\frac{\pi j!}{\varepsilon^{j+1}}.$$
Consequently,
$$\mathcal I\leqslant C
\left(\frac{1}{\varepsilon}+\frac{m!}{\varepsilon^{m+1}}\right)
\left(\frac{1}{\varepsilon}+\frac{n!}{\varepsilon^{n+1}}\right)<\infty.$$
Proposition \ref{ext-HS}$(1)$ now implies that $T$ is Hilbert--Schmidt
and hence compact.
\end{proof}

\begin{cor}\label{ext-unweighted}
Assume that $C_\psi C_\varphi^*$ has a unique bounded extension $T$
to $\mathcal F^2$, and that exactly one of $C_\varphi$ and $C_\psi$
is bounded on $\mathcal F^2$.
Then the following assertions hold:\\
$(1)$ $T$ is compact if and only if
 $|\varphi^{\prime}(z)\psi^{\prime}(z)|<1$ for every $z\in\C$.\\
$(2)$ $T$ is invertible if and only if
$|\varphi^{\prime}(z)\psi^{\prime}(z)|=1$ for every $z\in\C$.\\
$(3)$ $T$ is isometric if and only if it is unitary.
These properties hold if and only if $\varphi(0)=\psi(0)=0$ and
$|\varphi^{\prime}(z)\psi^{\prime}(z)|=1$ for every $z\in\C$.
\end{cor}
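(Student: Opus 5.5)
The corollary is the special case $f=g\equiv1$ of the results in this section. With $f=g\equiv 1$ we have $f(0)g(0)=1\neq0$. Both weights are zero-free, so they have the Gaussian form of Proposition \ref{ext-Gaussian-compact}, Theorem \ref{ext-unitary} and Theorem \ref{ext-invertible} with $\alpha=\beta=\gamma=\delta=0$, and then $\Delta=1$. By Theorem \ref{ext-affine}, $\varphi(z)=az+b$, $\psi(z)=cz+d$ with $0<|ac|\leqslant1$. So every assertion reduces to specializing the parameter conditions already obtained.

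For (1), I specialize condition (3) of Proposition \ref{ext-Gaussian-compact}. It reads $\Delta-|ac|^2>2|\Delta\overline\alpha+\overline\gamma(\overline a c)^2|$, which here becomes $1-|ac|^2>0$, i.e. $|\varphi'\psi'|<1$. That proposition makes this equivalent to compactness. (Proposition \ref{ext-boundary} also gives the necessity directly.)

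For (2), I use Theorem \ref{ext-invertible}: $T$ is invertible if and only if the three identities \eqref{ext-unitary-conditions} hold.
\begin{itemize}
\item With $\Delta=1$, the first identity is $|ac|^2=1$.
\item The second is $0=0$.
\item The third becomes $\overline a d+\overline a c\,\overline c b=0$, i.e. $\overline a(d+|c|^2 b)=0$. Since $a\neq0$, this is $d=-|c|^2b$.
\end{itemize}
This is the main obstacle. Invertibility forces the extra relation $d=-|c|^2 b$, which is not among the stated conditions, so I must show it is automatic once $|ac|=1$ and $T$ is bounded.

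My plan is to use the bound \eqref{ext-two-variable} with $z=a\overline c w$, as in the proof of Proposition \ref{ext-boundary}. There the Liouville argument gives \eqref{eq:constant}: $e^{(a\overline d+a|c|^2\overline b)w}$ is constant. Hence $a(\overline d+|c|^2\overline b)=0$, i.e. $d=-|c|^2b$. Thus $|ac|=1$ and boundedness of $T$ already imply the third identity, and (2) follows.

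For (3), Theorem \ref{ext-unitary} says $T$ is isometric if and only if it is unitary, if and only if the conditions above hold together with \eqref{ext-normalization}.
\begin{itemize}
\item With $\Delta=1$ and $\gamma=\delta=0$, \eqref{ext-normalization} reads $e^{2\mathrm{Re}(\overline b d)+|c|^2|b|^2}=1$.
\item Substituting $d=-|c|^2b$ gives $2\mathrm{Re}(\overline b d)=-2|c|^2|b|^2$, so the condition becomes $e^{-|c|^2|b|^2}=1$.
\item Since $c\neq0$, this forces $b=0$, and then $d=0$.
\end{itemize}
Conversely, $b=d=0$ with $|ac|=1$ satisfies every identity, so (3) follows.
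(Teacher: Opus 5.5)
Your proposal is correct. Parts (1) and (3) follow the paper's argument exactly. For (1) you specialize condition (3) of Proposition \ref{ext-Gaussian-compact} to $1-|ac|^2>0$. For (3) you specialize Theorem \ref{ext-unitary} to $|ac|=1$, $d=-|c|^2b$ and $e^{2\mathrm{Re}(\overline{b}d)+|c|^2|b|^2}=1$, which forces $b=d=0$.

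Part (2) is where you take a genuinely different route.

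The paper does not use Theorem \ref{ext-invertible} here. From the kernel formula it computes $T^*K_z=e^{b\overline{cz+d}}K_{\overline{a}(cz+d)}$, and hence
$(Th)(z)=e^{\overline{b}(cz+d)}h(\overline{a}(cz+d))$.
So $T$ is itself a bounded weighted composition operator with affine symbol $\overline{a}cz+\overline{a}d$. Theorem \ref{L7} then gives invertibility if and only if $|\overline{a}c|=1$, with no further work.

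You instead pass through the three identities of Theorem \ref{ext-invertible}. You correctly notice that the third one, $d=-|c|^2b$, is not part of the claimed criterion, and you show it is automatic using the Liouville argument behind \eqref{eq:constant}. With $f=g=1$ and $|ac|=1$, that argument makes $e^{(a\overline{d}+a|c|^2\overline{b})w}$ a bounded entire function, hence constant. This forces $a(\overline{d}+|c|^2\overline{b})=0$, so $d=-|c|^2b$ because $a\neq0$. That step is valid.

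Each route has its advantage:
\begin{itemize}
\item The paper's route is shorter and more structural. Once $T$ is recognized as a single weighted composition operator, the whole one-operator theory applies and no parameter bookkeeping is needed.
\item Your route stays inside the section's own machinery. It also makes explicit that boundedness together with $|ac|=1$ already forces $d=-|c|^2b$, which is exactly the relation that reduces the normalization in (3) to $e^{-|c|^2|b|^2}=1$.
\end{itemize}
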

\begin{proof}
By Theorem \ref{ext-affine}, there exist $a,b,c,d\in\C$ such that
$$\varphi(z)=az+b,\qquad\psi(z)=cz+d,\qquad0<|ac|\leqslant1.$$
Thus $\varphi^{\prime}(z)\psi^{\prime}(z)=ac$ for every $z\in\C$,
and $\varphi(0)=b$, $\psi(0)=d$.
By Lemma \ref{ext-domain}, we have
$$(TK_w)(z)=e^{\overline{aw+b}(cz+d)},\qquad z,w\in\C.$$
Since $(T^*K_z)(w)=\overline{(TK_w)(z)}$, it follows that
$$T^*K_z=e^{b\overline{cz+d}}K_{\overline{a}(cz+d)}.$$
Therefore, the reproducing property gives, for every $h\in\mathcal F^2$,
\begin{align}\label{ext-unweighted-forward}
(Th)(z)=\langle h,T^*K_z\rangle
=e^{\overline{b}(cz+d)}h(\overline{a}(cz+d)),\qquad z\in\C.
\end{align}

We  prove $(1)-(3)$ in sequence.
For the constant weights $f=g=1$, we have
$\alpha=\beta=\gamma=\delta=0$ and $\Delta=1$.
Condition $(3)$ of Proposition \ref{ext-Gaussian-compact} therefore
reduces to $1-|ac|^2>0$. Thus $T$ is compact if and only if $|ac|<1$. Thus conclusion $(1)$ holds.

The symbol in \eqref{ext-unweighted-forward} is
$\overline{a}cz+\overline{a}d$. Since $T$ is bounded,
Theorem \ref{L7} gives that $T$ is invertible if and only if
$|\overline{a}c|=1$, which proves $(2)$.

For $(3)$, Theorem \ref{ext-unitary}, with the constant weights $f=g=1$,
states that $T$ is isometric if and only if it is unitary, and that
these properties hold precisely when
$$|ac|=1,\qquad d+|c|^2b=0,\qquad
e^{2\mathrm{Re}(\overline{b}d)+|c|^2|b|^2}=1.$$
Substituting $d=-|c|^2b$ into the last equality gives
$e^{-|c|^2|b|^2}=1$. Since $c\neq0$, this is equivalent to $b=d=0$.
This proves $(3)$ and completes the proof.
\end{proof}

In Theorem \ref{U}, the products $W_{g,\psi}W_{f,\varphi}^*$ and
$W_{g,\psi}^*W_{f,\varphi}$ are unitary simultaneously when both factors
are bounded. The following example shows that this equivalence can fail
if only one factor is bounded: $W_{g,\psi}C_\varphi^*$ is unitary,
whereas $W_{g,\psi}^*C_\varphi$ has no bounded extension.

\begin{exam}\label{ext-reverse-unbounded}
Let
$$\varphi(z)=\frac{z}{2},\qquad
g(z)=e^{-\frac{1}{2}}e^{-z},\qquad\psi(z)=2z+2,$$
and consider $C_\varphi$ and $W_{g,\psi}$ on their maximal domains.
The operator $C_{\varphi}$ is bounded, whereas $W_{g,\psi}$ is unbounded and its domain contains all polynomials. Direct calculation gives
$$W_{g,\psi}C_{\varphi}^*h(z)=e^{-\frac{1}{2}}e^{-z}h(z+1),\qquad h\in\mathcal F^2.$$
 By Theorem \ref{L6}, $W_{g,\psi}C_{\varphi}^*$ is unitary; in particular, the natural product is defined on all of $\mathcal F^2$. We next show that $W_{g,\psi}^*C_\varphi$ has no bounded extension.
For every $w\in\mathbb C$, we have
\[
(C_\varphi K_w)(z)
=e^{\frac{\overline{w}z}{2}}
=K_{\frac{w}{2}}(z).
\]
By Lemma \ref{ext-adjoint-kernel},
$K_{\frac{w}{2}}\in D(W_{g,\psi}^*)$.
Hence $K_w\in D(W_{g,\psi}^*C_\varphi)$, and
\begin{align*}
W_{g,\psi}^*C_\varphi K_w
&=W_{g,\psi}^*K_{\frac{w}{2}}=\overline{g(\frac{w}{2})}K_{\psi(\frac{w}{2})}
=e^{-\frac12-\frac{\overline{w}}2}K_{w+2}.
\end{align*}
For $x\in\mathbb R$, the normalized kernel
$k_x=e^{-\frac{x^2}{2}}K_x$ satisfies $\|k_x\|=1$.
Using $\|K_{x+2}\|^2=e^{(x+2)^2}$, we obtain
\begin{align*}
\|W_{g,\psi}^*C_\varphi k_x\|^2
=e^{-x^2-1-x}\|K_{x+2}\|^2=e^{-x^2-1-x+(x+2)^2}
=e^{3x+3}.
\end{align*}
Consequently,
$$
\lim_{x\rightarrow\infty}\|W_{g,\psi}^*C_\varphi k_x\|^2=\infty.
$$
Thus $W_{g,\psi}^*C_\varphi$ has no bounded extension,
although $W_{g,\psi}C_\varphi^*$ is unitary.
\end{exam}\vspace{3mm}

\subsection*{Acknowledgment}
This work was supported by the National Natural Science Foundation of China
(12371125)  and  Chongqing Natural Science Foundation (CSTB2024NSCQ-MSX0177).


\begin{thebibliography}{99}

\bibitem{TC}
T. Carroll, C. Gilmore, Weighted composition operators on Fock spaces and their dynamics, \emph{J. Math. Anal. Appl.}, 2021, 502: Paper No. 125234.

\bibitem{JC}
J. H. Clifford, The product of a composition operator with the adjoint of a composition operator, Ph.D. Thesis, Michigan State University, 1998.

\bibitem{CD1}
J. H. Clifford, D. Zheng, Composition operators on the Hardy space, \emph{Indiana Univ. Math. J.}, 1999, 48: 1585--1616.

\bibitem{CD}
J. H. Clifford, D. Zheng, Composition operators on Bergman spaces, \emph{Chinese Ann. Math. Ser. B}, 2003, 24: 433--448.

\bibitem{CL}
J. H. Clifford, T. Le, A. Wiggins, Invertible composition operators: the product of a composition operator with the adjoint of a composition operator, \emph{Complex Anal. Oper. Theory}, 2014, 8: 1699--1706.

\bibitem{CM}
J. H. Clifford, M. Dabkowski, A. Wiggins, Y. Zeytuncu, The operator $ C_ {\phi}^* C_ {\psi} $ when $\phi $ is a Blaschke product, \emph{Adv. Oper. Theory}, 2026, 11: Paper No. 3.

\bibitem{DZ}
X. Dong, K. Zhu, Canonical integral operators on the Fock space, \emph{Math. Z.}, 2024, 306: Paper No. 64.

\bibitem{PVH}
P. V. Hai, Unbounded weighted composition operators on Fock space,  \emph{Potential Anal.}, 2020, 53: 1--21. 

\bibitem{HR-MittagLeffler}
P. V. Hai, J. A. Rosenfeld, Weighted composition operators on the Mittag-Leffler spaces of entire functions, \emph{Complex Anal. Oper. Theory}, 2021, 15: Paper No. 17.

\bibitem{HT-multivariable}
P. V. Hai, P. T. Tien, Complex symmetry in the Fock space of several variables, \emph{Linear Algebra Appl.}, 2024, 695: 49--74.

\bibitem{SH}
M. Haji Shaabani, Fredholmness of multiplication of a weighted composition operator with its adjoint on $H^2$ and $A_\alpha^2$, \emph{J. Inequal. Appl.}, 2018, 2018: 23.

\bibitem{SH2}
M. Haji Shaabani, Closed range weighted composition operators on $H^2$ and $A_\alpha^2$, \emph{Iran. J. Sci. Technol. Trans. A Sci.}, 2019, 43: 1167--1170.

\bibitem{HW}
K. Han, M. Wang, Weighted composition operators on the Fock space, \emph{Sci. China Math.}, 2022, 65: 111--126.

\bibitem{LT}
T. Le, Normal and isometric weighted composition operators on the Fock space, \emph{Bull. Lond. Math. Soc.}, 2014, 46: 847--856.

\bibitem{LZ}
Q. Leng, X. Zhao, Schatten class membership of products of composition operators and their adjoints, arXiv: 2610.15472.




\bibitem{MG-generalized-Fock}
A. Malhotra, A. Gupta, Complex symmetry of generalized weighted composition operators on Fock space, \emph{J. Math. Anal. Appl.}, 2021, 495:  Paper No. 124740.

\bibitem{MT1}
T. Mengestie, Schatten class weighted composition operators on weighted Fock spaces, \emph{Arch. Math. (Basel)}, 2013, 101: 349--360.

\bibitem{MT2}
T. Mengestie, On trace ideal weighted composition operators on weighted Fock spaces, \emph{Arch. Math. (Basel)}, 2015, 105: 453--459.

\bibitem{MT3}
T. Mengestie, Closed range weighted composition operators and dynamical sampling, \emph{J. Math. Anal. Appl.}, 2022, 515: Paper No. 126387.



\bibitem{SS}
E. M. Stein, R. Shakarchi, \emph{Complex Analysis}, Princeton Lectures in Analysis, vol. 2, Princeton University Press, Princeton, NJ, 2003.

\bibitem{UK}
S. I. Ueki, Weighted composition operator on the Fock space, \emph{Proc. Amer. Math. Soc.}, 2007, 135: 1405--1410.

\bibitem{Z}
L. Zhao, Unitary weighted composition operators on the Fock space of $\C^n$, \emph{Complex Anal. Oper. Theory}, 2014, 8: 581--590.

\bibitem{ZK}
K. Zhu, \emph{Analysis on Fock Spaces}, Graduate Texts in Mathematics, vol. 263, Springer, New York, 2012.

\end{thebibliography}
\end{document}